\documentclass[table]{amsart}
\usepackage[margin=0.9in]{geometry}
\usepackage{amsmath,amssymb,amsthm,mathtools,mathrsfs, tikz}
\usepackage{enumitem}
\usepackage{pgfplots}
\pgfplotsset{compat=1.18}
\usepackage{xcolor}
\usepackage{hyperref}
\usepackage{orcidlink}

\definecolor{darkgoldenrod}{rgb}{0.72,0.53,0.04}
\definecolor{goldmetallic}{rgb}{0.83,0.69,0.22}

\hypersetup{
 colorlinks=true,
 linkcolor=darkgoldenrod,
 filecolor=brown,
 urlcolor=goldmetallic,
 citecolor=darkgoldenrod,
}

\numberwithin{equation}{section}

\newtheorem{theorem}{Theorem}[section]
\newtheorem{proposition}[theorem]{Proposition}
\newtheorem{lemma}[theorem]{Lemma}
\newtheorem{corollary}[theorem]{Corollary}
\newtheorem{assumption}[theorem]{Assumption}

\newtheorem{remark}[theorem]{Remark}
\theoremstyle{definition}
\newtheorem{definition}[theorem]{Definition}
\newtheorem{example}[theorem]{Example}

\newtheorem{lconj}{Conjecture}

\newcommand{\A}{\mathbb A}
\newcommand{\Gm}{\mathbb G_m}
\newcommand{\Z}{\mathbb Z}
\newcommand{\Q}{\mathbb Q}
\newcommand{\R}{\mathbb R}
\newcommand{\C}{\mathbb C}
\newcommand{\cB}{\mathcal B}
\newcommand{\cY}{\mathscr Y}
\newcommand{\vis}{\operatorname{vis}}
\newcommand{\bad}{\operatorname{bad}}
\newcommand{\GL}{\operatorname{GL}}

\title[Local-global visibility]{Local-global principles for visibility of lattice points on parameterized curves}
\author[S.~Chaubey]{Sneha Chaubey}
\address{Department of Mathematics,
Indraprastha Institute of Information Technology Delhi,
New Delhi 110020, India}
\email{sneha@iiitd.ac.in}

\author[A.~Ray]{Anwesh Ray\, \orcidlink{0000-0001-6946-1559}}
\address{Chennai Mathematical Institute, Chennai, India}
\email{anwesh@cmi.ac.in}

\subjclass[2020]{Primary 11H06; Secondary 11P21, 11N35, 11D45, 14G05}
\keywords{visible lattice points, local-global principles, $p$-adic visibility,
weighted homogeneous families, Euler products, polynomial lines of sight}

\begin{document}

\begin{abstract}
We develop a local-global theory for visibility of lattice points on families of parameterized curves.  We introduce a notion of \(p\)-adic visibility and ask whether a lattice point is globally visible precisely when it is visible at every prime.  We prove that this local-global principle holds for a broad class of families whose parametrizations are homogeneous with respect to positive weights.  When the points on these curves fill the entire positive integer lattice, we compute the local and global densities of visible points and show that the global density is the product of the local densities.  We then consider polynomial families of the form \(y=qP(x)\), with \(q\in\Q_{>0}\), and show that visibility can be detected prime by prime exactly when \(P\) is a monomial.  For non-monomial polynomials the local-global principle can fail, but the set of points where it fails has density zero; for separable polynomials we also obtain a quantitative bound for the number of non-visible points, improving the previously known bound. We further consider families whose lattice points lie on a proper lower-dimensional algebraic subset of the ambient space and show that their visibility densities can behave differently from those of the full lattice.  Finally, we extend the theory from visibility from the origin to visibility from one lattice point to another and show that the corresponding local-global principle continues to hold for weighted homogeneous families.
\end{abstract}
\maketitle

\setcounter{tocdepth}{1}
\tableofcontents
\section{Introduction}
\subsection{Motivation and background}

Local-global principles ask whether a global arithmetic property over \(\Q\)
can be detected over \(\R\) and over \(\Q_p\) for every prime \(p\). The
basic example is the Hasse--Minkowski theorem, which states that a quadratic form over \(\Q\) has
a nontrivial rational zero if and only if it has a nontrivial zero over \(\R\)
and over every \(\Q_p\); cf. \cite[p.~41, Chapter~IV]{Serre}. For
general varieties this implication can fail. The Brauer--Manin obstruction is
one systematic way to explain such failures, see
\cite{SkorobogatovTorsors} and \cite{PoonenRationalPoints} for further details. A similar local-global philosophy appears in strong approximation for
algebraic groups and in thin-orbit problems; see
\cite[Chapter~7]{PlatonovRapinchuk}.  For integral Apollonian circle packings, 
Graham--Lagarias--Mallows--Wilks--Yan \cite{GLMW} and Fuchs-Sanden \cite{FS} introduced the local-global conjecture. Haag--Kertzer--Rickards--Stange showed that this conjecture fails in general by producing primitive packings with missed admissible curvatures \cite{HaagKertzerRickardsStange}. The present paper studies a
visibility analogue: we ask when visibility of lattice points on parameterized
curves is detected prime by prime, and when the exceptional set has density
zero.

Given an integer $n\geq 2$, the classical problem of lattice-point visibility asks which points of
\(\Z^n\) can be seen from the origin along straight lines.  A point $a=(a_1,\dots,a_n)\in \Z^n$ is visible from the origin precisely when the line segment from the origin to \(a\)
contains no other lattice point.  Equivalently, $\gcd(a_1,\dots,a_n)=1$. Thus, classical visibility is already a local condition: \(a\) is visible if
and only if, for every prime \(p\), not all coordinates of \(a\) are divisible
by \(p\). Given $\mathcal{S}\subseteq \mathbb{Z}_{\geq 1}^n$, its natural density is defined as the following limit
\[\mathfrak{d}(\mathcal{S}):=\lim_{X\rightarrow \infty} \frac{|\mathcal{S}\cap [1,X]^n|}{X^n},\] provided that this limit exists. It is easy to see that the density of visible points in \(\Z_{\geq 1}^n\) is
\[
        \prod_p\left(1-\frac1{p^n}\right)
        =
        \frac1{\zeta(n)}.
\]

Classical questions about the distribution and
geometry of visible and invisible lattice points were studied by
Herzog--Stewart \cite{HerzogStewart1971}, and later refinements include
visibility in the plane \cite{AdhikariGranville2009}, index and joint
visibility \cite{ChaubeyTamazyanZaharescu2019}, and simultaneous visibility
from several prescribed points \cite{BerendKumarPollington2023}. The fine-scale distribution of visible lattice points has also been studied by
Boca--Cobeli-Zaharescu, who proved limiting results for the angular spacings
of lattice points visible from the origin \cite{BocaCobeliZaharescu2000}. More recently, visibility questions with congruence restrictions have been studied by
Shparlinski and Winterhof, who obtained estimates for visible points on
multidimensional modular hyperbolas \cite{ShparlinskiWinterhof2008}. Visibility has also been studied along nonlinear families of curves. Goins--Harris-Kubik-Mbirika \cite{GoinsHarrisKubikMbirika2018} introduced \(b\)-visibility from the origin: a
point \((r,s)\in\Z^2\) is visible along a generalized line of sight if it lies
on a curve $y=qx^b$ where $q\in\Q$ and no earlier lattice point lies on the same curve between the origin and
\((r,s)\). They showed that the density of such visible points is $\frac1{\zeta(b+1)}$. Harris and Omar \cite{HarrisOmar2018} extended this circle of questions to
power functions with rational exponents. Polynomial lines of sight provide a particularly important test case. In work of the first author with Pandey and Regavim \cite{ChaubeyPandeyRegavim2026}, the authors consider families $y=qP(x)$ where \(P\in\Z[x]\) has nonnegative coefficients and \(P(0)=0\).  For monomials \(P(x)=x^b\), the theory
reduces to weighted visibility and gives the density \(1/\zeta(b+1)\). However
, for genuinely non-monomial polynomials, they \cite{ChaubeyPandeyRegavim2026} prove that 
visible points have density one.

In this article, we work in a general framework. We consider parameterized curves contained within a real affine variety $X\subseteq \mathbb{R}^n$
defined over \(\Q\), and assume that \(0\in X\). All local visibility notions below are relative to this fixed affine embedding and its coordinate functions. We consider a collection of parameterized curves $\cY=\{(U_\alpha,\phi_\alpha)\}_{\alpha\in I}$. Here \(I\) is an index set, \(U_\alpha\subseteq\A^1_{\Q}\) is a Zariski open subset, and $\phi_\alpha:U_\alpha\longrightarrow X$ is a rational parametrization of a curve. We write $U_\alpha^+:=U_\alpha(\R)\cap \R_{>0}$. The order on each curve is the order of the parameter, i.e., the point
\(\phi_\alpha(s)\) precedes \(\phi_\alpha(t)\) if \(0<s<t\). We define the branch locus
\begin{equation}\label{cBY}\cB(\cY)
        :=
        \{\,\phi_\alpha(t)\in X(\Z)\mid \alpha\in I,\ t\in U_\alpha^+\,\}.
\end{equation} This is called a \emph{visibility datum} if each branch passes through the origin only at \(t=0\) and distinct branches do not meet away from the origin. We refer to Definition \ref{def:visibility-datum} for further details.

We say that a point in this branch locus is globally visible if there is no earlier
integral point on the same parameterized curve. Denote the set of globally visible points by $\cB^{\vis}(\cY)\subseteq \cB(\cY)$.

There is also a local version of visibility at each prime. For a point $P=(P_1,\dots,P_n)\in \Z^n$,
we define its \(p\)-adic minimum valuation by
\[
        v_p(P):=\min_i v_p(P_i).
\]
We say that a point in the branch locus is \(p\)-adically visible if no earlier integral
point on the same curve has strictly smaller \(p\)-adic minimum valuation. We denote the set of \(p\)-adically visible points by $\cB_p^{\vis}(\cY)\subseteq \cB(\cY)$.

Global visibility always implies \(p\)-adic visibility for every prime, and hence
\[
        \cB^{\vis}(\cY)
        \subseteq
        \bigcap_p \cB_p^{\vis}(\cY).
\]
The basic local-global question is whether the converse inclusion holds.  We
say that visibility is locally detectable for \(\cY\) if
\begin{equation}\label{eq:local-detectability} \cB^{\vis}(\cY)
        =
        \bigcap_p \cB_p^{\vis}(\cY).
\end{equation}
The possible failure of this
principle is measured by the local defect set
\begin{equation}\label{defect set}\cB^{\bad}(\cY)
        :=
        \left(\bigcap_p \cB_p^{\vis}(\cY)\right)
        \setminus \cB^{\vis}(\cY).
\end{equation}
Therefore, visibility is locally detectable precisely when $\cB^{\bad}(\cY)=\varnothing$.
\par For $P=(P_1,\dots,P_n)$, we define its height by
\begin{equation}\label{defn of H}H(P):=\max_{1\leq i\leq n}|P_i|.
\end{equation}
For \(X>0\), we then set
\[
        \cB(\cY;X)
        :=
        \{P\in\cB(\cY):H(P)\leq X\}.
\]
Given a subset \(\mathcal{S}\subseteq \cB(\cY)\), we define its relative density in the branch
locus by
\[
        \mathfrak d_{\cB(\cY)}(\mathcal{S})
        :=
        \lim_{X\to\infty}
        \frac{\#\bigl(\mathcal{S}\cap \cB(\cY;X)\bigr)}
             {\#\cB(\cY;X)},
\]
provided the limit exists. We say that \(\mathcal{S}\) has density zero relative to \(\cB(\cY)\) if $\mathfrak d_{\cB(\cY)}(\mathcal{S})=0$. We set \[\mathfrak{d}^{\vis}(\cY):=\mathfrak{d}_{\cB(\cY)}\left(\cB^{\vis}(\cY)\right)\quad \text{and}\quad \mathfrak{d}^{\bad}(\cY):=\mathfrak{d}_{\cB(\cY)}\left(\cB^{\bad}(\cY)\right),\]provided they exist.
The key examples studied in this paper show that $\mathfrak{d}^{\vis}(\cY)$ decomposes into a product of local terms that are interpreted as densities of $p$-adically visible points and that $\mathfrak{d}^{\bad}(\cY)=0$.

\subsection{Main results}
The first main result gives a geometric criterion for local
detectability.  Let \(\mathbb G_m\) denote the multiplicative group.  Given
positive integers \(w_1,\dots,w_n\), we let \(\mathbb G_m\) act on
\(\mathbb{R}^n\) by
\[
        d\cdot (x_1,\dots,x_n)
        =
        (d^{w_1}x_1,\dots,d^{w_n}x_n).
\]
We call this the positive-weight action with weights \(w_1,\dots,w_n\).

\begin{theorem}\label{thm:local-detectability}
Let \(\cY=\{(U_\alpha,\phi_\alpha)\}_{\alpha\in I}\) be a visibility datum in
\(X\subseteq \mathbb{R}^n\). Writing $\phi_\alpha
        =
        (\phi_{\alpha,1},\dots,\phi_{\alpha,n})$, assume that there are
positive integers \(w_1,\dots,w_n\) such that, for every \(\alpha\in I\), every
\(t\in U_\alpha\), and every \(d>0\) for which both sides are defined, one has
\[
        \phi_\alpha(dt)
        =
        d\cdot \phi_\alpha(t)
        =
        \bigl(d^{w_1}\phi_{\alpha,1}(t),\dots,
              d^{w_n}\phi_{\alpha,n}(t)\bigr).
\]
Then visibility is locally detectable for \(\cY\), that is, \eqref{eq:local-detectability} holds.
\end{theorem}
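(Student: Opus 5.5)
The inclusion \(\cB^{\vis}(\cY)\subseteq\bigcap_p\cB_p^{\vis}(\cY)\) is already noted before the statement, so it suffices to show the reverse. We argue by contraposition: let \(P=\phi_\alpha(t)\in\cB(\cY)\) be a point that is not globally visible, and produce a prime \(p\) at which \(P\) fails to be \(p\)-adically visible. By assumption there is an earlier integral point \(Q=\phi_\alpha(s)\in X(\Z)\) on the same branch, with \(0<s<t\) and \(s\in U_\alpha^+\).

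The key step is to use the weighted homogeneity to relate \(P\) and \(Q\) by the \(\mathbb G_m\)-action. Set \(d:=t/s>1\). Then
\[
P=\phi_\alpha(t)=\phi_\alpha(d s)=d\cdot\phi_\alpha(s)=d\cdot Q,
\]
so \(P_i=d^{w_i}Q_i\) for every \(i\). We first check that \(d\) is rational. Because \(Q\neq 0\) (the branch meets the origin only at parameter \(0\)), some coordinate \(Q_j\neq0\). Then \(d^{w_j}=P_j/Q_j\in\Q_{>0}\). Taking all indices \(i\) with \(Q_i\neq0\), we get \(d^{w_i}\in\Q\) for each of them. A B\'ezout combination of the \(w_i\) then gives \(d^{g}\in\Q\), where \(g\) is the gcd of those weights. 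This step does not by itself give \(d\in\Q\), and I expect it to be the main obstacle. I would handle it by working directly with \(p\)-adic valuations of the positive real algebraic number \(d\), rather than insisting that \(d\) be rational. Since \(d^{w_j}=P_j/Q_j\) is rational, the quantity \(v_p(d):=v_p(d^{w_j})/w_j\in\Q\) is well defined. It is independent of \(j\), because \(d^{w_i}=P_i/Q_i\) for every \(i\) with \(Q_i\neq 0\).

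Next we find a prime with \(v_p(d)>0\). Since \(d>1\) and \(d^{w_j}=P_j/Q_j\) is rational and \(>1\), some prime has a nonzero valuation on it, but that valuation need not be positive. I would instead use an alternative: the branch contains the points \(\phi_\alpha(d^k s)\), which suggests iterating. The cleaner route is to observe that we are free to choose \(Q\). Take \(Q\) to be an integral point on the branch with minimal parameter \(s_0\), which exists assuming integral points on a branch are discrete in the parameter. Compare \(P\) with this \(Q\). If \(v_p(d)\le0\) for every \(p\), then \(d^{w_j}\) would have nonpositive valuation at every prime, which would force \(d^{w_j}\) to be \(1/m\) for a positive integer \(m\), hence \(d^{w_j}\le 1\). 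This contradicts \(d>1\). Hence some prime \(p\) has \(v_p(d)>0\); minimality of \(s_0\) is not actually needed for this.

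With such a \(p\) fixed, we compute \(v_p(P_i)=w_i v_p(d)+v_p(Q_i)>v_p(Q_i)\) for each \(i\) with \(Q_i\neq0\). For \(i\) with \(Q_i=0\) we have \(P_i=0\), so those coordinates contribute \(\infty\) to both minima. Taking the minimum over \(i\) gives \(v_p(P)>v_p(Q)\). So \(Q\) is an earlier integral point on the same curve with strictly smaller \(p\)-adic minimum valuation, which means \(P\notin\cB_p^{\vis}(\cY)\). This proves the reverse inclusion. The only delicate point is the well-definedness and positivity of \(v_p(d)\) handled above.
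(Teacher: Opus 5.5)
Your proof is correct and follows essentially the paper's argument: you write $P=d\cdot Q$ with $d=t/s>1$, note that $d^{w_i}=P_i/Q_i\in\mathbb{Q}_{>0}$ on the support of $Q$, find a prime at which the scaling factor has positive valuation, and conclude $v_p(P)>v_p(Q)$. The only cosmetic difference is that the paper passes to the rational number $D=d^h$, with $h$ the gcd of the supported weights obtained via B\'ezout, and takes $p$ dividing the numerator of $D$, whereas you work with the rational-valued $v_p(d)=v_p(d^{w_j})/w_j$; the detour through a minimal parameter $s_0$ is unnecessary, as you yourself note.
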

\begin{remark}\label{remark1.2}The theorem applies to weighted homogeneous parametrizations
\begin{equation}\label{weightedfamilies} \phi_\alpha(t)
        =
        \bigl(a_1(\alpha)t^{w_1},\dots,
              a_n(\alpha)t^{w_n}\bigr),
\end{equation}
where \(w_i\in\Z_{>0}\) and \(a_i:I\to\Q_{>0}\). For example, taking $w_1=1$, $w_2=b\in\Z_{\geq 2}$, $I=\Q_{>0}$
with $a_1(\alpha)=1$ and $a_2(\alpha)=\alpha$ gives us the family $\phi_\alpha(t)=(t,\alpha t^b)$. The corresponding branches are the curves $y=\alpha x^b$ in the positive quadrant \(\R_{\geq 0}^2\).

The weighted-projective 
interpretation of these families and its relation to the
local-global principle for visibility is discussed in
 Section~\ref{subsec:weighted-projective}.
\end{remark}

In many natural examples, the branch locus is the entire positive
lattice \(\mathbb Z_{\geq1}^n\). In that case, the local criterion
also gives an explicit density formula.

\begin{theorem}\label{thm:full-lattice-density}
Let \(n\geq 2\), and let \(\cY\) be a weighted homogenous family as in \eqref{weightedfamilies} whose branches are
defined for all \(t>0\). Set $W=w_1+\cdots+w_n$; assume that 
\begin{enumerate}
    \item[(i)] $\gcd(w_1,\dots,w_n)=1$, and
    \item[(ii)]$\cB(\cY)=\Z_{\geq 1}^n$.
\end{enumerate}
 Then, for every prime \(p\), the relative density of \(p\)-adically visible points is
\[
        \mathfrak d_p^{\operatorname{vis}}(\cY)
        =
        1-\frac1{p^W}.
\]
Moreover, the relative density of globally visible points equals
\[
        \mathfrak d^{\operatorname{vis}}(\cY)=
        \frac1{\zeta(W)}.
\]
\end{theorem}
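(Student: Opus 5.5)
We first make visibility explicit for the family \eqref{weightedfamilies}. Fix a branch \(\alpha\) and a lattice point \(P=\phi_\alpha(t)\in\Z_{\geq1}^n\). By the weighted homogeneity, any other point on the same branch is \(\phi_\alpha(dt)=d\cdot P=(d^{w_1}P_1,\dots,d^{w_n}P_n)\) with \(d>0\). Earlier points correspond to \(0<d<1\). The first step is to show that when \(\gcd(w_1,\dots,w_n)=1\), the condition \(d\cdot P\in\Z^n\) with \(P\in\Z^n\) forces \(d\in\Q\). Indeed \(d^{w_i}\in\Q\) for all \(i\), and a B\'ezout combination \(\sum c_iw_i=1\) gives \(d=\prod (d^{w_i})^{c_i}\in\Q\). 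Writing \(d=r/s\) in lowest terms, \(d\cdot P\) is integral iff \(s^{w_i}\mid P_i\) for all \(i\). Hence \(P\) is globally non-visible iff there exist an integer \(s>1\) with \(s^{w_i}\mid P_i\) for all \(i\) (take \(d=1/s\)). Equivalently, this happens iff some prime \(p\) satisfies \(p^{w_i}\mid P_i\) for all \(i\). The same analysis at a single prime shows that \(P\) is \(p\)-adically non-visible iff \(p^{w_i}\mid P_i\) for all \(i\). For the forward direction, take \(d=1/p\). For the converse, an earlier point \(d\cdot P\) with \(d=r/s<1\) and smaller minimal valuation must have \(v_p(d)<0\), so \(p\mid s\), whence \(p^{w_i}\mid s^{w_i}\mid P_i\). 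This recovers Theorem~\ref{thm:local-detectability} in this case and gives a concrete congruence description.

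Since \(\cB(\cY)=\Z_{\geq1}^n\) and \(H(P)\le X\) means \(P\in[1,X]^n\), the relative density is ordinary natural density. The \(p\)-adically non-visible points form the union of residue classes \(\{P: P_i\equiv0 \bmod p^{w_i}\ \forall i\}\). This union is periodic modulo \(\prod p^{w_i}\) and has density \(\prod_i p^{-w_i}=p^{-W}\). Thus \(\mathfrak d_p^{\vis}=1-p^{-W}\), with an error \(O(X^{n-1}p^{\max w_i})\) in the counting function, which is negligible for fixed \(p\).

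For the global density, we use M\"obius inversion. The number of visible points in \([1,X]^n\) equals
\[
\sum_{P\in[1,X]^n}\ \sum_{s:\ s^{w_i}\mid P_i\,\forall i}\mu(s)=\sum_{s\ge1}\mu(s)\prod_{i=1}^n\Bigl\lfloor \frac{X}{s^{w_i}}\Bigr\rfloor,
\]
where the inner sum detects that the largest \(s\) with \(s^{w_i}\mid P_i\) for all \(i\) equals \(1\). The set of such \(s\) is closed under lcm, hence consists of the divisors of a maximal one. The main obstacle is controlling the tail uniformly, and we handle it as follows. A term with \(s^{w_j}>X\) for some \(j\) vanishes, so only \(s\le X^{1/\max w_i}\le X\) contribute. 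Replacing each floor by \(X/s^{w_i}+O(1)\) gives the main term
\[
X^n\sum_{s\le X}\mu(s)s^{-W}=X^n\bigl(1/\zeta(W)+O(X^{1-W})\bigr),
\]
which uses \(W\ge n\ge2\). The error terms are bounded by \(\sum_{s\le X}\sum_{\emptyset\ne J}\prod_{i\notin J}X/s^{w_i}\ll \sum_{s\le X}X^{n-1}/s^{\,W-\max w_i}+X\). Here \(W-\max_i w_i\ge n-1\ge1\), so this is \(O(X^{n-1}\log X)=o(X^n)\). Dividing by \(X^n\) gives \(\mathfrak d^{\vis}(\cY)=1/\zeta(W)=\prod_p(1-p^{-W})\), the product of the local densities.
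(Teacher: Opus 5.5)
Your proof is correct, and its local half is the paper's argument: B\'ezout forces the scaling factor into $\Q$, and the sign of its $p$-adic valuation then yields $p^{w_i}\mid x_i$ for all $i$.

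For the global density you take a different route.
\begin{itemize}
\item The paper imports local detectability from Theorem~\ref{thm:local-detectability} and feeds the congruence description into the sieve-limit Theorem~\ref{thm:density}. The Chinese remainder theorem gives density $\prod_{p\le Y}(1-p^{-W})$ for finitely many primes, a union bound controls the tail by $\sum_{p>Y}p^{-W}$, and one lets $X\to\infty$ and then $Y\to\infty$.
\item You instead prove the global criterion directly. You observe that the admissible $s$ are exactly the divisors of a weighted gcd, and then apply M\"obius inversion over $s$.
\end{itemize}

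What each buys:
\begin{itemize}
\item Your version is self-contained and quantitative. It gives $\#\cB^{\vis}(\cY;X)=X^n/\zeta(W)+O(X^{n-1}\log X)$, which the limit argument does not provide.
\item The paper's framework is modular. It needs only a congruence description of each $U_p$ and a tail estimate. It does not require the obstructions to be indexed by a divisor-closed set of integers, which is exactly what makes M\"obius inversion work. The paper itself switches to your M\"obius approach for pair visibility in Section~\ref{sec:random-pairs}.
\end{itemize}

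One small bookkeeping point. The termwise bound $\prod_{i\notin J}X/s^{w_i}\le X^{n-1}/s^{W-\max_i w_i}$ holds for $|J|=1$, and for every $J$ once $s\le X^{1/\max_i w_i}$. It can fail for $|J|\ge2$ with larger $s$: for example, $w=(1,5,5)$, $J=\{2,3\}$, $s=X^{1/2}$. There are two easy fixes:
\begin{itemize}
\item restrict the whole expansion to $s\le X^{1/\max_i w_i}$, where all the nonzero terms live; or
\item bound those terms trivially by $\sum_{s\le X}X^{n-|J|}\le X^{n-1}$.
\end{itemize}
Either way, your final $O(X^{n-1}\log X)$ is unaffected.
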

\begin{remark}
    It follows from the above Theorem that the global density equals the product of local densities, that is, 
\[\mathfrak d^{\operatorname{vis}}(\cY)=\prod_p \mathfrak d_p^{\operatorname{vis}}(\cY).\]
\end{remark}

We also study nonhomogeneous polynomial families. Let
\[
P(T)=T^d+a_{d-1}T^{d-1}+\cdots+a_1T\in\Z[T],
\]
where $a_i\geq 0$ and consider the polynomial visibility datum
\[
        \cY_P:=\{(\A^1,\phi_q):q\in\Q_{>0}\},
\]
where $\phi_q(t)=(t,qP(t))$. For a fixed branch \(q=A/B\), the integral points on that branch are governed
by the congruence condition $B\mid P(x)$.

In the monomial case \(P(x)=x^d\), the parametrization
\[
        \phi_q(t)=(t,qt^d)
\]
is homogeneous for the action with weights \((1,d)\), and hence Theorem
\ref{thm:local-detectability} applies.  The following
result shows that among the polynomial families
\(y=qP(x)\), the local-global principle for visibility holds
only in the monomial case.

\begin{theorem}\label{thm:polynomial-local-global}
Let
\[
        P(T)=T^d+a_{d-1}T^{d-1}+\cdots+a_1T\in\Z[T],
\]
with $a_i\geq 0$ for $i=1, \dots, d-1$. Consider the family
\[
        \phi_q(t)=(t,qP(t)),
\]
where $q\in\Q_{>0}$. Then visibility is locally detectable for this family if and only if $P(x)=x^d$.
\end{theorem}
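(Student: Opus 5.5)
The ``if'' direction comes from Theorem~\ref{thm:local-detectability}. For the ``only if'' direction I will try to construct a single branch carrying an integral point that is $p$-adically visible for every prime $p$ but not globally visible.

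\textbf{The monomial case.} If $P(x)=x^d$, then $\phi_q(t)=(t,qt^d)$ satisfies $\phi_q(ct)=(ct,c^dqt^d)$, so the hypothesis of Theorem~\ref{thm:local-detectability} holds with weights $(1,d)$. We must also check that $\cY_P$ is a visibility datum. Each branch meets the origin only at $t=0$. Since $P(t)>0$ for $t>0$, two branches $q\neq q'$ have different second coordinates at every $t>0$, so they never meet away from the origin. Theorem~\ref{thm:local-detectability} then gives local detectability.

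\textbf{Reduction for non-monomial $P$.} Write $P(T)=T^kR(T)$ with $1\le k<d$, $R(0)=a_k>0$ and $\deg R=d-k\ge 1$. Fix the branch $q=1/B$. Its integral points are $(x,P(x)/B)$ for the integers $x\ge1$ with $B\mid P(x)$, ordered by $x$.

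The key observation is a sufficient condition for a defect. Suppose the branch has integral points at $x_1<x_2$ and the later point $(x_2,P(x_2)/B)$ is primitive, i.e.\ $\gcd\bigl(x_2,P(x_2)/B\bigr)=1$.
\begin{itemize}
\item Its $p$-adic minimum valuation is $0$ for every $p$, and no earlier point can have strictly smaller valuation. So it lies in $\bigcap_p\cB_p^{\vis}(\cY_P)$.
\item The point at $x_1$ precedes it on the same branch, so it is not globally visible.
\end{itemize}
Hence it lies in $\cB^{\bad}(\cY_P)$, and it suffices to produce such a configuration.

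\textbf{Construction for $k=1$.} Primitivity of the later point forces every prime $\ell\mid x_2$ to satisfy $v_\ell(B)=v_\ell(P(x_2))$, because $\ell\mid x_2$ already gives $\ell\mid P(x_2)$. This suggests taking $x_2=p$ prime and $B=p^k$. Then $P(p)/B=R(p)\equiv a_k\not\equiv 0\pmod p$ once $p\nmid a_k$, so the point $(p,R(p))$ is primitive.

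Since $R$ is nonconstant, infinitely many primes $p$ divide some value $R(u)$ (Schur's theorem). For such $p$, let $r\in[1,p)$ be the least positive residue with $R(r)\equiv0\pmod p$; we may take $r\ge1$ because $R(0)=a_k\not\equiv0$ for $p$ large. When $k=1$, $B=p$ and $p\mid P(r)=rR(r)$, so $x_1=r<p=x_2$ completes the construction.

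\textbf{The main obstacle: $k\ge 2$.} Now we need an earlier $x_1<p$ with $p^k\mid P(x_1)$. Since $p\nmid x_1$, this means $p^k\mid R(x_1)$ with $x_1<p$. A Hensel lift of a simple root $r$ modulo $p^k$ is typically of size about $p^k$, not below $p$.

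My first attempt will be to relax primitivity. I would allow the later point to have positive valuation at $p$, taking $B=p^{j}$ with $1\le j\le k$, and directly compare $\min\bigl(v_p(x),v_p(P(x)/B)\bigr)$ along the branch. Failing that, I would let $x_2$ be a product of several large primes, or choose $q=A/B$ with $A>1$ to absorb unwanted valuations. The goal is to keep $x_1$ below $x_2$ while satisfying the valuation inequality at every prime.
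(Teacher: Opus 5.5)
Your monomial case is fine, and your $k=1$ construction is correct. On the branch $q=1/p$, the point $(r,rR(r)/p)$, where $1\le r<p$ is a root of $R$ modulo $p$, precedes the primitive point $(p,R(p))$. But for $k\ge2$ the proposal gives no construction, only a list of things to try. So the ``only if'' direction is not proved in general; already $P(T)=T^2(T+1)$ is left open.

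Moreover, the choice $x_2=p$ prime is a dead end there. Your own valuation observation forces $v_p(B)=k$ when $p\nmid a_k$. Any earlier point then needs $1\le x_1<p$ with $p^k\mid R(x_1)$. For $R(T)=T+1$ and $k=2$ this means $p^2\mid x_1+1\le p$, which is impossible. Taking $q=A/B$ with $A>1$ does not help either, since integrality on the branch is still $B\mid P(x)$.

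The missing idea is to make the later parameter a multiple of the earlier one, not a prime. Your observation that $v_\ell(B)=v_\ell(P(x_2))$ for every $\ell\mid x_2$ already suggests $B=x_2^k$. The paper takes $x_1=c$, $x_2=c\ell$ and the branch $q=1/(c\ell)^k$.

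\begin{itemize}
\item The later point is $(c\ell,R(c\ell))$. It is primitive once $\ell\nmid a_k$ and $\gcd(c,a_k)=1$, because $R(c\ell)\equiv a_k\pmod p$ for every prime $p\mid c\ell$.
\item The earlier point is $(c,R(c)/\ell^k)$, which is integral exactly when $\ell^k\mid R(c)$.
\end{itemize}

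The size obstruction you identified disappears, because $c<c\ell$ holds automatically and $c$ may be as large as $\ell^k$. To produce $c$:
\begin{itemize}
\item take an irreducible factor $H$ of $R$;
\item by Schur's theorem, choose a prime $\ell\nmid a_k\operatorname{disc}(H)$ dividing some value of $H$;
\item Hensel-lift the resulting simple root modulo $\ell$ to a root $c_0$ of $R$ modulo $\ell^k$;
\item use the Chinese remainder theorem to pick $c\equiv c_0\pmod{\ell^k}$ with $c\equiv1$ modulo every prime dividing $a_k$.
\end{itemize}
This works uniformly for all $k\ge1$. Your prime-based argument for $k=1$ is a slightly more economical variant of this construction.
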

By the aforementioned result in
\cite{ChaubeyPandeyRegavim2026}, the non-visible locus has density zero for
non-monomial \(P\). Consequently, $\mathfrak{d}^{\vis}(\cY_P)=1$. Moreover, since
\[
\cB^{\bad}(\cY_P)
\subseteq
\cB(\cY_P)\setminus\cB^{\vis}(\cY_P),
\]
the defect set also has density zero (cf. Theorem \ref{thm:polynomial-defect-density}).

\par In Section \ref{s 6}, we also obtain quantitative results for polynomial families for which
visibility is not locally detectable. Write \(U(P;X)\) for the set of
invisible points of $\cY_P$ lying in \([1,X]^2\). It is shown in \cite{ChaubeyPandeyRegavim2026} that for separable \(P\) of degree \(d\),
the upper bound
\(O_{P,\varepsilon}(X^{2-1/(2d-1)+\varepsilon})\) holds.
We obtain the following estimate, which is stronger for every
degree \(d\geq3\).

\begin{theorem}\label{thm:intro-separable-polynomial}
Let \(P\) be as above and suppose that \(P\) is separable of degree
\(d\geq2\).  Then, for every \(\varepsilon>0\),
\[
        \#U(P;X)
        \ll_{P,\varepsilon}
        X^{\frac32+\frac1{2d}+\varepsilon}.
\]
In particular, the proportion of invisible points in \([1,X]^2\) is
\(O_{P,\varepsilon}
(X^{-1/2+1/(2d)+\varepsilon})\).\end{theorem}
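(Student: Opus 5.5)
The plan is to make invisibility an explicit divisibility condition and then split the count at a height threshold \(R\) in the \(x\)-coordinate. Take \((r,s)\in[1,X]^2\). It lies on the branch \(q=s/P(r)\). Write \(q=A/B\) in lowest terms, so that \(B=P(r)/\gcd(s,P(r))\). Since \(\gcd(A,B)=1\), a point \((x,qP(x))\) with \(1\le x<r\) is integral exactly when \(B\mid P(x)\). Hence
\[
(r,s)\in U(P;X)\iff \exists\,1\le x<r:\ B\mid P(x),\qquad B=\frac{P(r)}{\gcd(s,P(r))}.
\]
Two consequences follow. First, \(B\ge P(r)/s\ge P(r)/X\gg r^d/X\). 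Second, \(B\le P(x)<P(r)\).

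\emph{Trivial range.} The points with \(r\le R\) contribute at most \(RX\). I will take \(R\asymp X^{1/2+1/(2d)}\), which makes this term \(X^{3/2+1/(2d)}\).

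\emph{Range \(r>R\).} Here I count triples \((x,r,A)\) via the modulus \(B\). Separability is used as follows. After discarding the finitely many primes dividing \(\operatorname{disc}(P)\) (they only cost \(O_P(1)\) factors), the number of roots of \(P\) modulo \(B\) is \(\ll C^{\omega(B)}\ll B^{\varepsilon}\). Together with the divisor bound \(\tau(P(x))\ll X^{\varepsilon}\), this lets me fix \(x\) and then a divisor \(B\mid P(x)\) with \(B\gg r^d/X\). Now \(r\) must be one of \(\ll X^{\varepsilon}\) residue classes modulo \(B\) and satisfy \(x<r\le X\). For each such \(r\), the number of admissible \(A\) (equivalently \(s=AP(r)/B\le X\)) is \(\le XB/P(r)\).

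I would organise the count dyadically in \(r\sim T\) and \(B\sim D\), with \(T^d/X\ll D\ll T^d\). For each dyadic box I would compare two ways of counting and take the minimum:
\begin{enumerate}
\item[(a)] Fix \(r\) and \(B\mid P(r)\), count \(x<r\) in the root classes modulo \(B\), namely \(\ll X^\varepsilon(T/D+1)\) of them, and multiply by the number \(XD/T^d\) of choices of \(s\).
\item[(b)] Fix \(x\) and \(B\mid P(x)\), and count \(r\) in progressions modulo \(B\) in the dyadic window.
\end{enumerate}
Balancing these against the trivial bound \(RX\) should give \(X^{3/2+1/(2d)+\varepsilon}\), and dividing by \(X^2\) gives the stated proportion.

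The main obstacle is the regime \(D>T\). There \(x\) and \(r\) are two distinct roots of \(P\) modulo \(B\) lying in the initial segment \([1,B)\), so the ``\(+1\)'' term in (a) dominates and naive summation only returns \(X^2\). To control this regime I would use the identity \(P(r)-P(x)=(r-x)Q(r,x)\), so that \(B\mid (r-x)Q(r,x)\), and combine it with the large lower bound \(B\gg T^d/X\). The goal is a divisor-type bound showing that few pairs \((x,r)\) with \(x<r\sim T\) share a modulus this large. Concretely, I would fix the difference \(h=r-x\) and note that \(B/\gcd(B,h)\) divides \(Q(x+h,x)\), a polynomial of degree \(d-1\) in \(x\). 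Large common divisors of \(P(x)\) and \(Q(x+h,x)\) are then bounded via their resultant in \(x\), which is a nonzero polynomial in \(h\) by separability. I expect this step to be where the exponent \(\tfrac12+\tfrac1{2d}\) is actually determined.
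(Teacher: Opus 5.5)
Your reduction is correct: $(r,s)\in U(P;X)$ if and only if $B\mid P(x)$ for some $x<r$, and this is equivalent to Lemma~\ref{lem:polynomial-gcd-reduction}. The trivial range $r\le R$ is harmless, and your count (a) is essentially the paper's middle-range estimate. After removing the factor $X$, common divisors of size at most $G$ contribute $\ll GT^{1-d+\varepsilon}$ per dyadic block $r\sim T$.

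The problem is that your argument stops exactly where the work is. You correctly note that once $B$ ranges up to $T^d$, (a) and (b) give $\ll X^{1+\varepsilon}T$ per block, hence only $X^2$ overall. The device you propose for this regime cannot fix that. For fixed $h=r-x$, the resultant $R(h)=\operatorname{Res}_x(P(x),Q(x+h,x))$ has degree $d(d-1)$ in $h$. So $B\mid hR(h)$ only gives $B\ll h^{d^2-d+1}$. This beats the trivial bound $B<P(r)\asymp T^d$ only when $h\ll T^{d/(d^2-d+1)}$, a negligible part of the range. For typical differences $h\asymp T$ it gives no information. Pairs $x<r\sim T$ with a common divisor close to $P(r)$ therefore remain uncontrolled, and these are exactly the terms of weight $\asymp X$ that produce $X^2$. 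In addition, $R(h)$ is a nonzero polynomial but it vanishes at integers $h$ that are differences of roots of $P$. For example, for $P(t)=t^2+t$ one has $\gcd(P(x),P(x+1))\ge x+1$.

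The missing idea is to parametrize the large common divisors by their cofactors. If $B>G$ divides both $P(r)$ and $P(x)$, put $e=P(r)/B$ and $k=P(x)/B$. Then:
\begin{itemize}
\item $1\le k<e<P(r)/G$;
\item your weight $XB/P(r)$ equals $X/e$;
\item $(x,r)$ is an integer point on the curve $eP(U)=kP(V)$.
\end{itemize}
You then need a bound, uniform in $(e,k)$, for the number of such points with coordinates at most $2T$. The paper proceeds as follows.
\begin{itemize}
\item For every ratio $k/e$ outside the finite set $\mathscr E_P$, the curve is absolutely irreducible by Pakovich's critical-value criterion (Proposition~\ref{prop:generic-ratio-irreducibility}); separability is used here. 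Bombieri--Pila then gives $T^{1/d+\varepsilon}$ points.
\item For the finitely many exceptional ratios, linear components are excluded because an affine $L$ with $P\circ L=cP$ permutes the roots and forces $c^m=1$ (Lemma~\ref{lem:no-linear-ratio-component}). This gives $T^{1/2+\varepsilon}$ points.
\end{itemize}
Summing with weight $1/e$ gives $\ll T^{d+1/d+\varepsilon}/G+T^{1/2+\varepsilon}$. Balancing against $GT^{1-d+\varepsilon}$ at $G=T^{d-1/2+1/(2d)}$ yields $T^{1/2+1/(2d)+\varepsilon}$ per block, and multiplying by $X$ and summing over dyadic blocks gives the theorem.

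With only the trivial count of at most $2T$ points per curve, the same balancing gives no saving at all. A genuine Diophantine estimate on these ratio curves is therefore indispensable, and nothing in your proposal supplies it.
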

\par We also study the quantitative size of the local defect set itself. For a
polynomial \(P\), write
\[
        D_P(X)
        :=
        \#\bigl(
        \cB^{\bad}(\cY_P)\cap[1,X]^2
        \bigr).
\]
\begin{theorem}\label{thm:intro-quantitative-defect}
Assume that $P(T)\in \Z[T]$ is a monic polynomial of degree $d\geq 2$ with non-negative coefficients and that $P(T)\neq T^d$. Then we have the general lower bound
\[D_P(X)\gg_P X.\]
Furthermore, if \(P(T)=T^r(m+nT)\), where \(r\geq2\),
\(m,n\geq1\), and \(\gcd(m,n)=1\), then, for every
\(\varepsilon>0\),
\[
        X\log X
        \ll_P
        D_P(X)
        \ll_{P,\varepsilon}
        X^{7/4+\varepsilon}.
\]
If \(r=2\), so that \(P(T)=T^2(m+nT)\) is a nonseparable cubic, then
the upper bound improves to
\(D_P(X)\ll_{P,\varepsilon}X^{5/3+\varepsilon}\).
\end{theorem}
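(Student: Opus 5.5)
The key observation for the lower bounds is that the local conditions are vacuous at every prime where the point under test has minimum valuation \(0\): if \(\min(v_p(x),v_p(y))=0\), no earlier point can have strictly smaller minimum valuation. So a point \((x,y)\) with \(\gcd(x,y)=1\) that is globally invisible automatically lies in \(\cB^{\bad}(\cY_P)\). More generally, a point is bad as soon as it is invisible and, at each of the finitely many primes dividing \(\gcd(x,y)\), every earlier point on its branch has minimum valuation at least \(\min(v_p(x),v_p(y))\).

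I will use the following branch description. Put \(q=A/B\) in lowest terms, so \(y=AP(x)/B\) with \(B\mid P(x)\). The integral points of the branch are the \(x'\) with \(B\mid P(x')\), and \((x,y)\) is invisible exactly when some \(x'<x\) also satisfies \(B\mid P(x')\).

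\textbf{Lower bound \(D_P(X)\gg_P X\).}
\begin{itemize}
\item Since \(P\neq T^d\), write \(P=T^kQ\) with \(Q(0)\neq 0\) and \(\deg Q\geq1\).
\item Fix once and for all a pair \(x_1<x_0\) and a modulus \(B\) with \(B\mid P(x_1)\) and \(B\mid P(x_0)\), chosen so that the \(x_0\)-adic part of \(P(x_0)\) is absorbed by \(B\). Concretely, I want \(v_p(P(x_0)/B)=0\) for every \(p\mid x_0\), together with \(v_p(x_1)\) or \(v_p(AP(x_1)/B)\) large enough at those primes.
\item I expect to produce such data from a suitable prime \(p\mid Q(x_1)\) by a Hensel/CRT choice of \(x_0\equiv x_1 \pmod{\text{high power of } p}\). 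The non-monomiality of \(P\) is exactly what makes \(Q(x_1)\) carry primes not dividing \(x_1\).
\item Then let \(A\) range over integers with \(\gcd(A,Bx_0)=1\) and \(AP(x_0)/B\leq X\). This gives \(\gg X\) points \((x_0,y)\).
\item Each such point is invisible because of \(x_1\). At primes \(p\nmid\gcd(x_0,y)\) the local condition is vacuous. At the remaining primes, which all divide \(x_0\), the local condition is forced by the choice of \(B\) and \(x_1\).
\end{itemize}

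\textbf{Upper bounds for \(P(T)=T^r(m+nT)\).} Here I simply bound \(D_P(X)\) by the number of invisible points, i.e. by the number of triples \((B,x',x)\) with \(x'<x\leq X\), \(B\mid P(x')\) and \(B\mid P(x)\), each weighted by the number \(\ll X B/P(x)\) of admissible \(A\).
\begin{itemize}
\item I split \(B\) via the factorisation \(B=B_1B_2\), with \(B_1\mid x^r\) and \(B_2\mid (m+nx)\). This is possible up to the bounded factor \(\gcd(x,m+nx)\mid m\).
\item For \(B\leq X^{\theta}\), I count trivially.
\item For large \(B\), the common divisibility forces \(x'\equiv x\) modulo a large part of \(B_2\), and forces \(x'\) to be divisible by \(\operatorname{rad}\)-controlled powers of \(B_1\).
\item The number of such pairs is then bounded using the divisor bound \(\tau(n)\ll n^{\varepsilon}\) together with the sparsity of \(r\)-full parts.
\item Optimising \(\theta\) should give \(X^{7/4+\varepsilon}\). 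For \(r=2\) the square-full structure gives a better count, which I expect yields \(X^{5/3+\varepsilon}\).
\end{itemize}

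\textbf{Lower bound \(X\log X\).} I would refine the construction from the general lower bound by letting \(x_0\) vary rather than fixing it. I take \(x_1=1\)-type witnesses, or \(x_1\) built from the factor \(m+nx\), and sum over \(x_0\leq X\) the number of coprime admissible \(A\), which is \(\asymp X/x_0\). This produces the harmonic sum \(\sum_{x_0\leq X} X/x_0\asymp X\log X\).

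The main obstacle is the explicit construction in the general lower bound: arranging a single branch with two points \(x_1<x_0\) such that every prime of \(\gcd(x_0,y)\) is dominated by the earlier point. I would first try \(x_0\) prime and \(B=x_0^{k}\cdot c\), requiring \(x_0^k\mid Q(x_1)\). If \(\deg Q<k\) makes this impossible, I would instead try a composite \(x_0\) sharing prime factors with \(x_1\) to high order.
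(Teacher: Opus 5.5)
Your lower-bound reduction matches the paper's. A primitive globally invisible point lies in $\cB^{\bad}(\cY_P)$, and passing to the branches $Aq$ with $\gcd(A,x_0)=1$ gives $\gg X$ defect points (Lemma~\ref{lem:scale-primitive-defect}). But the existence of one suitable triple $(x_1,x_0,B)$ is the whole content, you leave it open, and the routes you sketch do not reach it.

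\textbf{The missing construction.} Absorption at the primes of $x_0$ forces $x_0^k\mid B\mid P(x_1)=x_1^kQ(x_1)$. So a congruence $x_0\equiv x_1$ modulo a power of a prime $p\mid Q(x_1)$ with $p\nmid x_1$ is beside the point, since it gives $p\nmid x_0$. Taking $x_0$ prime needs $x_0^k\mid Q(x_1)$ with $x_1<x_0$, which is impossible for large $x_0$ once $k>\deg Q$ (e.g.\ $P=T^2(1+T)$). The missing idea is to take $x_0=\ell x_1$:
\begin{enumerate}
\item By Schur's theorem, pick a prime $\ell\nmid Q(0)$ at which an irreducible factor of $Q$ has a simple root.
\item Hensel-lift to get $\ell^k\mid Q(c)$, and use CRT to force $\gcd(c,Q(0))=1$.
\item Put $x_1=c$, $x_0=\ell c$, $B=x_0^k$. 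Then $P(c)/B=Q(c)/\ell^k\in\Z$, and $\gcd(\ell c,Q(\ell c))=1$ because $Q(\ell c)\equiv Q(0)\pmod p$ for every $p\mid\ell c$.
\end{enumerate}

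\textbf{The $X\log X$ bound has the same hole.} With $x_1=1$ one has $B\mid P(1)$ bounded, so there are only $\ll X/x_0^{r+1}$ admissible $A$ and the sum over $x_0$ is $O(X)$. A harmonic sum needs $P(x_0)/B\asymp x_0$, i.e.\ $B=x_0^r$ up to a bounded factor. That is exactly the family $x_0=\ell c$ with $c$ in a fixed class modulo $\ell^r\prod_{p\mid m}p$, chosen so that $\ell^r\mid m+nc$ and $\gcd(c,m\ell)=1$. This gives $X\sum\varphi(c)/c^2\asymp X\log X$ by Lemma~\ref{lem:defect-totient-progression}.

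\textbf{The upper bound is the more serious gap.} Nothing in your sketch produces $7/4$ or $5/3$, and the mechanism is not the right one.
\begin{itemize}
\item Your claim that common divisibility forces $x'\equiv x$ modulo a large part of $B_2$ is false. A prime power of $B_2\mid m+nx$ may divide $x'^r$ rather than $m+nx'$, and symmetrically for $B_1$.
\item More importantly, when $B$ is close to $P(x)$ the weight $XB/P(x)$ is about $X/e$ with $e=P(x)/B$ small. The real question is for how many $x$ some $x'<x$ satisfies $e\,P(x')=k\,P(x)$ with $1\le k<e$. Divisor bounds and sparsity of $r$-full numbers say nothing about this.
\end{itemize}
The paper answers that question geometrically. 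For $Q_{M,N}=T^r(M+NT)$ the ratio curve $e\,Q_{M,N}(s)=k\,Q_{M,N}(x)$ has no linear component (Lemma~\ref{lem:no-linear-ratio-component}), since $Q_{M,N}$ has distinct roots $0,-M/N$ and $k/e\neq1$. Bombieri--Pila then gives $\ll Z^{1/2+\varepsilon}$ points in $[1,2Z]^2$. For $r=2$ the curve is an absolutely irreducible cubic, giving $Z^{1/3+\varepsilon}$; that, not square-full structure, is the source of $5/3$.

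\textbf{Bounding $D_P$ by all invisible points discards the structure that confines you to few such curves.} The paper proceeds as follows:
\begin{enumerate}
\item It uses $p$-adic visibility to show that $h=\gcd(x,y)$ divides the earlier point. This reduces to primitive invisible points for $T^r(m+nhT)$, normalized to $Q_{M,N}$.
\item Primitivity gives $\gcd(x,e)=1$, hence $x^r\mid\Delta=\gcd(Q_{M,N}(x),Q_{M,N}(s))$. So for $t=\Delta/x^r>T$ one has $e=(M+Nx)/t\ll NZ/T$.
\item For $t\le T$, Lemma~\ref{lem:defect-small-divisor-count} leaves only $O(\tau(x))$ earlier $s$.
\item Balancing at $T=NZ^{3/4}$ (resp.\ $NZ^{2/3}$) and summing over $h$ gives the theorem.
\end{enumerate}
For non-primitive invisible points the cofactor $e$ can retain part of $x^r$. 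Then $e$ is not confined to $e\ll NZ/T$, and the same balancing does not go through as stated.
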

\par Finally, in Section~\ref{sec:random-pairs}, we extend the preceding theory to
visibility between two lattice points.  The essential point is that visibility
of \(B\) from \(A\) depends only on the displacement \(B-A\), so that the
local-global principle is unchanged by translation.  To treat arbitrary
displacement vectors, including those with negative or zero coordinates, we
introduce signed weighted homogeneous families.  These are the natural
extensions of our positive-weight homogeneous families which cover the
whole lattice rather than only the positive orthant.  For these families, we
obtain the following two-point analog of our earlier density theorem.

\begin{theorem}
\label{thm:signed-weighted-pair-visibility}
Fix positive integer weights with greatest common divisor \(1\), and let \(W\)
be their sum.  For the corresponding signed weighted homogeneous family, a
lattice point \(B\) is visible from a lattice point \(A\) if and only if it is
\(p\)-adically visible from \(A\) for every prime \(p\).  Moreover, the density
of ordered pairs for which this visibility condition holds is $\frac{1}{\zeta(W)}$. The local density at a prime \(p\) is \(1-p^{-W}\), so the
global density is the product of the local densities.
\end{theorem}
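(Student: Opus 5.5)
The plan is to reduce everything to the one-point theory by translation, and then run the arguments of Theorem~\ref{thm:local-detectability} and Theorem~\ref{thm:full-lattice-density} on the signed family.

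\textbf{Setting up the signed family.} Fix weights \(w_1,\dots,w_n\) with \(\gcd=1\). I take the signed weighted homogeneous family to consist of the branches
\[
t\mapsto (\varepsilon_1 a_1 t^{w_1},\dots,\varepsilon_n a_n t^{w_n}),
\]
with signs \(\varepsilon_i\in\{\pm1\}\) and coefficients \(a_i\in\Q_{>0}\), together with the lower-dimensional versions in which some coordinates are identically \(0\). The target is for these branches to partition \(\Z^n\setminus\{0\}\).

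Call \(B\) \emph{visible from} \(A\) if \(D=B-A\) is visible from the origin on its branch. Call it \(p\)-adically visible from \(A\) if \(D\) is \(p\)-adically visible. The claim then reduces to the one-point statement for \(D\), after two checks.

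\textbf{Step 1: local detectability.} First I check that the signed family is a visibility datum; the zero-coordinate strata are handled by dropping those coordinates. Next I check the homogeneity hypothesis \(\phi(dt)=d\cdot\phi(t)\) for \(d>0\). This holds coordinatewise, since the sign \(\varepsilon_i\) factors out. Theorem~\ref{thm:local-detectability} then applies verbatim, giving the equivalence of global visibility with \(p\)-adic visibility for all \(p\).

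For the density statements I need the following reformulation. Write the integer points on the branch through \(D\) as \(d\cdot\phi(t_0)\), where \(t_0\) is the minimal parameter. Then \(D\) is invisible if and only if \(D=e\cdot D'\) for some integer point \(D'\) and some rational \(e>1\). Because \(\gcd(w_i)=1\), I expect this to force \(e\) to be an integer, i.e.\ \(D=(e^{w_1}D'_1,\dots,e^{w_n}D'_n)\) with \(e\ge2\). Hence \(D\) is visible exactly when no prime \(p\) satisfies \(p^{w_i}\mid D_i\) for all \(i\). The same reasoning as in the proof of Theorem~\ref{thm:full-lattice-density} gives this, and \(p\)-adic visibility at \(p\) is exactly the absence of that divisibility at \(p\).

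\textbf{Step 2: densities of pairs.} I count pairs \((A,B)\in([-X,X]^n)^2\), or equivalently \((A,D)\) with a mild boundary adjustment. Since the condition depends only on \(D\), for each fixed \(A\) the count of good \(B\) is the number of \(D\) in a box of side \(\asymp X\) that avoid \(p^{w_i}\mid D_i\) for all \(i\), for every \(p\).

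The local density at \(p\) is \(1-\prod_i p^{-w_i}=1-p^{-W}\), because divisibility conditions on congruence classes are equidistributed in any box of side \(\to\infty\), uniformly in the translate. For the global density I use Möbius inversion:
\[
\#\{D \text{ in box}\}=\sum_{m}\mu(m)\prod_i\Bigl(\tfrac{\text{side}}{m^{w_i}}+O(1)\Bigr).
\]
The sum restricts to \(m\le \text{side}^{1/w_{\min}}\), or is truncated otherwise. The main term gives \(\text{side}^n\sum\mu(m)m^{-W}=\text{side}^n/\zeta(W)\). The tail \(m>M\) is bounded by \(\text{side}^n\sum_{m>M}m^{-W}\) together with the error terms. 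Averaging over \(A\) then gives pair density \(1/\zeta(W)=\prod_p(1-p^{-W})\).

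\textbf{Main obstacle.} The error control is the delicate part. Consider the products \(\prod_i(\text{side}/m^{w_i}+O(1))\) with some \(w_i=1\). They contribute terms like \(\text{side}^{n-1}\) summed over \(m\le \text{side}\), which is \(O(\text{side}^n)\) and not \(o(\text{side}^n)\). To fix this, I will not expand the product. I will bound \(\#\{D:m^{w_i}\mid D_i\ \forall i\}\le\prod_i(2\,\text{side}/m^{w_i}+1)\) and observe that it is zero unless \(m^{w_{\max}}\le\text{side}\), except for the contribution of \(D_i=0\). Since \(W\ge2\) when \(n\ge2\), the tail over \(M<m\le\text{side}\) is \(\ll\text{side}^n M^{1-W}+\text{side}^{n-1}\cdot(\text{lower-order})\). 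The points with a zero coordinate have density zero and are negligible. Letting \(M\to\infty\) after \(X\to\infty\) completes the argument.
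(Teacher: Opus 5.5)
Your proposal is correct and follows essentially the same route as the paper: translation plus Theorem~\ref{thm:local-detectability} gives local detectability. For the densities you discard displacements with a zero coordinate and apply M\"obius inversion, using the uniform bound $\ll X^{n}/m^{W}$ on displacements with all coordinates nonzero and $m^{w_i}\mid h_i$. That bound is exactly the paper's estimate for $N_m^\ast(X)$. Your truncation at $M$ with $M\to\infty$ (instead of passing the limit through the sum) and your symmetric box (instead of $\{1,\dots,N\}^n$) make no difference.

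One small slip: $\gcd(w_i)=1$ does not force every rational scaling factor $e>1$ to be an integer. For example, with unit weights, $(3,3)=\tfrac32\cdot(2,2)$. What the valuation argument of Theorem~\ref{thm:full-lattice-density} actually gives, and all you need, is that any prime $p$ dividing the numerator of $e$ satisfies $p^{w_i}\mid D_i$ for every $i$.
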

\noindent For a more precise formulation of this result, see Theorem \ref{thm:signed-weighted-pair-visibility}.

\par We also study in Section~\ref{sec:sparse} what happens when the branch
locus is \emph{sparse}, which means that it is contained in a
lower-dimensional algebraic subvariety of the ambient affine space.
 We consider two different situations.  In the first, the
sparse set is still described by freely choosing several integral
coordinates and using homogeneous polynomials to determine the remaining
coordinates.  More precisely, starting from homogeneous polynomials
\(F_1,\dots,F_s\) in \(r\) variables, we consider points of the form
\[
 (x_1,\dots,x_r,F_1(x_1,\dots,x_r),\dots,F_s(x_1,\dots,x_r)).
\]
We call these \emph{sparse radial graph families}; see
Section~\ref{sec:sparse}. Under a natural
condition on the shape of the region obtained by imposing a height
bound, Theorem~\ref{thm:sparse-Davenport} shows that visibility is
equivalent to \(\gcd(x_1,\dots,x_r)=1\), that the local density at a
prime \(p\) is \(1-p^{-r}\), and that the global density is
\(1/\zeta(r)\).  We work this out explicitly for the family
\(\phi_a(t)=(t,at,a^2t^2)\) in
Example~\ref{prop:first-sparse-family}, as well as for a
higher-dimensional example in Example~\ref{ex:davenport-sparse-family}.
The hypothesis in Theorem~\ref{thm:sparse-Davenport} is not merely a
technical convenience: Example~\ref{ex:sparse-cusp}, based on
\(\Psi(x,y)=(x,y,x^dy)\), shows that when it fails the density can
change from \(1/\zeta(2)\) to \(1/\zeta(d+1)\).

\par The second sparse situation is quite different.  Rather than describing
the branch locus as a polynomial graph over a set of independent
integral coordinates, we start with a homogeneous affine variety and
take the branches to be the rational rays through the origin.  Thus,
for example, on the Pythagorean cone \(X^2+Y^2=Z^2\), each branch is
simply a ray through a rational point of the cone.  In this setting a
point is visible precisely when its ambient coordinates are relatively
prime, and this condition is detected prime by prime; this is
Proposition~\ref{prop:cone-local-global}.  The density question is then
reduced to understanding how many primitive integral points of bounded
height lie on the variety.  Proposition~\ref{prop:cone-density} shows
that the exponent governing the growth of these primitive points also
governs the resulting local and global visibility densities.  We
illustrate this principle with the Pythagorean cone in
Proposition~\ref{prop:Pythagorean-visibility}, the quadratic cone
\(XZ=Y^2\) in Proposition~\ref{prop:quadratic-cone-density}, the rank-one cone
 \(2\times2\) matrices in
Proposition~\ref{prop:rank-one-density}, and the quadratic Veronese
cone in Proposition~\ref{prop:Veronese-density}.  These examples show
in particular that there is no single zeta exponent determined by the
dimension of the sparse variety: the global density can be zero, can
equal \(1/\zeta(2)\), and in the Veronese example is
\(1/\zeta(3/2)\). Thus, even a nonintegral exponent arises naturally
from the growth of integral points with respect to the chosen height.
\subsection{Open questions}

The above results suggest that visibility can be viewed as a local-global
phenomenon.  Exact local detectability asks for the equality
\[
        \cB^{\operatorname{vis}}(\cY)
        =
        \bigcap_p \cB_p^{\operatorname{vis}}(\cY).
\]
This equality holds for positive-weight homogeneous families, but Theorem \ref{thm:polynomial-local-global} shows that it fails
for non-monomial polynomial families. Nevertheless, the defect set $\cB^{\bad}(\cY_P)$ has density $0$. These observations naturally lead us to make the following conjecture.

\begin{lconj}
Let \(\cY\) be a visibility datum such that \(\#\cB(\cY;X)\) has polynomial growth, every local visibility condition has a relative density, and the corresponding large-prime tail satisfies a uniform estimate analogous to \eqref{eq:large-prime-tail}. Then $\mathfrak{d}^{\bad}(\cY)=0$.
\end{lconj}

The examples in Section 
\ref{sec:sparse} suggest the following conjecture.

\begin{lconj}
Let \(n\geq 2\), and let \(\cY\) be a visibility datum whose branches are
defined for all \(t>0\) and have the form
\[
        \phi_\alpha(t)
        =
        \bigl(a_1(\alpha)t^{w_1},\dots,
              a_n(\alpha)t^{w_n}\bigr),
\]
where \(w_1,\dots,w_n\in\Z_{\geq 1}\) and
\(a_i(\alpha)\in\Q_{>0}\). Assume that $\gcd(w_1,\dots,w_n)=1$ and suppose that \(\cB(\cY)\) is sparse. Then, for every prime \(p\), the relative density \(\mathfrak d_p^{\operatorname{vis}}(\cY)\) should exist, the Euler product of the local densities should converge, and the relative density of globally visible
points should exist and satisfy
\[
        \mathfrak d^{\operatorname{vis}}(\cY)
        =
        \prod_p \mathfrak d_p^{\operatorname{vis}}(\cY),
\]
where both the global density and the local densities are taken relative to
\(\cB(\cY)\).
\end{lconj}
\noindent Note that this Conjecture holds when conditions (i) and (ii) of Theorem \ref{thm:full-lattice-density} are satisfied.
\par For all the sparse examples considered in this article, it turns out that the global density is of the form $\frac{1}{\zeta(n)}$ for some rational number $n>1$. It is natural to ask if this is the case for all weighted homogenous families \eqref{weightedfamilies}?

\subsection*{Acknowledgements}
SC gratefully acknowledges partial support from the Core Research Grant 
CRG/2023/001743 from ANRF, Department of Science and Technology (DST), GoI. SC and AR thank the International Center for Theoretical Sciences, Bengaluru,
where this project was initiated during the discussion meeting
\emph{The Classical Circle Method and the Large Sieve}, held in May,
2026. AR also thanks the Department of Mathematics at
IIIT Delhi for its hospitality during a visit during which part of this work was
carried out. SC and AR thank John Voight for valuable discussions during the preparation of this article.
\section{Ordered branches and visibility}

Throughout, we shall fix an affine variety
$X\subseteq \mathbb{A}_{\mathbb{R}}^n$ defined over $\Q$ and assume that $0\in X$. Recall that a rational map $\phi:\A^1_{\mathbb{R}}\dashrightarrow X$ is defined on a Zariski open subset
$W\subseteq \A^1_{\R}$ and, on real points $t\in W(\R)$, is given by
\[
\phi(t)
=
\bigl(
f_1(t)/g_1(t),\dots,f_n(t)/g_n(t)
\bigr),
\]
where $f_i$ and $g_i$ are polynomials with coefficients in $\Q$ and $g_i$
are all non-vanishing on $W$. We set $X(\Q):=X\cap \Q^n$ and $X(\Z):=X\cap \Z^n$.

\begin{definition}
An ordered branch on \(X\) is a pair \((U,\phi)\), where
\(U\subset \mathbb A_{\mathbb R}^1\) is a nonempty Zariski open subset defined over
\(\mathbb Q\) with \(0\in U(\Q)\), and $\phi:U\longrightarrow X$ is the restriction of a rational map \(\mathbb A^1_{\R}\dashrightarrow X\) which
is regular on \(U\). We set $U^+:=U(\R)\cap \mathbb R_{>0}$ and assume that the restriction of \(\phi\) to \(U^+\) is injective.
For \(t,t'\in U^+\), \(\phi(t)\) is said to precede \(\phi(t')\) on the
branch if \(t<t'\).
\end{definition}

\begin{definition}\label{def:visibility-datum}
A visibility datum is a collection
\[
\cY=\{(U_\alpha,\phi_\alpha)\}_{\alpha\in I}
\]
of ordered branches such that the following conditions hold:
\begin{enumerate}
    \item $\phi_\alpha(0)=0$ for all $\alpha\in I$ and, if $t\in U_\alpha(\R)$ and $\phi_\alpha(t)=0$, then $t=0$.
    \item For $\alpha\neq \beta$, we have that \[\phi_\alpha(U_\alpha^+)\cap \phi_\beta(U_\beta^+)=\emptyset.\] 
\end{enumerate}
\end{definition}
\noindent For $\alpha\in I$ and $t\in U_\alpha^+$ we set $P_{\alpha,t}:=\phi_\alpha(t)$ and denote by $\cB(\cY)$ the set of points $P_{\alpha,t}\in X(\Z)$ where $t\in U_\alpha^+$, as in \eqref{cBY}.
\begin{definition}
A branch-point $P_{\alpha,t}\in \cB(\cY)$ is globally visible if there is no
$s\in U_\alpha^+$ such that $0<s<t$ and $P_{\alpha,s}\in \cB(\cY)\cap\mathbb{Z}^n$. Equivalently, $P_{\alpha, t}$ is globally visible if it is the first integral
point on its branch.
\end{definition}
Let $p$ be a prime number. For $x\in \Q$, we write $v_p(x)$ for the
$p$-adic valuation normalized by $v_p(p)=1$, and we set
$v_p(0)=+\infty$. For a vector $P=(P_1,\dots,P_n)\in \Z^n$, define
\[
v_p(P):=\min_{1\leq i\leq n}v_p(P_i).
\]

\begin{definition}
Let $p$ be a prime. A branch-point $P_{\alpha,t}\in \cB(\cY)$ is
$p$-adically visible if, for every $s\in U_\alpha^+$ such that $0<s<t$ and $P_{\alpha,s}\in \Z^n$, one has
\[
v_p(P_{\alpha,t})\leq v_p(P_{\alpha,s}).
\]
\end{definition}Recall that $\cB^{\vis}(\cY)$ is the set of globally visible points in $\cB(\cY)$ and $\cB_p^{\vis}(\cY)$ for the set of $p$-adically visible branch-points, also that the defect set $\cB^{\bad}(\cY)$ is given by \eqref{defect set}. We use the abbreviation $\mathfrak d^{\vis}(\cY):=\mathfrak d_{\cB(\cY)}\left(\cB^{\vis}(\cY)\right)$ for the density of visible points.

\par For a branch-point
$P_{\alpha,t}\in \cB(\cY)$, define the set of earlier integral parameters
\[
E_\alpha(t)
:=
\left\{
s\in U_\alpha:
0<s<t,\;
\phi_\alpha(s)\in \Z^n
\right\}.
\]
\begin{proposition}\label{prop:obstruction}
Let $P_{\alpha, t}\in \cB(\cY)$. Then $P_{\alpha, t}\in \cB^{\bad}(\cY)$ if and only if the following two conditions hold:
\begin{enumerate}[label=(\roman*)]
\item $E_\alpha(t)$ is nonempty;
\item for every $s\in E_\alpha(t)$ and every prime $p$, one has
\[
v_p(P_{\alpha,t})\leq v_p(P_{\alpha,s}).
\]
\end{enumerate}
\end{proposition}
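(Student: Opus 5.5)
The plan is simply to unwind the definitions. By \eqref{defect set}, a point $P_{\alpha,t}\in\cB(\cY)$ lies in $\cB^{\bad}(\cY)$ exactly when two things hold: it lies in $\cB_p^{\vis}(\cY)$ for every prime $p$, and it does not lie in $\cB^{\vis}(\cY)$. I will translate each membership condition into the language of the set $E_\alpha(t)$ of earlier integral parameters and then combine the two translations.

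First I treat global visibility. By definition, $P_{\alpha,t}$ is globally visible iff there is no $s\in U_\alpha^+$ with $0<s<t$ and $\phi_\alpha(s)\in\Z^n$. Any $s\in U_\alpha$ with $0<s<t$ is real and positive, so it lies in $U_\alpha^+$. Hence the parameters excluded by global visibility are exactly the elements of $E_\alpha(t)$. So $P_{\alpha,t}\notin\cB^{\vis}(\cY)$ iff $E_\alpha(t)\neq\varnothing$, which is condition (i).

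Second I treat $p$-adic visibility. By definition, $P_{\alpha,t}\in\cB_p^{\vis}(\cY)$ iff $v_p(P_{\alpha,t})\le v_p(P_{\alpha,s})$ for every $s\in U_\alpha^+$ with $0<s<t$ and $P_{\alpha,s}\in\Z^n$. By the same identification, this is the inequality for every $s\in E_\alpha(t)$. Quantifying over all primes $p$ gives $P_{\alpha,t}\in\bigcap_p\cB_p^{\vis}(\cY)$ iff condition (ii) holds. When $E_\alpha(t)$ is empty this condition holds vacuously, which is consistent with global visibility implying $p$-adic visibility.

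Combining the two equivalences proves the proposition. The only point requiring care, and it is minor, is checking that the parameter set in the definitions (indexed by $U_\alpha^+$ and the condition $P_{\alpha,s}\in\cB(\cY)\cap\Z^n$) agrees with $E_\alpha(t)$. This holds because $\phi_\alpha(s)\in X(\Z)$ with $s>0$ is precisely membership in $\cB(\cY)$. No real obstacle is expected.
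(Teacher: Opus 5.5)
Your proposal is correct and follows the paper's proof: both just unwind the definitions. Failure of global visibility is identified with $E_\alpha(t)\neq\varnothing$, and $p$-adic visibility at every prime with condition (ii). Your extra check that $E_\alpha(t)$ coincides with the parameter set used in the definitions (positive $s\in U_\alpha$ lie in $U_\alpha^+$, and $\phi_\alpha(s)\in\Z^n$ with $s>0$ is exactly membership in $\cB(\cY)$) is a harmless detail the paper leaves implicit.
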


\begin{proof}
By definition, the branch-point $P_{\alpha, t}$ is not globally visible precisely when
$E_\alpha(t)$ is nonempty.  On the other hand, it is $p$-adically visible if
and only if
\[
v_p(P_{\alpha,t})\leq v_p(P_{\alpha,s})
\]
for every $s\in E_\alpha(t)$.  Requiring this for every prime $p$ gives the
result.
\end{proof}

This gives a useful sufficient condition for local detectability.

\section{Positive-weight homogeneous families}

In this section, we consider a specialized setting in which every locally visible point is globally visible.

\subsection{Density results for visibility}
\begin{definition}
Let $w_1,\dots,w_n$ be positive integers. The associated positive-weight
action of $\Gm$ on $\mathbb{R}^n$ is $\rho:\Gm\longrightarrow \GL_n$, given by $\rho(d)=\operatorname{diag}(d^{w_1},\dots,d^{w_n})$. We use its restriction to $d\in\R_{>0}$ when comparing the order of parameters on a branch.
\end{definition}

\begin{definition}
A visibility datum $\cY=\{(U_\alpha,\phi_\alpha)\}_{\alpha\in I}$ is positive-weight homogeneous if there are positive integers
$w_1,\dots,w_n$ such that, for every $\alpha$, every $t\in U_\alpha$, and
every $d\in \mathbb{R}_{>0}$ with $dt\in U_\alpha$, one has
\[
\phi_\alpha(dt)=\rho(d)\phi_\alpha(t),
\]
where $\rho(d)=\operatorname{diag}(d^{w_1},\dots,d^{w_n})$.
\end{definition}

\begin{proof}[Proof of Theorem \ref{thm:local-detectability}]
Assume that \(P_{\alpha,t}\) is not globally visible.  We show that
\(P_{\alpha,t}\) is not \(p\)-adically visible for some prime \(p\).  Since
\(P_{\alpha,t}\) is not globally visible, there exists
\(s\in U_\alpha^+\) with \(0<s<t\) such that
\[
        Q:=\phi_\alpha(s)\in \Z^n.
\]
Set $P:=\phi_\alpha(t)$ and $d:=\frac ts$. Then \(d>1\), and by positive-weight homogeneity,
\[
        P
        =
        \phi_\alpha(t)
        =
        \phi_\alpha(ds)
        =
        \rho(d)\phi_\alpha(s)
        =
        \rho(d)Q.
\]
Writing $P=(P_1,\dots,P_n)$ and $Q=(Q_1,\dots,Q_n)$,
we have $P_i=d^{w_i}Q_i$ for all $i$. The point \(Q\) is nonzero, since \(s>0\) and by Definition
\ref{def:visibility-datum} the only parameter mapping to \(0\) is \(0\). Setting
\[
        J:=\{i:Q_i\neq 0\},
\]
we have that \(J\neq\varnothing\).  For \(i\in J\), we have \(P_i\neq 0\), and since
\(P_i,Q_i\in\Z\), the relation
\[
        P_i=d^{w_i}Q_i
\]
implies
$
        d^{w_i}=\frac{P_i}{Q_i}\in\Q_{>0}.
$
Letting $h:=\gcd\{w_i:i\in J\}$, choose integers \(c_i\) such that
$
        \sum_{i\in J} c_iw_i=h.
$
We deduce that
\[
        d^h
        =
        \prod_{i\in J}(d^{w_i})^{c_i}
        \in\Q_{>0}.
\]

Set $D:=d^h$; since \(d>1\), we have \(D>1\).  Write \(D=M/N\) in lowest terms, with
\(M,N\in\Z_{\geq 1}\).  Since \(D>1\), we have \(M>N\), so \(M>1\).  Choose a
prime \(p\mid M\).  Then $v_p(D)>0$. For \(i\in J\), the integer \(h\) divides \(w_i\), and hence $d^{w_i}=D^{w_i/h}$. Therefore
\[
        v_p(P_i)
        =
        v_p(Q_i)+v_p(d^{w_i})
        =
        v_p(Q_i)+\frac{w_i}{h}v_p(D)
        >
        v_p(Q_i).
\]
For \(i\notin J\), one has \(Q_i=0\), and hence \(P_i=0\), so both
corresponding valuations are \(+\infty\).  For every $i\in J$, the difference \(v_p(P_i)-v_p(Q_i)\) is a positive integer, while the coordinates outside \(J\) have valuation $+\infty$. Thus, taking minima over all
coordinates,
\[
        v_p(P)\geq v_p(Q)+1>v_p(Q).
\]
Therefore \(Q\) is an earlier integral point on the same branch with strictly
smaller \(p\)-adic valuation.  Hence \(P_{\alpha,t}\) is not
\(p\)-adically visible.  This proves local detectability.
\end{proof}

The theorem applies to the weighted homogeneous parametrizations
\begin{equation}\label{eq:weighted-parametrization}\phi_\alpha(t)
=
(a_1(\alpha)t^{w_1},\dots,a_n(\alpha)t^{w_n}),
\end{equation}
where $w_i\in \Z_{>0}$ and $a_i:I\longrightarrow \Q_{>0}$, provided that the conditions of Definitions \ref{def:visibility-datum} hold for $\cY$. We note that positivity of the weights ensures that the parametrization is regular at $0$ and vanishes there.

\begin{figure}[t]
\centering
\begin{tikzpicture}[x=1.25cm,y=.55cm,>=stealth]
  \draw[->] (0,0) -- (3.35,0) node[right] {$x$};
  \draw[->] (0,0) -- (0,8.1) node[above] {$y$};

  \draw[domain=0:2.75,samples=80,smooth,variable=\t]
        plot ({\t},{\t*\t});

  \fill (1,1) circle (2pt)
        node[above left] {$Q=(1,1)$};
  \fill (2,4) circle (2pt)
        node[above right] {$P=(2,4)$};

  \draw[->,dashed] (1.12,1.2) -- (1.88,3.65)
        node[midway,right] {$\rho(2)$};

  \node at (2.55,6.8) {$y=x^2$};
\end{tikzpicture}
\caption{Weighted scaling on the branch \(y=x^2\).  The dilation with
weights \((1,2)\) sends \(Q\) to \(P\): the first coordinate is
multiplied by \(2\), while the second is multiplied by \(2^2\).}
\label{fig:weighted-scaling}
\end{figure}
We now prove a general result for studying visibility densities for positive-weight homogeneous families. We consider the special case where $\cB(\cY)=\Z_{\geq 1}^n$. This gives us a way to pass from a finite set of local conditions to infinitely many
local conditions. For each prime \(p\), let \(U_p\subseteq \cB(\cY)\) be a local
condition. In our applications, $U_p$ will consist of $p$-adically visible points in $\cB(\cY)$. 
\begin{assumption}\label{ass:local-congruences} We assume that \(U_p\) is defined by congruences modulo a fixed
power of \(p\), that is, there exists an integer \(k_p\geq 1\) and a subset $R_p\subseteq (\mathbb Z/p^{k_p}\mathbb Z)^n$ such that
\[
U_p
=
\{a\in \mathbb Z_{\geq 1}^n : a\bmod p^{k_p}\in R_p\}.
\]
\end{assumption}
We set \[U_p(X):=U_p\cap[-X, X]^n.\] 
\begin{lemma}
\label{lem:single-local-density}
The density
\[
\mathfrak d_p
:=
\lim_{X\to\infty}
\frac{|U_p(X)|}{|\cB(\cY;X)|}
\]
exists and is given by $\mathfrak d_p=\frac{|R_p|}{p^{nk_p}}$.
\end{lemma}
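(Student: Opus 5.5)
The plan is a direct count of lattice points in residue classes. Throughout, $\cB(\cY)=\Z_{\geq1}^n$. The height of a point in the positive orthant is its largest coordinate, so $\cB(\cY;X)=\Z^n\cap[1,X]^n$. Likewise $U_p(X)=U_p\cap[1,X]^n$, since $U_p\subseteq\Z_{\geq1}^n$. Set $N=\lfloor X\rfloor$. Then
\[
|\cB(\cY;X)|=N^n,\qquad |U_p(X)|=\sum_{r\in R_p}\#\{a\in[1,N]^n\cap\Z^n : a\equiv r \bmod p^{k_p}\}.
\]

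The key step is to count each residue class. Fix $r=(r_1,\dots,r_n)\in R_p$, and put $q=p^{k_p}$. The condition $a\equiv r\bmod q$ splits coordinatewise. For each $i$, the number of integers $a_i\in[1,N]$ with $a_i\equiv r_i\bmod q$ is $N/q+\theta_i$ with $|\theta_i|\leq1$. Taking the product gives
\[
\#\{a\in[1,N]^n\cap\Z^n: a\equiv r\bmod q\}=\prod_{i=1}^n\Bigl(\frac Nq+\theta_i\Bigr)=\frac{N^n}{q^n}+O_{n,q}\bigl(N^{n-1}\bigr).
\]
The implied constant depends only on $n$ and $q$, and these are fixed because $p$ is fixed.

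Summing over the finitely many $r\in R_p$ gives
\[
|U_p(X)|=\frac{|R_p|}{p^{nk_p}}N^n+O\bigl(|R_p|N^{n-1}\bigr).
\]
Dividing by $N^n$ and letting $X\to\infty$ shows that the limit exists and equals $|R_p|/p^{nk_p}$.

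There is no real obstacle here. The only point needing care is the bookkeeping for the domain. The set $U_p(X)$ is defined using the box $[-X,X]^n$, but it lies inside the positive orthant, so it coincides with the points of $U_p$ of height at most $X$. The error term is uniform in $r$ but not in $p$. That is harmless for this lemma, although it will matter later when infinitely many primes are combined.
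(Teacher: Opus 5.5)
Your proposal is correct and follows the paper's argument. You count each residue class modulo $p^{k_p}$ coordinatewise, which gives $|U_p(X)|=\frac{|R_p|}{p^{nk_p}}N^n+O(N^{n-1})$ against $|\cB(\cY;X)|=N^n$ with $N=\lfloor X\rfloor$; you also spell out the floor-function bookkeeping and the observation that $U_p(X)\subseteq[1,X]^n$, both of which the paper leaves implicit.
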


\begin{proof}
With \(M=p^{k_p}\) and \(N=\lfloor X\rfloor\), one has \[|U_p(X)|=\frac{|R_p|}{M^n}N^n+O_M(N^{n-1})\] and \(|\cB(\cY;X)|=N^n\).
Hence, letting \(X\to\infty\), \[\mathfrak d_p=|R_p|/M^n=|R_p|/p^{nk_p}.\]
\end{proof}

\begin{assumption}\label{ass:euler-product}
    Assume that the product $\prod_p \mathfrak d_p$ converges.
\end{assumption}

For a finite set of primes $S$, set
\[
U_S:=\bigcap_{p\in S}U_p \quad \text{and}\quad
U:=\bigcap_p U_p.
\]
\begin{lemma}
\label{lem:finite-local-density}
The density \(\mathfrak d_{\cB(\cY)}(U_S)\) exists and is given by
\[
\mathfrak d_{\cB(\cY)}(U_S)
=
\prod_{p\in S}\frac{|R_p|}{p^{nk_p}}
=
\prod_{p\in S}\mathfrak d_p.
\]
\end{lemma}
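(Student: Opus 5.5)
The plan is to combine the finitely many congruence conditions into a single congruence condition modulo one integer, using the Chinese remainder theorem, and then run the counting argument of Lemma~\ref{lem:single-local-density} with that combined modulus. Set
\[
M:=\prod_{p\in S}p^{k_p}.
\]
By the Chinese remainder theorem, reduction modulo the factors gives a ring isomorphism
\[
\Z/M\Z\;\cong\;\prod_{p\in S}\Z/p^{k_p}\Z,
\]
and hence an isomorphism
\[
(\Z/M\Z)^n\;\cong\;\prod_{p\in S}(\Z/p^{k_p}\Z)^n .
\]
Let \(R_S\subseteq(\Z/M\Z)^n\) be the preimage of \(\prod_{p\in S}R_p\) under this isomorphism. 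For \(a\in\Z_{\geq1}^n\), we have \(a\in U_S\) exactly when \(a\bmod p^{k_p}\in R_p\) for every \(p\in S\). This is equivalent to \(a\bmod M\in R_S\). Thus \(U_S\) is defined by a single congruence condition modulo \(M\), and
\[
|R_S|=\prod_{p\in S}|R_p|.
\]

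Next we count points. Let \(N=\lfloor X\rfloor\), and recall that \(\cB(\cY;X)=[1,N]^n\cap\Z^n\) has exactly \(N^n\) elements. Partition \([1,N]^n\) into at most \(\lfloor N/M\rfloor^n\) complete boxes of side \(M\), together with a leftover region.

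1. Each complete box contains exactly one representative of every residue class in \((\Z/M\Z)^n\). So it contains exactly \(|R_S|\) points of \(U_S\).
2. The leftover region consists of the points having some coordinate in the final incomplete block of length less than \(M\). It therefore contains \(O(nMN^{n-1})\) lattice points.

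Combining these,
\[
|U_S\cap[1,X]^n|=\frac{|R_S|}{M^n}N^n+O_{M,n}(N^{n-1}).
\]

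Dividing by \(N^n\) and letting \(X\to\infty\) shows that the density exists and equals
\[
\frac{|R_S|}{M^n}=\prod_{p\in S}\frac{|R_p|}{p^{nk_p}}.
\]
By Lemma~\ref{lem:single-local-density}, each factor \(|R_p|/p^{nk_p}\) is \(\mathfrak d_p\), which gives the final equality \(\prod_{p\in S}\mathfrak d_p\).

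There is no real obstacle here. The one point that needs care is the bookkeeping in the Chinese remainder step: we must check that the conditions imposed at different primes become independent. This holds because the moduli \(p^{k_p}\) are pairwise coprime, so the Chinese remainder isomorphism identifies \(R_S\) with the full product set \(\prod_{p\in S}R_p\). Because \(S\) is finite, the error term depends only on \(M\) and is uniform in \(X\), so no tail estimate is needed. Tail estimates only enter later, when passing to the intersection \(U\) over all primes using Assumption~\ref{ass:euler-product}.
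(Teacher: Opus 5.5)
Your proof is correct and follows the paper's argument. The paper's entire proof is the remark that the lemma is an easy consequence of the Chinese remainder theorem; you supply the details by merging the conditions into a single congruence modulo $M=\prod_{p\in S}p^{k_p}$ and rerunning the box-counting of Lemma~\ref{lem:single-local-density}. (One cosmetic point: the partition contains exactly, not at most, $\lfloor N/M\rfloor^n$ complete boxes, and the exact count is what your main term uses.)
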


\begin{proof}
This is an easy consequence of the Chinese remainder theorem.
\end{proof}

Let $U_p'$ be the complement of $U_p$ and set $U_p'(X):=U_p'\cap[-X, X]^n$. Given $Y>0$, let 
\[\mathcal{W}_Y:=\bigcup_{p>Y} U_p'=\left\{a\in \Z^n_{\geq 1}\mid a\notin U_p\quad \text{for some}\quad p>Y\right\},\]
and set $U_Y:=\bigcap_{p\leq Y} U_p$.
\begin{theorem}\label{thm:density}
With respect to notation above, assume that 
\begin{enumerate}
    \item for each prime $p$, there are $k_p\geq1$ and $R_p\subseteq(\Z/p^{k_p}\Z)^n$ such that
\[
U_p=\{a\in\Z_{\geq1}^n:a\bmod p^{k_p}\in R_p\}, 
\]
\item the infinite product $\prod_p\mathfrak d_p$ converges,
\item and that one has the following limit:\begin{equation}\label{eq:large-prime-tail}\lim_{Y\to\infty}\limsup_{X\to\infty}\frac{|\mathcal{W}_Y(X)|}{|\cB(\cY;X)|}=0.
\end{equation}
\end{enumerate}
Then $\mathfrak d_{\cB(\cY)}(U)=\prod_p\mathfrak{d}_p$.
\end{theorem}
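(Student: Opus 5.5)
The approach is a sandwich argument that uses Lemma~\ref{lem:finite-local-density} for the finitely many small primes and the tail hypothesis \eqref{eq:large-prime-tail} for the large ones. Fix \(Y>0\) and write \(U_Y=\bigcap_{p\le Y}U_p\), which is \(U_S\) for the finite set \(S=\{p\le Y\}\). We have \(U\subseteq U_Y\). Also, if \(a\in U_Y\setminus U\), then \(a\notin U_p\) for some \(p>Y\), so
\[
U_Y\setminus U\subseteq \mathcal W_Y.
\]
Intersecting with \([-X,X]^n\) and dividing by \(|\cB(\cY;X)|\) will give, for every \(X\),
\[
\frac{|U_Y(X)|-|\mathcal W_Y(X)|}{|\cB(\cY;X)|}\le \frac{|U(X)|}{|\cB(\cY;X)|}\le \frac{|U_Y(X)|}{|\cB(\cY;X)|}.
\]

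Next I let \(X\to\infty\) with \(Y\) fixed. By Lemma~\ref{lem:finite-local-density}, the ratio \(|U_Y(X)|/|\cB(\cY;X)|\) tends to \(\prod_{p\le Y}\mathfrak d_p\). Hence
\[
\prod_{p\le Y}\mathfrak d_p-\limsup_{X\to\infty}\frac{|\mathcal W_Y(X)|}{|\cB(\cY;X)|}\le \liminf_{X\to\infty}\frac{|U(X)|}{|\cB(\cY;X)|}\le\limsup_{X\to\infty}\frac{|U(X)|}{|\cB(\cY;X)|}\le \prod_{p\le Y}\mathfrak d_p .
\]
Now let \(Y\to\infty\). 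Hypothesis (2) makes \(\prod_{p\le Y}\mathfrak d_p\to\prod_p\mathfrak d_p\), and hypothesis (3) sends the subtracted \(\limsup\) term to \(0\). So the liminf and limsup of \(|U(X)|/|\cB(\cY;X)|\) both equal \(\prod_p\mathfrak d_p\), and the density exists with that value.

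The hypotheses do all the analytic work, so I expect the only delicate point to be bookkeeping. Lemma~\ref{lem:finite-local-density} is stated for the density relative to \(\cB(\cY)\), with \(\cB(\cY)=\Z_{\ge1}^n\) and counts in \([-X,X]^n\). I would check that these truncations agree: since all sets lie in \(\Z_{\ge 1}^n\), truncating by \([-X,X]^n\) is the same as truncating by height \(\le X\). I would also note that if the product converges to \(0\), the argument still works verbatim, because only the upper bound is then needed.
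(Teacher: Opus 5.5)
Your proposal is correct and is essentially the paper's proof: it uses the same inclusions \(U\subseteq U_Y\) and \(U_Y\setminus U\subseteq\mathcal W_Y\), the same use of Lemma~\ref{lem:finite-local-density} at fixed \(Y\) to sandwich the \(\liminf\) and \(\limsup\), and the same passage \(Y\to\infty\) via hypotheses (ii) and (iii). Your two bookkeeping remarks are accurate and only make explicit what the paper leaves implicit: truncation by \([-X,X]^n\) agrees with truncation by height because everything lies in \(\Z_{\geq1}^n\), and the argument is unaffected if the product equals \(0\).
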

\begin{proof}
For \(Y>0\), we have $U\subseteq U_Y$ and $U_Y\setminus U\subseteq \mathcal W_Y$. Hence
\[
|U_Y(X)|-|\mathcal W_Y(X)|
\leq |U(X)|\leq |U_Y(X)|.
\]
By Lemma~\ref{lem:finite-local-density},
\[
\lim_{X\to\infty}\frac{|U_Y(X)|}{|\cB(\cY;X)|}
=\prod_{p\leq Y}\mathfrak d_p.
\]
Thus
\[
\prod_{p\leq Y}\mathfrak d_p-
\limsup_{X\to\infty}\frac{|\mathcal W_Y(X)|}{|\cB(\cY;X)|}
\leq
\liminf_{X\to\infty}\frac{|U(X)|}{|\cB(\cY;X)|}
\]
and
\[
\limsup_{X\to\infty}\frac{|U(X)|}{|\cB(\cY;X)|}
\leq \prod_{p\leq Y}\mathfrak d_p.
\]
Letting \(Y\to\infty\), assumptions (ii) and (iii) show that the
\(\liminf\) and \(\limsup\) both equal \(\prod_p\mathfrak d_p\).
\end{proof}

\begin{proof}[Proof of Theorem \ref{thm:full-lattice-density}]
Fix a prime \(p\) and write \(P=\phi_\alpha(t)=(x_1,\dots,x_n)\). We first claim that \(P\) is not \(p\)-adically visible if and only if $p^{w_i}\mid x_i$ for every $i$. Indeed, if these divisibilities hold, then
\(\phi_\alpha(t/p)=(x_1/p^{w_1},\dots,x_n/p^{w_n})\) is an earlier integral
point whose \(p\)-adic minimum valuation is strictly smaller.

Conversely, suppose that \(Q=\phi_\alpha(s)\) is an earlier integral point
with \(v_p(Q)<v_p(P)\), and put \(d=t/s>1\). Then
\(x_i=d^{w_i}Q_i\), so \(d^{w_i}\in\Q\) for every \(i\). Since
\(\gcd(w_1,\dots,w_n)=1\), Bézout's identity gives \(d\in\Q\). Writing
\(e=v_p(d)\), we have \(v_p(x_i)=v_p(Q_i)+w_i e\). If \(e=0\), then
\(v_p(P)=v_p(Q)\), while if \(e<0\), choosing a coordinate realizing
\(v_p(Q)\) gives \(v_p(P)<v_p(Q)\). Both are impossible, so \(e\geq1\),
and hence \(p^{w_i}\mid x_i\) for every \(i\).

Thus the complement of the \(p\)-adically visible locus is given by the
simultaneous divisibility conditions \(p^{w_i}\mid x_i\). Its density is
\(p^{-W}\), where \(W=\sum_iw_i\), and therefore
\(\mathfrak d_p^{\operatorname{vis}}(\cY)=1-p^{-W}\). By Theorem
\ref{thm:local-detectability}, global visibility is the intersection of
these local conditions.

It remains only to check the large-prime tail. For
\(\Omega_X=\{1,\dots,\lfloor X\rfloor\}^n\), the union bound gives
\[
 \frac{\#\bigl(\Omega_X\cap\bigcup_{p>Y}U_p'\bigr)}{\#\Omega_X}
 \leq \sum_{p>Y}p^{-W}.
\]
Since \(W>1\), the right-hand side tends to \(0\) as \(Y\to\infty\).
Theorem \ref{thm:density} therefore applies and yields
\[
 \mathfrak d^{\operatorname{vis}}(\cY)
 =\prod_p(1-p^{-W})=\frac1{\zeta(W)}.
\]
\end{proof}

\subsection{Some examples}
We now return to the affine visibility problem and illustrate the
preceding results with some basic weighted homogeneous families.
The first two examples recover classical visibility and weighted
lines of sight, while the last one explains the role of the
coprimality assumption on the weights in Theorem~1.3.

\noindent\textbf{(1) Classical visibility.}
Let \(n\geq 2\). Consider the index set
\[
I=
\left\{
\alpha=(m_2,\dots,m_n):
m_i\in \Q_{>0}\text{ for }2\leq i\leq n
\right\}.
\]
For \(\alpha\in I\), define
\[
\phi_\alpha(t)=(t,m_2t,\dots,m_nt).
\]
Set $Y_\alpha=\phi_\alpha(\mathbb R)$ and $Y_\alpha^+=\phi_\alpha(\mathbb R_{>0})$. Thus, \(Y_\alpha\) is a line passing through the origin and \(Y_\alpha^+\)
is its positive ray. Moreover, $\bigcup_{\alpha\in I}Y_\alpha^+$ contains \(\Q_{>0}^n\) and is dense in \(\mathbb R_{>0}^n\). This is a special case of the weighted homogeneous family
\eqref{eq:weighted-parametrization} with $w_1=\cdots=w_n=1$. Hence, visibility is locally detectable by Theorem
\ref{thm:local-detectability}.

Let $a=(a_1,\dots,a_n)\in \Z_{\geq 1}^n$. Then \(a\) lies on the branch with $m_i=a_i/a_1$ for $2\leq i\leq n$. Let $g=\gcd(a_1,\dots,a_n)$. Then $a=g(a_1/g,\dots,a_n/g)$ and $(a_1/g,\dots,a_n/g)$ is the primitive integral point on the same ray. Therefore, $a$ is visible if and only if $\gcd(a_1,\dots,a_n)=1$.

For a prime \(p\), the point \(a\) fails to be \(p\)-adically visible if and
only if $p\mid a_i$ for every $1\leq i\leq n$. Thus the \(p\)-adic visible locus is
\[
U_p=
\{a\in \Z_{\geq 1}^n:\text{there exists }i\text{ such that }p\nmid a_i\}.
\]
Its complement is
\[
U_p'=
\{a\in \Z_{\geq 1}^n:p\mid a_i\text{ for every }i\}.
\] We verify the conditions of Theorem
\ref{thm:density}. Let $N=\lfloor X\rfloor$ and
\[
\Omega_X=\{a=(a_1,\dots,a_n)\in \Z_{\geq 1}^n:
1\leq a_i\leq N\text{ for every }i\}.
\]
The number of points in \(\Omega_X\) that fail the -adic visibility of \(p\) is $\lfloor N/p\rfloor^n$.
Therefore,
\[
\mathfrak d_p
=
\lim_{X\to\infty}
\frac{|U_p\cap \Omega_X|}{|\Omega_X|}
=
1-1/p^n.
\]
For the large-prime tail, we have
\[
\left|\bigcup_{p>Y}U_p'\cap \Omega_X\right|
\leq
\sum_{p>Y}\lfloor N/p\rfloor^n.
\]
Dividing by \(N^n\) and taking \(\limsup\) as \(X\to\infty\), we obtain
\[
\limsup_{X\to\infty}
\frac{\left|\bigcup_{p>Y}U_p'\cap \Omega_X\right|}
{|\Omega_X|}
\leq
\sum_{p>Y}1/p^n.
\]
Since \(n\geq 2\), this tends to \(0\) as \(Y\to\infty\). Hence Theorem
\ref{thm:density} applies and gives
\[
\mathfrak d^{\operatorname{vis}}
=
\prod_p(1-1/p^n)
=
1/\zeta(n).
\]
\medskip

\noindent\textbf{(2) Weighted lines of sight.}
\textbf{Weighted lines of sight.}
We generalize the previous example. Fix positive integers $w_1,\dots,w_n\in \Z_{>0}$ such that $w_1=1$ and let $I=\Q_{>0}^{n-1}$. For $\alpha=(a_2,\dots,a_n)\in I$, define
\[
\phi_\alpha(t)
:=
(t^{w_1},a_2 t^{w_2}, \dots,a_nt^{w_n})=(t,a_2 t^{w_2}, \dots,a_nt^{w_n}).
\]
We assume \(a_i>0\) for every \(i\), so the image lies in the positive
orthant for \(t>0\). This is homogeneous for the positive-weight action
\[
d\cdot (x_1,\dots,x_n)
=
(d^{w_1}x_1,\dots,d^{w_n}x_n).
\]
Hence, visibility is locally detectable by Theorem
\ref{thm:local-detectability}. Since it is assumed that $w_1=1$, we have $\cB(\cY)=\Z_{\geq 1}^n$.

By Theorem \ref{thm:full-lattice-density}, for $x=(x_1,\dots,x_n)\in \Z_{\geq 1}^n$, the point $x$ is visible if and only if there is no prime $p$ such that $p^{w_i}\mid x_i$ for every $1\leq i\leq n$. The \(p\)-adic visible locus is
\[
U_p=
\{x\in \Z_{\geq 1}^n:
\text{there exists }i\text{ such that }p^{w_i}\nmid x_i\}.
\]
Its complement is
\[
U_p'=
\{x\in \Z_{\geq 1}^n:
p^{w_i}\mid x_i\text{ for every }1\leq i\leq n\}.
\]
Let $W=w_1+\cdots+w_n$ and $N=\lfloor X\rfloor$. For
\[
\Omega_X=
\{x=(x_1,\dots,x_n)\in \Z_{\geq 1}^n:
1\leq x_i\leq N\text{ for every }i\},
\]
the number of points which fail the \(p\)-adic visibility condition is
\[
\prod_{i=1}^n \lfloor N/p^{w_i}\rfloor.
\]
Therefore
\[
\mathfrak d_p
=
\lim_{X\to\infty}
\frac{|U_p\cap \Omega_X|}{|\Omega_X|}
=
1-1/p^W.
\]
The large-prime tail is bounded by
\[
\left|\bigcup_{p>Y}U_p'\cap \Omega_X\right|
\leq
\sum_{p>Y}\prod_{i=1}^n \lfloor N/p^{w_i}\rfloor.
\]
Dividing by \(N^n\) and taking \(\limsup\) as \(X\to\infty\), we get
\[
\limsup_{X\to\infty}
\frac{\left|\bigcup_{p>Y}U_p'\cap \Omega_X\right|}
{|\Omega_X|}
\leq
\sum_{p>Y}1/p^W.
\]
Since \(W>1\), this tends to \(0\) as \(Y\to\infty\). Hence Theorem
\ref{thm:density} applies and gives
\[
\mathfrak d^{\operatorname{vis}}
=
\prod_p(1-1/p^W)
=
1/\zeta(W).
\]
This example specializes to classical visibility when $w_1=\cdots=w_n=1$. It specializes to generalized lines of sight in the plane when $n=2, w_1=1$ and $w_2=b$.
\medskip

\noindent\textbf{(3) A non-example.} 
Consider the family $\phi_m(t)=(t^2,mt^2)$ where $m\in\Q_{>0}$. This is a positive-weight homogeneous family with weights $w_1=w_2=2$.
Since $\gcd(w_1,w_2)=2$, hypothesis $\gcd(w_1,w_2)=1$ of Theorem \ref{thm:full-lattice-density} is not satisfied.
It is easy to see that $\cB(\cY)=\Z_{\geq 1}^2$.

However, the following naive local condition suggested by the proof of Theorem \ref{thm:full-lattice-density}, $p^2\mid x$ and $p^2\mid y$, is not the correct condition. For example, the point $P=(p,p)$
lies on the branch \(m=1\) with parameter \(t=\sqrt p\). On the same branch,
the point $Q=(1,1)$
occurs earlier, with parameter \(t=1\). Hence \(P\) is not globally visible.
Moreover, $v_p(P)=1$ and $v_p(Q)=0$ so \(P\) is not \(p\)-adically visible. Hence, the above condition does not detect this failure. In this example, $(x,y)$ is visible if and only if $\gcd(x,y)=1$.
Consequently, $\mathfrak d^{\operatorname{vis}}(\cY)
        =
        \frac1{\zeta(2)}$ and not $\frac1{\zeta(4)}$.
Thus, condition \(\gcd(w_1,\dots,w_n)=1\) cannot be omitted in general from the density
formula with exponent \(W=w_1+\cdots+w_n\) in Theorem \ref{thm:full-lattice-density}.

\subsection{Weighted homogeneous families and weighted projective stacks}
\label{subsec:weighted-projective}
We now give a geometric perspective on visibility questions for weighted homogeneous families and explain the relationship between visibility and weighted projective stacks.
We work over \(\Q\).  For background on weighted projective stacks and
their arithmetic, see
\cite{DardaWeightedProjectiveStacks,ChanLoughranRome}.

Let \(\mathbf w=(w_1,\ldots,w_n)\in\Z_{>0}^n\).  The
\emph{weighted projective stack} of weights \(\mathbf w\) is the quotient
stack
\[
 \mathscr P(\mathbf w)
 :=
 \bigl[(\A^n\setminus\{0\})/\Gm\bigr],
\]
where $\lambda\cdot(x_1,\ldots,x_n)
 =
 (\lambda^{w_1}x_1,\ldots,\lambda^{w_n}x_n)$. The coarse moduli space of \(\mathscr P(\mathbf w)\) is the usual
weighted projective space $\operatorname{Proj}\Q[X_1,\ldots,X_n]$ where $\deg X_i=w_i$. For a geometric point
\(\mathbf x=(x_1,\ldots,x_n)\in\A^n\setminus\{0\}\), the
\emph{stabilizer} of \(\mathbf x\) under the weighted \(\Gm\)-action is
the subgroup
\[
\operatorname{Stab}_{\Gm}(\mathbf x)
=
\{\lambda\in\Gm:\lambda^{w_i}=1
\text{ for every }i\in J(\mathbf x)\}.
\]
where $J(\mathbf x):=\{i:x_i\neq0\}$. 
\par Let \(k\) be an algebraically closed field of characteristic \(0\), and let \(\mathbf x=(x_1,\ldots,x_n)\in k^n\setminus\{0\}\). Set
\[ h(\mathbf x):=\gcd\{w_i:i\in J(\mathbf x)\}.
\]
Then it is easy to see that the stabilizer of the point represented by \(\mathbf x\) in
\(\mathscr P(\mathbf w)\) is  \(\mu_{h(\mathbf x)}\). Here, $\mu_n$ denotes the group of $n$-th roots of unity. In particular, the
generic stabilizer is
\(\mu_{\gcd(w_1,\ldots,w_n)}\), and the weighted action is effective if
and only if $\gcd(w_1,\ldots,w_n)=1$. Thus, the gcd that occurs naturally in the proof of
Theorem~\ref{thm:local-detectability} has a stack-theoretic meaning: it
is the order of the stabilizer of the corresponding weighted-projective
point.
\par Now, let
\[
 \phi_\alpha(t)
 =
 \bigl(a_1(\alpha)t^{w_1},\ldots,
       a_n(\alpha)t^{w_n}\bigr)
\]
be a weighted homogeneous branch, and let
\(\pi:\A^n\setminus\{0\}\to\mathscr P(\mathbf w)\) be the quotient map.
Then the restriction of \(\pi\circ\phi_\alpha\) to \(\Gm\) is constant:
every nonzero point of the branch represents the same point
\[
 [a_1(\alpha):\cdots:a_n(\alpha)]_{\mathbf w}
 \in\mathscr P(\mathbf w).
\]
For instance, when \(w_1=\cdots=w_n=1\), a ray through the origin maps to a single point of
ordinary projective space, and its first integral point is the primitive
integral representative of that projective point.  The corresponding
notion for general weights is weighted primitivity.

\begin{definition}
For \(\mathbf x=(x_1,\ldots,x_n)\in\Z^n\setminus\{0\}\), define its
\emph{weighted gcd} by
\[
 \operatorname{wgcd}_{\mathbf w}(\mathbf x)
 :=
 \prod_p
 p^{\,\min_i\lfloor v_p(x_i)/w_i\rfloor}.
\]
We say that \(\mathbf x\) is \emph{weighted primitive} if
\(\operatorname{wgcd}_{\mathbf w}(\mathbf x)=1\).
\end{definition}

The relevance of this definition to the weighted projective stack is
that it measures the extent to which an integral representative can be
scaled backwards along its weighted \(\Gm\)-orbit while remaining
integral. Indeed, if \(m^{w_i}\mid x_i\) for every \(i\), then
\[
 \mathbf y=
 \left(\frac{x_1}{m^{w_1}},\ldots,
       \frac{x_n}{m^{w_n}}\right)\in\Z^n
\]
and \(\mathbf x=m\cdot\mathbf y\) under the weighted action, so
\(\mathbf x\) and \(\mathbf y\) determine the same point of
\(\mathscr P(\mathbf w)\).  Thus,
\(\operatorname{wgcd}_{\mathbf w}(\mathbf x)\) is the largest positive
integer that can be removed from \(\mathbf x\) in this way, and the
weighted primitivity means that the weighted nontrivial integral scaling cannot
 be removed.  This is a direct analog to choosing a primitive
integral representative of a rational point of ordinary projective
space.  The weighted gcd and its valuation formula are discussed in
\cite[Section~2, Proposition~1]{BeshajGutierrezShaska}; see also
\cite{DardaWeightedProjectiveStacks} for its relation to heights on
weighted projective stacks.

The following proposition identifies this notion exactly with
visibility.

\begin{proposition}
\label{prop:visibility-weighted-primitive}
Assume the hypotheses of Theorem~\ref{thm:full-lattice-density}. Let
\[
 P=(x_1,\ldots,x_n)=\phi_\alpha(t)
 \in\Z_{\geq1}^n.
\]
For every prime \(p\), the point \(P\) is \(p\)-adically visible precisely
when its weighted gcd is not divisible by \(p\). Consequently, \(P\) is
globally visible precisely when its weighted gcd is equal to \(1\).
\end{proposition}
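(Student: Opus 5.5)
The plan is to reduce everything to the characterization already obtained in the proof of Theorem~\ref{thm:full-lattice-density}. That proof shows that, under hypotheses (i) and (ii), a point \(P=(x_1,\ldots,x_n)=\phi_\alpha(t)\) fails to be \(p\)-adically visible if and only if \(p^{w_i}\mid x_i\) for every \(i\). I will take this equivalence as given. The argument in one direction constructs the earlier point \(\phi_\alpha(t/p)\). The argument in the other direction uses \(\gcd(w_1,\ldots,w_n)=1\) and B\'ezout to force \(d=t/s\in\Q\) with \(v_p(d)\geq 1\).

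The second step is to translate the divisibility condition into the weighted gcd. By definition, the exponent of \(p\) in \(\operatorname{wgcd}_{\mathbf w}(P)\) is \(\min_i\lfloor v_p(x_i)/w_i\rfloor\). Since every \(x_i\geq 1\), all valuations are finite and the minimum is a nonnegative integer. We have \(p\mid\operatorname{wgcd}_{\mathbf w}(P)\) exactly when this minimum is at least \(1\). That holds exactly when \(\lfloor v_p(x_i)/w_i\rfloor\geq 1\) for all \(i\), which is the same as \(v_p(x_i)\geq w_i\), i.e.\ \(p^{w_i}\mid x_i\), for all \(i\). Combining this with the first step shows that \(P\) is \(p\)-adically visible if and only if \(p\nmid\operatorname{wgcd}_{\mathbf w}(P)\).

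For the global statement, I will invoke Theorem~\ref{thm:local-detectability}: the family is positive-weight homogeneous, so \(P\) is globally visible if and only if it is \(p\)-adically visible for every prime \(p\). By the previous step, this holds if and only if no prime divides \(\operatorname{wgcd}_{\mathbf w}(P)\). Since the weighted gcd is a positive integer, a finite product in which only primes dividing all \(x_i\) can appear, this is equivalent to \(\operatorname{wgcd}_{\mathbf w}(P)=1\).

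There is no real obstacle here. The only point needing care is the first step, which depends on \(\gcd(w_i)=1\). Without that hypothesis, non-example (3) shows the divisibility criterion fails, so I will cite the proof of Theorem~\ref{thm:full-lattice-density} explicitly rather than rederive the criterion.
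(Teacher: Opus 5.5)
Your proposal is correct and takes essentially the same route as the paper. The paper also reads off $p\mid\operatorname{wgcd}_{\mathbf w}(P)$ if and only if $p^{w_i}\mid x_i$ for all $i$ from the definition, and cites the characterization established in the proof of Theorem~\ref{thm:full-lattice-density}. It then deduces the global statement from Theorem~\ref{thm:local-detectability}; your extra remarks (finite valuations since $x_i\geq 1$, and that a positive integer with no prime divisor equals $1$) only make explicit what the paper leaves implicit.
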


\begin{proof}
By the definition of the weighted gcd,
\(p\mid\operatorname{wgcd}_{\mathbf w}(P)\) if and only if
\(p^{w_i}\mid x_i\) for every \(i\).  In the proof of
Theorem~\ref{thm:full-lattice-density}, we showed that this is
equivalent to \(P\) failing to be \(p\)-adically visible.  This proves
the first assertion.  The second now follows from
Theorem~\ref{thm:local-detectability}: \(P\) is globally visible if and
only if it is \(p\)-adically visible for every prime \(p\), equivalently
if and only if \(\operatorname{wgcd}_{\mathbf w}(P)=1\).
\end{proof}

Thus Theorem~\ref{thm:local-detectability}, in this setting, can be
viewed as the local--global principle
\[
 \operatorname{wgcd}_{\mathbf w}(P)=1
 \quad\Longleftrightarrow\quad
 p\nmid\operatorname{wgcd}_{\mathbf w}(P)
 \quad\text{for every prime }p.
\]
The arithmetic notion on the right is exactly the
\(p\)-adic visibility condition defined earlier. The density formula also acquires a simple weighted-projective
interpretation.

\begin{corollary}\label{cor:weighted-primitive-density}
Let \(W=w_1+\cdots+w_n\).  Under the hypotheses of
Theorem~\ref{thm:full-lattice-density}, the density of integral
representatives which are weighted primitive at \(p\) is
\(1-p^{-W}\), and the density of weighted-primitive integral
representatives is
\[
 \prod_p(1-p^{-W})=\frac{1}{\zeta(W)}.
\]
\end{corollary}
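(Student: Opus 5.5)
The plan is to reduce Corollary~\ref{cor:weighted-primitive-density} directly to Theorem~\ref{thm:full-lattice-density}, using Proposition~\ref{prop:visibility-weighted-primitive} as the dictionary between the two languages.

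\textbf{Translation.} Under the hypotheses of Theorem~\ref{thm:full-lattice-density}, we have \(\cB(\cY)=\Z_{\geq1}^n\). So the integral representatives in question are exactly the branch points, and densities are taken relative to \(\cB(\cY)\). This is the same normalization used in Theorem~\ref{thm:full-lattice-density}.

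\textbf{Local statement.} Fix a prime \(p\). By definition, \(p\mid\operatorname{wgcd}_{\mathbf w}(\mathbf x)\) if and only if \(\min_i\lfloor v_p(x_i)/w_i\rfloor\geq1\), that is, if and only if \(p^{w_i}\mid x_i\) for every \(i\). Proposition~\ref{prop:visibility-weighted-primitive} identifies the complement of this set with the \(p\)-adically visible locus \(\cB_p^{\vis}(\cY)\). Hence the set of representatives that are weighted primitive at \(p\) coincides with \(\cB_p^{\vis}(\cY)\). Its density is therefore \(\mathfrak d_p^{\vis}(\cY)=1-p^{-W}\) by Theorem~\ref{thm:full-lattice-density}. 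One can also see this directly: the set is defined by congruences modulo \(p^{\max w_i}\), and the excluded residue classes have proportion \(\prod_i p^{-w_i}=p^{-W}\), as in Lemma~\ref{lem:single-local-density}.

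\textbf{Global statement.} By the second part of Proposition~\ref{prop:visibility-weighted-primitive}, a point is weighted primitive exactly when it is globally visible. So the set of weighted-primitive representatives is \(\cB^{\vis}(\cY)\). Theorem~\ref{thm:full-lattice-density} then gives its density as \(1/\zeta(W)=\prod_p(1-p^{-W})\). The Euler product identity holds because \(W\geq n\geq2\), so the product converges absolutely.

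\textbf{Main obstacle.} There is essentially none; the work has all been done in Theorem~\ref{thm:full-lattice-density} and Proposition~\ref{prop:visibility-weighted-primitive}. The only point needing care is the first step: checking that "integral representatives" and the density normalization agree with \(\cB(\cY)=\Z_{\geq1}^n\) and with the height \(H\) of \eqref{defn of H}, so that the cited density statements apply verbatim.
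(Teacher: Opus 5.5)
Your proof is correct, but it takes a different route from the paper's. You pass to visibility language via Proposition~\ref{prop:visibility-weighted-primitive}: weighted primitive at $p$ is equivalent to $p$-adically visible, and weighted primitive is equivalent to globally visible. You then quote both densities from Theorem~\ref{thm:full-lattice-density}.

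The paper never mentions visibility in its proof. It notes that failure of weighted primitivity at $p$ is, by definition, the congruence condition $p^{w_i}\mid x_i$ for all $i$, which reads off the local density $p^{-W}$. It bounds the large-prime tail by $\sum_{p>Y}p^{-W}$ using the union bound. It then applies the sieve statement, Theorem~\ref{thm:density}, directly.

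\textbf{What your route buys.} It is shorter given what has already been proved.

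\textbf{What it costs.} It inherits the hypothesis $\gcd(w_1,\dots,w_n)=1$ through the dictionary, and the dictionary genuinely breaks without it. In the non-example with weights $(2,2)$, visibility means $\gcd(x,y)=1$, whereas $(p,p)$ is weighted primitive but not visible.

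\textbf{What the paper's route buys.} It shows that the density of weighted-primitive points in $\Z_{\geq1}^n$ is $1/\zeta(W)$ for arbitrary positive weights with $W>1$, with no visibility datum involved. The hypotheses of Theorem~\ref{thm:full-lattice-density} are needed only to read this count as a visibility density.

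The underlying computations are the same in both cases; yours are simply carried out inside the proof of Theorem~\ref{thm:full-lattice-density} rather than redone.
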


\begin{proof}
Failure of weighted primitivity at \(p\) means precisely
\(p^{w_i}\mid x_i\) for all \(i\).  Hence its local density is
\(p^{-W}\).  Moreover, for the large-prime tail the union bound gives
an upper bound \(\sum_{p>Y}p^{-W}\), which tends to zero since
\(W>1\). Applying Theorem~\ref{thm:density} gives the asserted Euler
product.
\end{proof}

Notice that \(W\), rather than the ordinary dimension \(n\), is the
relevant exponent because weighted multiplication by \(p\) has lattice
index \(p^W\).  For \(\mathbf w=(1,\ldots,1)\), this reduces to the
usual density \(1/\zeta(n)\) of primitive lattice points.

\section{Sparse families}\label{sec:sparse}

In the preceding section, we considered positive-weight homogeneous families whose branch locus is the whole positive lattice.  In that setting, the density problem can be treated by imposing congruence conditions directly on the ambient lattice.  We now turn to the opposite situation. Recall that we call a visibility datum \(\cY\) \emph{sparse} when the Zariski closure of \(\cB(\cY)\) has a dimension strictly smaller than the dimension of the ambient affine space. We first review the form of Davenport's lattice-point estimate, which we shall use.

\begin{lemma}[Davenport's lemma]\label{lem:davenport}
Let \(\mathcal R\subseteq\R^n\) be a bounded region.  Suppose that there is an integer \(h\geq1\) such that every intersection of \(\mathcal R\) with a line parallel to a coordinate axis is a union of at most \(h\) intervals, and that the same property holds for every coordinate projection of \(\mathcal R\).  For \(1\leq j\leq n-1\), let \(V_j(\mathcal R)\) denote the sum of the \(j\)-dimensional volumes of the orthogonal projections of \(\mathcal R\) onto the \(j\)-dimensional coordinate subspaces of \(\R^n\).  Then
\[
 \#(\mathcal R\cap\Z^n)
 =
 \operatorname{vol}_n(\mathcal R)
 +
 O_{n,h}\left(1+\sum_{j=1}^{n-1}V_j(\mathcal R)\right).
\]
\end{lemma}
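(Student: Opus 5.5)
The count is organized by fibering along one coordinate direction and summing one-dimensional interval counts; the error terms then sum, by induction, to projection volumes. Throughout, we allow the implied constants to depend on \(n\) and \(h\) only.

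We argue by induction on \(n\).

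\textbf{Base case \(n=1\).} Here \(\mathcal R\) is a union of at most \(h\) intervals. A single interval \(I\) contains \(\operatorname{vol}_1(I)+O(1)\) integers, so
\[
\#(\mathcal R\cap\Z)=\operatorname{vol}_1(\mathcal R)+O(h).
\]
This is the claim, with the sum over \(j\) empty.

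\textbf{Inductive step.} Assume the result in dimension \(n-1\). Let \(\pi:\R^n\to\R^{n-1}\) forget the last coordinate, and for \(y\in\R^{n-1}\) write \(\mathcal R_y=\{x_n:(y,x_n)\in\mathcal R\}\). Each fiber \(\mathcal R_y\) is a union of at most \(h\) intervals, so
\[
\#(\mathcal R\cap\Z^n)=\sum_{y\in\pi(\mathcal R)\cap\Z^{n-1}}\bigl(\operatorname{vol}_1(\mathcal R_y)+O(h)\bigr).
\]
The error term is \(O\bigl(h\,\#(\pi(\mathcal R)\cap\Z^{n-1})\bigr)\). The projection \(\pi(\mathcal R)\) inherits the hypotheses of the lemma, because coordinate projections of \(\pi(\mathcal R)\) are coordinate projections of \(\mathcal R\). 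By induction, \(\#(\pi(\mathcal R)\cap\Z^{n-1})\) is \(\operatorname{vol}_{n-1}(\pi\mathcal R)\) plus lower projection volumes. All of these are bounded by \(1+\sum_j V_j(\mathcal R)\).

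\textbf{Main term.} Set \(f(y)=\operatorname{vol}_1(\mathcal R_y)\). It remains to compare \(\sum_{y\in\Z^{n-1}}f(y)\) with \(\int f=\operatorname{vol}_n(\mathcal R)\). We do this by Fubini in the other orientation: write
\[
\sum_{y\in\Z^{n-1}} f(y)=\int_{\R}\#\{y\in\Z^{n-1}:(y,x_n)\in\mathcal R\}\,dx_n.
\]
For each fixed \(x_n\), apply the induction hypothesis to the slice \(\mathcal R^{x_n}\subseteq\R^{n-1}\). This gives \(\operatorname{vol}_{n-1}(\mathcal R^{x_n})\) plus an error bounded by \(1\) plus the projection volumes of the slice. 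Integrating the main part over \(x_n\) returns \(\operatorname{vol}_n(\mathcal R)\).

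When we integrate the slice errors over \(x_n\), a \(j\)-dimensional projection of a slice, integrated in \(x_n\), is at most the \((j+1)\)-dimensional projection volume of \(\mathcal R\) onto the coordinates that include \(x_n\). The constant \(1\), integrated over \(x_n\), gives the length of the projection of \(\mathcal R\) onto the \(x_n\)-axis, which is a \(V_1\) term.

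\textbf{Expected obstacle.} The main difficulty is the bookkeeping in this double-fibering. The integrated slice errors must land on projections of dimension at most \(n-1\). The top case \(j=n-1\) for the slice, integrated over \(x_n\), would produce an \(n\)-dimensional quantity, so it has to be avoided. I expect the fix is to interchange the roles: use the slice expansion only for the \(O(h)\) boundary counts, and handle the main term via the fibers \(\mathcal R_y\) as above. The slices also need to satisfy the interval hypothesis, which holds because their coordinate-parallel lines are lines of \(\mathcal R\). Measurability of \(f\) and of the slice counts is a routine point.
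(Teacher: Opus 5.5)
Your inductive step is the standard proof of Davenport's estimate; the paper itself gives no argument and only cites Davenport. You count along $x_n$-lines with error at most $h\,\#(\pi(\mathcal R)\cap\Z^{n-1})$ and bound that count by the $(n-1)$-dimensional case applied to $\pi(\mathcal R)$. You then write $\sum_{y\in\Z^{n-1}}f(y)=\int\#(\mathcal R^{t}\cap\Z^{n-1})\,dt$, with $\mathcal R^{t}=\{y:(y,t)\in\mathcal R\}$, and apply the $(n-1)$-dimensional case to each slice. As written in your main-term paragraph, this is already a complete argument.

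The obstacle you then anticipate does not exist, so drop the proposed ``interchange of roles''; it would leave the comparison of $\sum_y f(y)$ with $\int f$ unproved. Because $\mathcal R^{t}\subseteq\R^{n-1}$, the induction hypothesis produces error terms $V_j(\mathcal R^{t})$ only for $j\le n-2$. These integrate to $(j+1)$-dimensional projection volumes of $\mathcal R$ with $j+1\le n-1$. The only $(n-1)$-dimensional quantity attached to a slice is $\operatorname{vol}_{n-1}(\mathcal R^{t})$ itself, which is the main term and integrates to exactly $\operatorname{vol}_n(\mathcal R)$.

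What you still need to check is that each slice satisfies the full hypothesis of the lemma, including the condition on its coordinate projections; you only verified its own coordinate lines. This follows from $\pi_S(\mathcal R^{t})=(\pi_{S\cup\{n\}}\mathcal R)^{t}$ for $S\subseteq\{1,\dots,n-1\}$. A coordinate line meets the left side in the same set in which the corresponding coordinate line at height $x_n=t$ meets $\pi_{S\cup\{n\}}\mathcal R$ (which is $\mathcal R$ itself when $S=\{1,\dots,n-1\}$). Hence it meets it in at most $h$ intervals.

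The same identity and Fubini give $\int\operatorname{vol}_{|S|}(\pi_S\mathcal R^{t})\,dt=\operatorname{vol}_{|S|+1}(\pi_{S\cup\{n\}}\mathcal R)$, with equality rather than just an upper bound. The constant term only has to be integrated over those $t$ with $\mathcal R^{t}\neq\varnothing$, i.e.\ over the projection of $\mathcal R$ to the $x_n$-axis. That projection has length equal to one of the summands of $V_1(\mathcal R)$.
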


\begin{proof}
This is Davenport's lattice-point estimate; see \cite{DavenportLipschitz}.  The dependence of the implied constant is only on the dimension and on the uniform bound for the number of intervals occurring in coordinate sections and coordinate projections.
\end{proof}
\subsection{Sparse radial graph families}We introduce the general class of sparse families to which Davenport's lemma will be applied. Fix integers \(r\geq2\) and \(s\geq1\).  For \(1\leq j\leq s\), let \(F_j\in\Z[X_1,\dots,X_r]\) be a nonzero homogeneous polynomial of degree \(\delta_j\geq1\), and assume that all coefficients of \(F_j\)'s are nonnegative.  Consider the polynomial graph map
\begin{equation}\label{Psi defn}
    \Psi(x_1,\dots,x_r)
 =
 \bigl(x_1,\dots,x_r,F_1(x_1,\dots,x_r),\dots,F_s(x_1,\dots,x_r)\bigr).
\end{equation}
For \(\boldsymbol a=(a_2,\dots,a_r)\in\Q_{>0}^{r-1}\), define an ordered branch by setting \[\phi_{\boldsymbol a}(t):=\Psi(t,a_2t,\dots,a_rt)\] for \(t>0\), and let \(\cY_\Psi\) be the collection of all such branches. We refer such a family \(\cY_\Psi\) as a \emph{sparse radial graph family}.

Homogeneity shows that the last \(s\) coordinates of \(\phi_{\boldsymbol a}(t)\) have the form \(F_j(1,a_2,\dots,a_r)t^{\delta_j}\).  Thus this is a positive-weight homogeneous family, with weight one on each of the first \(r\) coordinates and weight \(\delta_j\) on the coordinate determined by \(F_j\).  

We first describe the branch locus.  Suppose that \(\phi_{\boldsymbol a}(t)\) is integral.  Its first coordinate is \(t\), while its next \(r-1\) coordinates are \(a_2t,\dots,a_rt\). Thus the first \(r\) coordinates of $\Psi$ form a vector \(x=(x_1,\dots,x_r)\in\Z_{\geq1}^r\).  Since each \(F_j\) has integral coefficients, all remaining coordinates of $\Psi$ are then equal to the integers \(F_j(x)\).  Conversely, if \(x\in\Z_{\geq1}^r\), taking \(t=x_1\) and \(a_i=x_i/x_1\) produces the branch point \(\Psi(x)\).  We therefore have
\[
 \cB(\cY_\Psi)
 =
 \{\Psi(x):x\in\Z_{\geq1}^r\}.
\]
Moreover, the first \(r\) coordinates determine the branch uniquely, since \(a_i=x_i/x_1\) for \(2\leq i\leq r\).  Hence two distinct branches do not intersect away from the origin.

The branch locus lies on the graph cut out by the equations \(Y_{r+j}=F_j(Y_1,\dots,Y_r)\) for \(1\leq j\leq s\).  This graph is isomorphic to \(\A^r\), and hence has dimension \(r\), whereas the ambient affine space has dimension \(r+s\).  Thus every family \(\cY_\Psi\) constructed in this way is sparse.

We shall use the standard max-height on the ambient affine space.  For \(X\geq1\), let \(\mathcal R_X\) denote the set of \(u=(u_1,\dots,u_r)\in[1,\infty)^r\) for which \(H(\Psi(u))\leq X\), and put \(V(X):=\operatorname{vol}_r(\mathcal R_X)\).  Since the first \(r\) coordinates of \(\Psi(u)\) are \(u_1,\dots,u_r\) themselves, membership in \(\mathcal R_X\) implies \(u_i\leq X\) for every \(i\).  In particular, \(\mathcal R_X\subseteq[1,X]^r\), so \(\mathcal R_X\) is bounded.  This verifies the boundedness hypothesis in Davenport's lemma.
\par For a point \(P=(P_1,\dots,P_n)\in X(\Z)\), recall from \eqref{defn of H} that
\[
        H(P):=\max_{1\leq i\leq n}|P_i|.
\]
A subset \(\mathcal R\subseteq\R^n\) is called
\emph{semialgebraic} if it can be obtained from finitely many sets of
the form $\{x\in\R^n:f(x)=0\}$ and $\{x\in\R^n:g(x)>0\}$, where \(f,g\in\R[X_1,\dots,X_n]\), by taking finitely many unions,
intersections, and complements.

\begin{proposition}
\label{prop:Davenport-admissible}
Let \(r\geq 2\) and \(s\geq 1\), and let
\(F_1,\dots,F_s\in\Z[X_1,\dots,X_r]\) be homogeneous polynomials whose
coefficients are all nonnegative. Consider the associated map
\[
        \Psi(u_1,\dots,u_r)
        =
        \bigl(
        u_1,\dots,u_r,
        F_1(u_1,\dots,u_r),\dots,
        F_s(u_1,\dots,u_r)
        \bigr),
\]
as in \eqref{Psi defn}. For \(X\geq 1\), let
\[
        \mathcal R_X
        :=
        \left\{
        u\in[1,\infty)^r:
        H(\Psi(u))\leq X
        \right\}.
\]
Then the following assertions hold.
\begin{enumerate}[label=\textnormal{(\roman*)}]
    \item The region \(\mathcal R_X\) is bounded and semialgebraic.
    \item Every intersection of \(\mathcal R_X\) with a line parallel
    to a coordinate axis is either empty or a single interval. Further, every coordinate projection of \(\mathcal R_X\) has the same
    property: its intersection with a line parallel to one of its
    coordinate axes is either empty or a single interval.
\end{enumerate}
Consequently, the hypotheses of Davenport's lemma are
    satisfied for \(\mathcal R_X\) with \(h=1\), uniformly in \(X\).
\end{proposition}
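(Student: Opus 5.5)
The whole argument rests on monotonicity. Because every \(F_j\) is homogeneous of degree \(\delta_j\geq 1\) with nonnegative integer coefficients, each \(F_j\) is nondecreasing in every variable on \([1,\infty)^r\); indeed every monomial is a product of nonnegative powers of positive coordinates. On \([1,\infty)^r\) all coordinates of \(\Psi(u)\) are positive, so
\[
 H(\Psi(u))=\max\Bigl(\max_i u_i,\ \max_j F_j(u)\Bigr).
\]
Hence \(\mathcal R_X\) is the intersection of \([1,\infty)^r\) with the sublevel sets \(\{u_i\leq X\}\) and \(\{F_j(u)\leq X\}\). This immediately gives (i). Boundedness was already noted: \(\mathcal R_X\subseteq[1,X]^r\). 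Semialgebraicity holds because the set is a finite intersection of sets of the form \(\{g\geq 0\}\) with \(g\) polynomial, namely \(u_i-1\geq0\), \(X-u_i\geq 0\), and \(X-F_j(u)\geq0\). Each such set is the complement of \(\{-g>0\}\).

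For the first half of (ii), fix a line parallel to the \(e_k\)-axis. Along it only \(u_k\) varies, and the restriction of every defining function \(u_i\) and \(F_j\) is a nondecreasing function of \(u_k\) on \([1,\infty)\). So each condition \(F_j\leq X\) or \(u_i\leq X\) cuts out a down-closed subset of \([1,\infty)\) in the variable \(u_k\). By continuity, such a set is empty or a closed interval \([1,c]\), and the condition \(u_k\geq 1\) contributes \([1,\infty)\). An intersection of intervals is an interval, possibly empty.

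For the projections, the key step is to show that \(\mathcal R_X\) is a \emph{down-set} in \([1,\infty)^r\) for the coordinatewise order: if \(u\in\mathcal R_X\) and \(1\leq u'\leq u\) coordinatewise, then \(u'\in\mathcal R_X\). This holds by monotonicity of each \(F_j\). Now let \(\pi\) be the projection onto a set \(S\) of coordinates. I claim
\[
 \pi(\mathcal R_X)=\{v\in[1,\infty)^S : (v,\mathbf 1)\in\mathcal R_X\},
\]
where \(\mathbf 1\) fills the omitted coordinates with \(1\). One inclusion is trivial. For the other, given \(u\in\mathcal R_X\) with \(\pi(u)=v\), lower the omitted coordinates to \(1\) and use the down-set property. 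The right-hand side is again a down-set cut out by nondecreasing continuous functions of the coordinates in \(S\), namely \(v_i\) and \(F_j(v,\mathbf 1)\). So the same one-variable argument shows that its sections by lines parallel to coordinate axes are intervals or empty.

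The final assertion then follows at once. All coordinate sections and coordinate projections are single intervals, so Lemma~\ref{lem:davenport} applies with \(h=1\), independently of \(X\). The only step needing care is the projection claim, specifically the identification via the down-set property. Once that is in place, everything reduces to monotonicity of polynomials with nonnegative coefficients on \([1,\infty)^r\).
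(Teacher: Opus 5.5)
Your proof is correct and follows essentially the same route as the paper. Nonnegativity of the coefficients makes $\mathcal R_X$ coordinatewise downward closed in $[1,\infty)^r$, which gives the single-interval property for axis-parallel sections and passes to coordinate projections. The only cosmetic difference is how you handle projections: you identify each one with the slice $\{v:(v,\mathbf 1)\in\mathcal R_X\}$, whereas the paper checks directly that the projection is again downward closed and reruns the convexity argument.
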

\begin{proof}
Let $\mathbf{u}=(u_1, \cdots, u_r)$. Since the coordinates of \(\Psi(\mathbf u)\) are nonnegative on
\(\mathbb R_{\geq 0}^r\), we have
\[
R_X
=
\left\{
\mathbf u\in[1,X]^r:
F_j(\mathbf u)\leq X
\text{ for }1\leq j\leq s
\right\}.
\]
This implies that \(\mathcal R_X\) is described
by finitely many polynomial inequalities.
Thus, \(R_X\) is bounded and semialgebraic.

 Every monomial that occurs in \(F_j\) is
nondecreasing in each variable in \(\R_{\geq0}^r\).  
Consequently, if \(\mathbf u\in R_X\) and
\[
1\leq v_i\leq u_i
\qquad (1\leq i\leq r),
\]
then \(\mathbf v\in R_X\). 
Hence \(\mathcal R_X\) is \emph{coordinatewise downward closed} in $[0,\infty)^r$.

\par Fix an index \(i\), and fix values of all
coordinates other than the \(i\)-th coordinate.  Consider the line
parallel to the \(i\)-th coordinate axis obtained by allowing only the
\(i\)-th coordinate to vary.  Suppose that two points on this line,
with $x_i=a$ and $y_i=b$, belong to
\(\mathcal R_X\), where \(1\leq a<b\). If \(c\) satisfies
\(a\leq c\leq b\), then the point obtained by replacing the
\(i\)-th coordinate \(b\) by \(c\) is coordinatewise no larger than
the point with \(i\)-th coordinate \(b\).  By the downward-closed
property, this intermediate point also belongs to \(\mathcal R_X\). It follows that the intersection of \(\mathcal R_X\) with any line
parallel to a coordinate axis is convex as a subset of that line.
Since a convex subset of a line is an interval, every such
intersection is either empty or a single interval.

It remains to verify the corresponding assertion for coordinate
projections.  Let \(I\subseteq\{1,\dots,r\}\), and let
\(\pi_I:\R^r\to\R^I\) be the corresponding coordinate projection.  We
claim that \(\pi_I(\mathcal R_X)\) is again coordinatewise downward
closed. Indeed, let \(y=(y_i)_{i\in I}\in\pi_I(\mathcal R_X)\). This implies that there exists \(u\in\mathcal R_X\) whose coordinates
indexed by \(I\) are \(y_i\).  Suppose now that
\(z=(z_i)_{i\in I}\) satisfies \(1\leq z_i\leq y_i\) for every
\(i\in I\).  Define \(v\in[1,\infty)^r\) by replacing, for each
\(i\in I\), the coordinate \(u_i=y_i\) by \(z_i\) and leaving all
coordinates outside \(I\) unchanged. Then \(v\) is coordinate wise \(\le u\).  Since \(u\in\mathcal R_X\), the downward-closed
property gives \(v\in\mathcal R_X\).  Its projection is \(z\), and
therefore \(z\in\pi_I(\mathcal R_X)\).

Thus, every coordinate projection of \(\mathcal R_X\) is itself
coordinate wise downward closed.  Repeating the preceding line-section
argument inside the projected coordinate space shows that the
intersection of every coordinate projection with a line parallel to
one of its coordinate axes is either empty or a single interval.
Therefore, Davenport's lemma applies to \(R_X\) with \(h=1\), uniformly in
\(X\).
\end{proof}
For a subset \(\Omega\subseteq\cB(\cY)\), we shall henceforth write
\[
        \Omega_X
        :=
        \{P\in\Omega:H(P)\leq X\}.
\]
Thus, whenever the limit exists, its relative density in the branch
locus is
\[
        \mathfrak d_{\cB(\cY)}(\Omega)
        :=
        \lim_{X\to\infty}
        \frac{\#\Omega_X}{\#\cB(\cY;X)}.
\]
Define
\[
        D(X)
        :=
        \sup_{u=(u_1,\dots,u_r)\in\mathcal R_X}
        \min_{1\leq i\leq r}u_i.
\]
Thus \(D(X)\) measures the largest possible size of the smallest
coordinate of a point in \(\mathcal R_X\).  In particular, if
\(x\in\mathcal R_X\cap\Z^r\) and an integer \(d\) divides every
coordinate of \(x\), then \(d\leq D(X)\). We can now state the general result.

\begin{theorem}
\label{thm:sparse-Davenport}
Let \(\cY_\Psi\) be the sparse radial graph family associated to
homogeneous polynomials \(F_1,\dots,F_s\) with nonnegative integral
coefficients as above. Assume that \(V(X)\to\infty\) and
\(D(X)\to\infty\), and suppose that
\begin{equation}\label{eq:sparse-error-hypothesis}
 D(X)
 +V_1(\mathcal R_X)\log\!\bigl(2D(X)\bigr)
 +\sum_{j=2}^{r-1}V_j(\mathcal R_X)
 =
 o\bigl(V(X)\bigr),
\end{equation}
where the final sum is omitted when \(r=2\).

Then a branch point \(\Psi(x_1,\dots,x_r)\) is globally visible if and
only if
\(\gcd(x_1,\dots,x_r)=1\). For every prime \(p\), it is
\(p\)-adically visible if and only if \(p\) does not divide all of
\(x_1,\dots,x_r\). In particular, visibility is locally detectable.

Moreover,
\begin{equation}\label{eq:sparse-counting-asymptotics}
 \#\cB(\cY_\Psi;X)
 =
 V(X)+o(V(X))
 \quad\text{and}\quad
 \#\cB^{\operatorname{vis}}(\cY_\Psi;X)
 =
 \frac{V(X)}{\zeta(r)}+o(V(X)).
\end{equation}
For every prime \(p\),
\[
 \mathfrak d_p^{\operatorname{vis}}(\cY_\Psi)
 =
 1-\frac1{p^r},
\]
while
\[
 \mathfrak d^{\operatorname{vis}}(\cY_\Psi)
 =
 \frac1{\zeta(r)}.
\]
Consequently,
\[
 \mathfrak d^{\operatorname{vis}}(\cY_\Psi)
 =
 \prod_p
 \mathfrak d_p^{\operatorname{vis}}(\cY_\Psi).
\]
\end{theorem}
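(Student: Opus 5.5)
The proof splits into three steps: the visibility criterion, the asymptotic counts via Davenport and Möbius inversion, and the local densities and product formula.

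\textbf{Visibility criterion.} Identify $\cB(\cY_\Psi)$ with $\Z_{\geq1}^r$ via $x\mapsto\Psi(x)$, and note that $H(\Psi(x))\leq X$ exactly when $x\in\mathcal R_X\cap\Z^r$. The family is positive-weight homogeneous with weights $(1,\dots,1,\delta_1,\dots,\delta_s)$, so Theorem~\ref{thm:local-detectability} gives local detectability directly.

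For the explicit criterion, run the argument from the proof of Theorem~\ref{thm:full-lattice-density}. Let $x$ lie on branch $\boldsymbol a$ with parameter $t=x_1$. An earlier integral point $\Psi(y)$ on the same branch has $y=x/d$ with $d=t/s>1$, and $y_1\in\Z$ together with $y_i=a_iy_1$ forces $d=x_1/y_1\in\Q$. Since the first $r$ coordinates carry weight $1$ and $\gcd$ of the weights is $1$, the $p$-adic analysis there shows the following. $\Psi(x)$ fails to be $p$-adically visible if and only if $p\mid x_i$ for all $i\leq r$. For one direction, $\Psi(x/p)$ is earlier and integral, since $F_j$ has integer coefficients, and has smaller valuation because the $F_j$-coordinates scale by $p^{\delta_j}$. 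For the other direction, $v_p(d)\geq1$ is forced. Global visibility then means no prime divides $\gcd(x_1,\dots,x_r)$, that is, $\gcd(x_1,\dots,x_r)=1$.

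\textbf{Counting.} By Proposition~\ref{prop:Davenport-admissible}, Davenport's lemma applies with $h=1$ and gives
\[
\#\cB(\cY_\Psi;X)=V(X)+O\Bigl(1+\sum_{j=1}^{r-1}V_j(\mathcal R_X)\Bigr)=V(X)+o(V(X))
\]
by \eqref{eq:sparse-error-hypothesis}. For primitive points, use Möbius inversion:
\[
\#\cB^{\vis}(\cY_\Psi;X)=\sum_{d\leq D(X)}\mu(d)\,\#\{y\in\Z_{\geq1}^r: dy\in\mathcal R_X\},
\]
where the restriction $d\leq D(X)$ holds because a common divisor of a point of $\mathcal R_X$ is at most $D(X)$. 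The inner count is $\#((d^{-1}\mathcal R_X)\cap\Z^r)$. The scaled region $d^{-1}\mathcal R_X$ is still downward closed, so Davenport applies with $h=1$ and gives
\[
\#\bigl((d^{-1}\mathcal R_X)\cap\Z^r\bigr)=\frac{V(X)}{d^r}+O\Bigl(1+\sum_{j=1}^{r-1}\frac{V_j(\mathcal R_X)}{d^j}\Bigr).
\]
The boundary of $[1,\infty)^r$ scales to $[1/d,\infty)^r$, but points with a zero coordinate are not counted, so this is harmless.

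Summing over $d\leq D(X)$, the main term is
\[
V(X)\Bigl(\frac{1}{\zeta(r)}+O\bigl(D(X)^{1-r}\bigr)\Bigr).
\]
The error terms contribute $D(X)$ from the $1$'s, $V_1\log(2D(X))$ from $j=1$, and $O(V_j)$ for $j\geq2$ since $\sum d^{-j}$ converges. All of these are $o(V(X))$ by \eqref{eq:sparse-error-hypothesis}, which is exactly shaped for this. This gives \eqref{eq:sparse-counting-asymptotics}, and dividing yields $\mathfrak d^{\vis}=1/\zeta(r)$.

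\textbf{Local densities.} For a fixed prime $p$, the non-$p$-visible points are $\{x: p\mid x\}$, counted by the same Davenport estimate with $d=p$. This gives
\[
\frac{V(X)}{p^r}+O_p\Bigl(1+\sum_j V_j\Bigr),
\]
so $\mathfrak d_p^{\vis}=1-p^{-r}$, and the product formula follows since $\prod_p(1-p^{-r})=1/\zeta(r)$.

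The main obstacle is the error bookkeeping in the Möbius sum. One must check that the Davenport constants are uniform in $d$, which holds because $h=1$ and downward closedness are preserved under scaling. One must also check that the projection volumes scale as $V_j(d^{-1}\mathcal R_X)=d^{-j}V_j(\mathcal R_X)$, so that the $j=1$ term produces precisely the $\log D(X)$ factor allowed by the hypothesis.
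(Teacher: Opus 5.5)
Your proposal is correct and follows essentially the same argument as the paper: positive-weight homogeneity for the visibility criteria, Davenport's lemma applied uniformly with $h=1$ to the dilates $d^{-1}\mathcal R_X$, Möbius inversion truncated at $d\le D(X)$, and the same bookkeeping of the $D(X)$, $V_1(\mathcal R_X)\log(2D(X))$ and $V_j(\mathcal R_X)$ error terms. The only minor difference is how you get the gcd criterion: you derive it from local detectability together with the $p$-adic criterion (via $v_p(d)\geq 1$), whereas the paper checks it directly by writing the scaling factor as $a/b$ in lowest terms.
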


\begin{proof}
We first record the visibility criteria.  As observed above,
\(\cY_\Psi\) is a positive-weight homogeneous visibility datum, with
weights $1,\dots,1,\delta_1,\dots,\delta_s$.
Therefore visibility is locally detectable by
Remark~\ref{remark1.2}.

It remains useful for the counting argument to describe global and
\(p\)-adic visibility explicitly.  Let
\(P=\Psi(x_1,\dots,x_r)\).  If
\(g=\gcd(x_1,\dots,x_r)>1\), then
\(\Psi(x_1/g,\dots,x_r/g)\) is an integral point on the same branch
and occurs at the earlier parameter \(t/g\).  Hence \(P\) is not
globally visible.  Conversely, if an earlier integral point occurs on
the same branch, then its first \(r\) coordinates are
\(\lambda x_1,\dots,\lambda x_r\) for some \(0<\lambda<1\).
Writing \(\lambda=a/b\) in lowest terms, integrality implies
\(b\mid x_i\) for every \(i\), and \(b>1\).  Thus \(P\) is globally
visible precisely when
\(\gcd(x_1,\dots,x_r)=1\).

Fix now a prime \(p\), and put
\(m=\min_i v_p(x_i)\).  Since \(F_j\) is homogeneous of degree
\(\delta_j\) and has integral coefficients,
\(v_p(F_j(x))\geq\delta_jm\geq m\).  One of the first \(r\)
coordinates has valuation exactly \(m\), and hence
\[
        v_p\bigl(\Psi(x)\bigr)=m.
\]
If \(p\mid x_i\) for every \(i\), division of all the \(x_i\) by
\(p\) gives an earlier integral point on the same branch whose
\(p\)-adic minimum valuation is \(m-1\).  If \(p\) does not divide all
of the \(x_i\), then \(m=0\), and no integral point can have smaller
\(p\)-adic minimum valuation.  Thus \(P\) is \(p\)-adically visible
exactly when \(p\nmid\gcd(x_1,\dots,x_r)\).

We now turn to the counting argument.  By the description of the
branch locus, the map \(x\mapsto\Psi(x)\) gives a bijection
\[
        \mathcal R_X\cap\Z^r
        \longrightarrow
        \cB(\cY_\Psi;X).
\]
By Proposition~\ref{prop:Davenport-admissible}, the region
\(\mathcal R_X\) satisfies the hypotheses of Davenport's lemma with
\(h=1\).  Hence
\begin{equation}\label{eq:sparse-total-count}
 \#\cB(\cY_\Psi;X)
 =
 V(X)
 +
 O_r\left(
 1+\sum_{j=1}^{r-1}V_j(\mathcal R_X)
 \right).
\end{equation}
Condition~\eqref{eq:sparse-error-hypothesis} implies that
\(V_j(\mathcal R_X)=o(V(X))\) for every \(1\leq j\leq r-1\).
For \(j\geq2\) this is immediate.  For \(j=1\), it follows from
\(D(X)\to\infty\), since
\(\log(2D(X))\to\infty\).  As \(V(X)\to\infty\), the constant term is
also \(o(V(X))\).  Therefore
\[
        \#\cB(\cY_\Psi;X)
        =
        V(X)+o(V(X)),
\]
which proves the first assertion of
\eqref{eq:sparse-counting-asymptotics}.

For \(d\geq1\), let \(N_d(X)\) be the number of
\(x=(x_1,\dots,x_r)\in\mathcal R_X\cap\Z^r\) such that
\(d\mid x_i\) for every \(i\).  Writing \(x=dy\) gives a bijection
between these points and the integral points of
\(d^{-1}\mathcal R_X\).  Proposition~\ref{prop:Davenport-admissible}
also applies to every such dilated region with the same value \(h=1\).
Since dilation by \(d^{-1}\) multiplies \(r\)-dimensional volume by
\(d^{-r}\) and \(j\)-dimensional coordinate-projection volumes by
\(d^{-j}\), Davenport's lemma gives
\begin{equation}\label{eq:sparse-divisible-count}
 N_d(X)
 =
 \frac{V(X)}{d^r}
 +
 O_r\left(
 1+\sum_{j=1}^{r-1}
 \frac{V_j(\mathcal R_X)}{d^j}
 \right),
\end{equation}
uniformly in \(d\) and \(X\).

We first deduce the local densities.  For a fixed prime \(p\), the
points which fail \(p\)-adic visibility are precisely those counted by
\(N_p(X)\).  Since \(p\) is fixed, condition
\eqref{eq:sparse-error-hypothesis} and
\eqref{eq:sparse-divisible-count} give
\[
        N_p(X)
        =
        \frac{V(X)}{p^r}+o(V(X)).
\]
Together with \eqref{eq:sparse-total-count}, this shows that the
proportion of points which fail \(p\)-adic visibility tends to
\(p^{-r}\).  Therefore
\[
        \mathfrak d_p^{\operatorname{vis}}(\cY_\Psi)
        =
        1-p^{-r}.
\]

For the global count, the visibility criterion and M\"obius inversion
give
\begin{equation}\label{eq:sparse-mobius}
 \#\cB^{\operatorname{vis}}(\cY_\Psi;X)
 =
 \sum_{d\geq1}\mu(d)N_d(X).
\end{equation}
If \(N_d(X)\neq0\), then some
\(x\in\mathcal R_X\cap\Z^r\) has all of its coordinates divisible by
\(d\).  Hence \(d\leq\min_i x_i\leq D(X)\).  Thus the sum in
\eqref{eq:sparse-mobius} may be restricted to \(d\leq D(X)\).

Substituting \eqref{eq:sparse-divisible-count} into
\eqref{eq:sparse-mobius}, the contribution of the main terms is
\[
 V(X)\sum_{d\leq D(X)}\frac{\mu(d)}{d^r}
 =
 \frac{V(X)}{\zeta(r)}
 +
 O_r\left(V(X)D(X)^{1-r}\right),
\]
because \(r\geq2\) and
\(\sum_{d\geq1}\mu(d)d^{-r}=1/\zeta(r)\).

It remains to sum the Davenport errors.  The constant term contributes
\(O(D(X))\).  The \(j=1\) term contributes
\[
 O\left(
 V_1(\mathcal R_X)\log(2D(X))
 \right),
\]
while, for every \(j\geq2\), the convergence of
\(\sum_{d\geq1}d^{-j}\) gives a contribution
\(O(V_j(\mathcal R_X))\).  We therefore obtain
\begin{equation}\label{eq:sparse-visible-count}
 \#\cB^{\operatorname{vis}}(\cY_\Psi;X)
 =
 \frac{V(X)}{\zeta(r)}
 +
 O_r\left(
 V(X)D(X)^{1-r}
 +D(X)
 +V_1(\mathcal R_X)\log(2D(X))
 +\sum_{j=2}^{r-1}V_j(\mathcal R_X)
 \right).
\end{equation}
Since \(D(X)\to\infty\) and \(r\geq2\), the first error term on the
right is \(o(V(X))\).  All the remaining terms are
\(o(V(X))\) by
\eqref{eq:sparse-error-hypothesis}.  This proves the second assertion
of \eqref{eq:sparse-counting-asymptotics}.

Finally, division by the total counting asymptotic gives
\[
        \mathfrak d^{\operatorname{vis}}(\cY_\Psi)
        =
        \frac1{\zeta(r)}.
\]
Together with the local density formula, this yields
\[
        \mathfrak d^{\operatorname{vis}}(\cY_\Psi)
        =
        \prod_p\left(1-\frac1{p^r}\right)
        =
        \prod_p
        \mathfrak d_p^{\operatorname{vis}}(\cY_\Psi).
\]
\end{proof}

\begin{example}\label{ex:sparse-cusp}
The hypothesis \eqref{eq:sparse-error-hypothesis} in Theorem~\ref{thm:sparse-Davenport}
cannot in general be omitted, even for a single homogeneous polynomial
with nonnegative coefficients.  Let \(d\geq2\) and consider
\[
        \Psi(x,y)=(x,y,x^dy).
\]
The associated polynomial \(F(X,Y)=X^dY\) is homogeneous of degree
\(d+1\) and has nonnegative integral coefficients. We now compute the quantities entering
\eqref{eq:sparse-error-hypothesis} explicitly.  Since
\[
        \mathcal R_X
        =
        \left\{
        (x,y)\in[1,\infty)^2:x^dy\leq X
        \right\},
\]
the inequality \(x^dy\leq X\), together with \(y\geq1\), implies
\(1\leq x\leq X^{1/d}\). \begin{figure}[t]
\centering
\begin{tikzpicture}[x=1.1cm,y=.8cm,>=stealth]
  \fill[gray!15]
       (1,1) -- (4.4,1)
       .. controls (3.7,1.05) and (2.5,1.45) .. (1.8,2.5)
       .. controls (1.35,3.3) and (1.12,4.2) .. (1,4.7)
       -- cycle;

  \draw[thick]
       (4.4,1)
       .. controls (3.7,1.05) and (2.5,1.45) .. (1.8,2.5)
       .. controls (1.35,3.3) and (1.12,4.2) .. (1,4.7);

  \draw[->] (.65,.75) -- (4.85,.75) node[right] {$x$};
  \draw[->] (.75,.65) -- (.75,5.15) node[above] {$y$};

  \draw[dashed] (1,.75) -- (1,4.7);
  \draw[dashed] (4.4,.75) -- (4.4,1);
  \draw[dashed] (.75,4.7) -- (1,4.7);

  \node[below] at (1,.75) {$1$};
  \node[below] at (4.4,.75) {$X^{1/d}$};
  \node[left] at (.75,1) {$1$};
  \node[left] at (.75,4.7) {$X$};

  \node at (2.15,1.65) {$\mathcal R_X$};
  \node[rotate=-32] at (2.65,1.8) {$x^dy=X$};
\end{tikzpicture}
\caption{The parameter region for the sparse cusp
\(\Psi(x,y)=(x,y,x^dy)\).  The height condition is \(x^dy\leq X\).
The region has a \(y\)-projection of length comparable with \(X\),
whereas its \(x\)-projection has length only \(X^{1/d}\).}
\label{fig:sparse-cusp-region}
\end{figure} For a fixed
\(x\in[1,X^{1/d}]\), the variable \(y\) therefore ranges over
\[
        1\leq y\leq \frac{X}{x^d}.
\]
Consequently,
\begin{align*}
        V(X)
        &=
        \operatorname{vol}_2(\mathcal R_X)=
        \int_1^{X^{1/d}}
        \left(\frac{X}{x^d}-1\right)\,dx=\frac{X}{d-1}+O_d(X^{1/d})\\&\asymp_dX.
        \end{align*}
        Next, we compute \(V_1(\mathcal R_X)\).  
        Recall that this is the sum
of the lengths of the projections of \(\mathcal R_X\) onto the two
coordinate axes.  The projection onto the \(x\)-axis is
\([1,X^{1/d}]\), and therefore has length \(X^{1/d}-1\).  On the
other hand, taking \(x=1\) in the inequality \(x^dy\leq X\) shows
that every \(y\in[1,X]\) occurs.  Hence the projection onto the
\(y\)-axis is exactly \([1,X]\), and has length \(X-1\).  Thus
\[
        V_1(\mathcal R_X)
        =
        (X^{1/d}-1)+(X-1)
        =
        X+X^{1/d}-2,
\]
so in particular \(V_1(\mathcal R_X)\asymp X\).

For completeness, we also determine \(D(X)\).  If
\((x,y)\in\mathcal R_X\) and
\(T:=\min\{x,y\}\), then \(x\geq T\) and \(y\geq T\), and hence
\[
        X\geq x^dy\geq T^{d+1}.
\]
Thus \(T\leq X^{1/(d+1)}\).  Equality is attained at
\(x=y=X^{1/(d+1)}\), so $D(X)=X^{1/(d+1)}$.

Since \(r=2\), condition
\eqref{eq:sparse-error-hypothesis} reduces in this example to
\[
        D(X)
        +
        V_1(\mathcal R_X)\log\!\bigl(2D(X)\bigr)
        =
        o(V(X)).
\]
Therefore, as $X\rightarrow\infty$,
\begin{align*}
        \frac{
        V_1(\mathcal R_X)\log(2D(X))
        }{V(X)}\sim\bigl(X+X^{1/d}-2\bigr)
        \log\!\bigl(2X^{1/(d+1)}\bigr)\frac{d-1}{X}
        \sim
        \frac{d-1}{d+1}\log X
        \longrightarrow\infty.
\end{align*}
Thus, condition
\eqref{eq:sparse-error-hypothesis} not only fails, but its
\(V_1\)-term is larger than the main volume by an order
\(\log X\).

We now show that the failure of
\eqref{eq:sparse-error-hypothesis} changes the
visibility density.

Recall that the branch locus is
\[
        \cB(\cY)
        =
        \left\{
        (x,y,x^dy):
        x,y\in\Z_{\geq1}
        \right\}.
\]
Since \(x,y\geq1\), one has \(x^dy\geq x\) and \(x^dy\geq y\) and $H((x,y,x^dy))=x^dy$. Hence
\(\cB(\cY;X)\) is in bijection with the pairs
\((x,y)\in\Z_{\geq1}^2\) satisfying \(x^dy\leq X\).

We first count all branch points: 
\begin{equation}\label{eq:sparse-cusp-total-asymptotic}
        \#\cB(\cY;X)
        =
        \sum_{x\leq X^{1/d}}
        \left\lfloor\frac{X}{x^d}\right\rfloor =\zeta(d)X+O_d(X^{1/d}),
\end{equation}
and in particular
\[
        \#\cB(\cY;X)\sim\zeta(d)X.
\]

We next count the globally visible points.  By the visibility criterion
proved above, the point \((x,y,x^dy)\) is globally visible if and only
if $\gcd(x,y)=1$. This gives
\begin{equation}\label{eq:sparse-cusp-visible-sum}
        \#\cB^{\operatorname{vis}}(\cY;X)
        =
        \sum_{x\leq X^{1/d}}
        \#\left\{
        1\leq y\leq\frac{X}{x^d}:
        (x,y)=1
        \right\}.
\end{equation}
Now for fixed \(x\), the number of positive integers \(y\leq Y\) coprime
to \(x\) satisfies
\[
        \#\{1\leq y\leq Y:(x,y)=1\}
        =
        \frac{\varphi(x)}{x}Y
        +
        O(\tau(x)).
\]
This implies
\begin{align*}\label{eq:sparse-cusp-visible-sum}
\#\cB^{\operatorname{vis}}(\cY;X)
        &=
        X
        \sum_{x\leq X^{1/d}}
        \frac{\varphi(x)}{x^{d+1}}
        +
        O\left(
        \sum_{x\leq X^{1/d}}\tau(x)
        \right)\\&= \frac{\zeta(d)}{\zeta(d+1)}X
        +
        o(X),
\end{align*}
where we have used
the standard estimate
\(\sum_{n\leq T}\tau(n)\ll T\log(T)\) and $\sum_{n\geq1}\frac{\varphi(n)}{n^s}
        =
        \frac{\zeta(s-1)}{\zeta(s)}; \  \Re(s)>2.
$
Comparing
\eqref{eq:sparse-cusp-visible-sum} with
\eqref{eq:sparse-cusp-total-asymptotic}, we find
\[
        \mathfrak d^{\operatorname{vis}}(\cY)
        =
        \lim_{X\to\infty}
        \frac{\#\cB^{\operatorname{vis}}(\cY;X)}
             {\#\cB(\cY;X)}
        =
        \frac1{\zeta(d+1)}.
\]

We can also see the same exponent directly from the local densities.
Fix a prime \(p\).  A branch point fails to be \(p\)-adically visible
precisely when
$ p\mid x$ and $
        p\mid y.$
Writing \(x=pu\) and \(y=pv\), the height condition becomes
\[
        x^dy
        =
        p^{d+1}u^dv
        \leq X.
\]
Thus division by \(p\) in both radial coordinates gives a bijection
between the \(p\)-adically invisible points of height at most \(X\)
and the entire branch locus of height at most \(X/p^{d+1}\).  Hence
\[
        \#\bigl(
        \cB(\cY;X)\setminus
        \cB_p^{\operatorname{vis}}(\cY;X)
        \bigr)
        =
        \#\cB\left(\cY;\frac{X}{p^{d+1}}\right).
\]
Using \eqref{eq:sparse-cusp-total-asymptotic}, the proportion of
\(p\)-adically invisible points therefore tends to
\[
        \frac{
        \zeta(d)X/p^{d+1}
        }{
        \zeta(d)X
        }
        =
        \frac1{p^{d+1}}.
\]
Consequently
\[
        \mathfrak d_p^{\operatorname{vis}}(\cY)
        =
        1-\frac1{p^{d+1}}.
\]
Taking the product over all primes gives
\[
        \prod_p
        \mathfrak d_p^{\operatorname{vis}}(\cY)
        =
        \prod_p
        \left(1-\frac1{p^{d+1}}\right)
        =
        \frac1{\zeta(d+1)}
        =
        \mathfrak d^{\operatorname{vis}}(\cY).
\]

Thus visibility is still locally detectable and the global density
still factors as the product of the local densities.  
\end{example}

\begin{example}\label{prop:first-sparse-family}
Consider the family \(\phi_a(t)=(t,at,a^2t^2)\), where \(a\in\Q_{>0}\).  This is precisely the radial graph family associated to \(r=2\) and the homogeneous polynomial \(F_1(X,Y)=Y^2\).  Indeed, the corresponding graph map is \(\Psi(x,y)=(x,y,y^2)\), and \(\Psi(t,at)=\phi_a(t)\).  Theorem~\ref{thm:sparse-Davenport} therefore applies once its counting hypothesis is verified.

\begin{figure}[t]
\centering
\begin{tikzpicture}

\begin{axis}[
    name=plane,
    width=0.42\textwidth,
    height=0.36\textwidth,
    axis lines=middle,
    xmin=0,xmax=3.4,
    ymin=0,ymax=3.4,
    xlabel={$x$},
    ylabel={$y$},
    xtick={1,2,3},
    ytick={1,2,3},
    clip=false
]
    \addplot[thick,domain=0:3,samples=2] {x/3};
    \addplot[thick,domain=0:3,samples=2] {2*x/3};
    \addplot[thick,domain=0:3,samples=2] {x};

    \node at (axis cs:2.7,0.7) {$a=\frac13$};
    \node at (axis cs:2.6,1.9) {$a=\frac23$};
    \node at (axis cs:2.25,2.75) {$a=1$};

    \node[align=center] at (axis cs:1.7,-0.75)
    {rays \(y=ax\) in the parameter plane};
\end{axis}

\begin{axis}[
    at={(plane.east)},
    anchor=west,
    xshift=1.25cm,
    width=0.50\textwidth,
    height=0.39\textwidth,
    view={125}{25},
    axis lines=center,
    xmin=0,xmax=3.3,
    ymin=0,ymax=3.3,
    zmin=0,zmax=9.5,
    xlabel={$x$},
    ylabel={$y$},
    zlabel={$z$},
    xtick={1,2,3},
    ytick={1,2,3},
    ztick={0,4,8},
    clip=false
]
    \addplot3[
        surf,
        opacity=0.18,
        shader=flat,
        domain=0:3.2,
        y domain=0:3,
        samples=13,
        samples y=13
    ]
    {y^2};

    \addplot3[
        thick,
        domain=0:3,
        samples=60
    ]
    ({x},{x/3},{x^2/9});

    \addplot3[
        thick,
        domain=0:3,
        samples=60
    ]
    ({x},{2*x/3},{4*x^2/9});

    \addplot3[
        thick,
        domain=0:3,
        samples=60
    ]
    ({x},{x},{x^2});

    \node at (axis cs:2.8,3.1,8.7) {$z=y^2$};
\end{axis}

\end{tikzpicture}

\caption{The geometry of Example~\ref{prop:first-sparse-family}.
On the left, the branches are determined by the rays \(y=ax\) in
the parameter plane.  Under the graph embedding
\(\Psi(x,y)=(x,y,y^2)\), each ray lifts to the curve
\(\phi_a(t)=(t,at,a^2t^2)\) on the surface \(z=y^2\), shown on the
right.}
\label{fig:first-sparse-family}
\end{figure}

The branch locus is the graph \(\{(x,y,y^2):x,y\in\Z_{\geq1}\}\), which has dimension \(2\) inside \(\A^3\).  In particular, the family is sparse.  The general visibility part of the theorem already shows that \((x,y,y^2)\) is globally visible exactly when \(\gcd(x,y)=1\), and that it is \(p\)-adically visible exactly when \(p\) does not divide both \(x\) and \(y\).  Thus there is no need to establish these criteria separately.

It remains only to verify condition \eqref{eq:sparse-error-hypothesis}.  For \(X\geq1\), the height condition is equivalent to \(1\leq x\leq X\) and \(1\leq y\leq X^{1/2}\).  Hence \(\mathcal R_X\) is a rectangle.  Its area is \((X-1)(X^{1/2}-1)\), so \(V(X)=X^{3/2}+O(X)\).  The two one-dimensional coordinate projections have lengths \(X-1\) and \(X^{1/2}-1\), and therefore \(V_1(\mathcal R_X)=O(X)\).  Finally, the largest possible value of \(\min\{x,y\}\) in the region is \(X^{1/2}\), so \(D(X)=X^{1/2}\).

Consequently the error expression in \eqref{eq:sparse-error-hypothesis} is \(O(X\log X)\), whereas \(V(X)\asymp X^{3/2}\).  Hence the hypothesis of Theorem~\ref{thm:sparse-Davenport} is satisfied.

It follows from Theorem \ref{thm:sparse-Davenport}
\[
 \mathfrak d^{\operatorname{vis}}(\cY)
 =
 \frac1{\zeta(2)},
 \qquad
 \mathfrak d_p^{\operatorname{vis}}(\cY)
 =
 1-\frac1{p^2},
 \qquad
 \mathfrak d^{\operatorname{vis}}(\cY)
 =
 \prod_p\mathfrak d_p^{\operatorname{vis}}(\cY),
\]
and in fact
\[\#\cB^{\operatorname{vis}}(\cY;X)=V(X)/\zeta(2)+O(X\log X).\]
\end{example}
\begin{example}\label{ex:davenport-sparse-family}
Let
\[
 Q(X,Y,Z)=X^2+XY+Y^2+Z^2
 \qquad\text{and}\qquad
 F(X,Y,Z)=X^3+Y^3+Z^3+XYZ.
\]
For \(a,b\in\Q_{>0}\), consider
\[
 \phi_{a,b}(t)
 =
 \bigl(t,at,bt,Q(1,a,b)t^2,F(1,a,b)t^3\bigr),
 \qquad t>0.
\]
Since \(Q\) and \(F\) are homogeneous of degrees \(2\) and \(3\), respectively, this is exactly the radial graph family associated to
\[
 \Psi(x,y,z)
 =
 (x,y,z,Q(x,y,z),F(x,y,z)).
\]

The branch locus is therefore
\[
 \{\Psi(x,y,z):x,y,z\in\Z_{\geq1}\}.
\]
It lies on the codimension-two subvariety of \(\A^5\) defined by the two graph equations \(x_4=Q(x_1,x_2,x_3)\) and \(x_5=F(x_1,x_2,x_3)\).  Theorem~\ref{thm:sparse-Davenport} already gives the visibility criterion: a branch point is globally visible if and only if \(\gcd(x,y,z)=1\), and it is \(p\)-adically visible if and only if \(p\) does not divide all three of \(x,y,z\).

We verify the height and counting hypotheses carefully.  Suppose \(x,y,z\geq1\).  Then \(x^3\geq x^2\), \(y^3\geq y^2\), \(z^3\geq z^2\), and \(xyz\geq xy\).  Adding these inequalities shows that \(F(x,y,z)\geq Q(x,y,z)\).  We also have \(F(x,y,z)\geq x^3\geq x\), and similarly \(F(x,y,z)\geq y,z\).  Thus on the region relevant to the branch lattice the ambient max-height is simply \(F(x,y,z)\).

Consequently,
\[\mathcal R_X=\{(x,y,z)\in[1,\infty)^3: F(x,y,z)\le X\}\]
  It is bounded, since \(F\geq x^3,y^3,z^3\).

For the volume estimate it is convenient to compare \(\mathcal R_X\) with
\[
 \mathcal S_X
 :=
 \{(u,v,w)\in\R_{\geq0}^3:F(u,v,w)\leq X\}.
\]
Homogeneity gives \(\mathcal S_X=X^{1/3}\mathcal S_1\).  If
\(\kappa:=\operatorname{vol}_3(\mathcal S_1)\), it follows that
\(\operatorname{vol}_3(\mathcal S_X)=\kappa X\).

The difference between \(\mathcal S_X\) and \(\mathcal R_X\) is contained in the union of the three slabs in which at least one coordinate is less than \(1\).  Since \(\mathcal S_X\subseteq[0,X^{1/3}]^3\), each such slab has volume \(O(X^{2/3})\).  Hence
\(V(X)=\kappa X+O(X^{2/3})\).

The same comparison gives the required bounds for the coordinate projections.  Every two-dimensional coordinate projection of \(\mathcal R_X\) is contained in the corresponding projection of \(\mathcal S_X=X^{1/3}\mathcal S_1\), and therefore has area \(O(X^{2/3})\).  Likewise every one-dimensional coordinate projection has length \(O(X^{1/3})\).  Thus
\(V_2(\mathcal R_X)=O(X^{2/3})\) and
\(V_1(\mathcal R_X)=O(X^{1/3})\).

It remains to estimate \(D(X)\).  If \(m=\min\{x,y,z\}\), then
\(F(x,y,z)\geq x^3+y^3+z^3\geq3m^3\), so membership in
\(\mathcal R_X\) implies \(m\ll X^{1/3}\).  Conversely, taking \(x=y=z=t\) gives \(F(t,t,t)=4t^3\), so for \(t=(X/4)^{1/3}\) and sufficiently large \(X\) the point \((t,t,t)\) belongs to \(\mathcal R_X\).  Hence \(D(X)\asymp X^{1/3}\).

The error expression in \eqref{eq:sparse-error-hypothesis} is now
\(O(X^{2/3}+X^{1/3}\log X)\), which is \(o(X)\).  Since
\(V(X)\sim\kappa X\), all the hypotheses of
Theorem~\ref{thm:sparse-Davenport} are satisfied.

We therefore obtain
\[
 \#\cB(\cY;X)
 =
 \kappa X+O(X^{2/3}),
 \quad\text{and}\quad
 \#\cB^{\operatorname{vis}}(\cY;X)
 =
 \frac{\kappa}{\zeta(3)}X+O(X^{2/3}).
\]
In particular,
\[
 \mathfrak d^{\operatorname{vis}}(\cY)
 =
 \frac1{\zeta(3)},
 \qquad
 \mathfrak d_p^{\operatorname{vis}}(\cY)
 =
 1-\frac1{p^3},
 \qquad
 \mathfrak d^{\operatorname{vis}}(\cY)
 =
 \prod_p\mathfrak d_p^{\operatorname{vis}}(\cY).
\]
\end{example}
\subsection{Rational rays on an affine cone}
\label{subsec:homogeneous-cones}

The sparse families considered above were obtained from polynomial graph
maps of the form
\[
        \Psi(x_1,\dots,x_r)
        =
        \bigl(
        x_1,\dots,x_r,
        F_1(x),\dots,F_s(x)
        \bigr).
\]
In particular, the parameters \(x_1,\dots,x_r\) themselves occurred as coordinates, and the remaining coordinates were polynomial
functions of them. We consider a natural family which does not fit into the above framework. Let \(V\subseteq\A^n\) be an affine variety defined over \(\Q\) by
homogeneous equations.  Thus, if \(v\in V(\R)\) and \(t\in\R\), then
\(tv\in V(\R)\).  Suppose that \(V(\Q)\cap\Q_{>0}^n\) is nonempty.
For every positive rational ray
\[
        [v]\in
        \bigl(V(\Q)\cap\Q_{>0}^n\bigr)/\Q_{>0},
\]
choose a representative \(v\), and define the corresponding ordered
branch by \(\phi_{[v]}(t)=tv\) for \(t>0\).  Replacing \(v\) by a
positive rational multiple merely gives an order-preserving
reparametrization of the same branch.  We denote the resulting
visibility datum by \(\cY_V\).

Distinct rays meet only at the origin, and every positive integral
point of \(V\) lies on its rational ray.  Consequently the branch locus
is
\[
        \cB(\cY_V)
        =
        V(\Z)\cap\Z_{>0}^n.
\]

The following proposition gives the local-global principle in this
setting.

\begin{figure}[t]
\centering
\begin{tikzpicture}[x=1cm,y=1cm,>=stealth]
  \coordinate (O) at (0,0);

  \draw[thick] (-4,3.2) -- (O) -- (4,3.2);
  \draw[thick]
        (-4,3.2) .. controls (-2,3.75) and (2,3.75) .. (4,3.2);

  \draw[gray] (O) -- (-2.7,3.45);
  \draw[gray] (O) -- (2.8,3.43);
  \draw[thick] (O) -- (1.8,3.55);

  \fill (O) circle (1.7pt) node[below] {$0$};
  \fill (.45,.8875) circle (2pt) node[left] {$Q$};
  \fill (.9,1.775) circle (2pt) node[left] {$2Q$};
  \fill (1.35,2.6625) circle (2pt) node[left] {$3Q$};

  \node[align=center] at (2.85,2.15)
       {a rational\\ray};
  \draw[->] (2.45,2.05) -- (1.65,2.95);
\end{tikzpicture}
\caption{Rational-ray visibility on an affine cone.  On a rational ray
the primitive integral point \(Q\) is the first lattice point, while
\(2Q,3Q,\ldots\) occur later and are globally invisible.  Divisibility
of all ambient coordinates detects the same obstruction prime by
prime.}
\label{fig:cone-rays}
\end{figure}

\begin{proposition}\label{prop:cone-local-global}
Let \(V\subseteq\A^n\) be a homogeneous affine variety as above, and
let \(P=(P_1,\dots,P_n)\in\cB(\cY_V)\).  Then \(P\) is globally visible
if and only if
\(\gcd(P_1,\dots,P_n)=1\).  For every prime \(p\), the point \(P\) is
\(p\)-adically visible if and only if \(p\) does not divide all of the
coordinates \(P_i\).  In particular, visibility is locally detectable
for \(\cY_V\).
\end{proposition}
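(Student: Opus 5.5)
The plan is to treat \(\cY_V\) as a positive-weight homogeneous family with all weights equal to \(1\). Then the local-detectability assertion follows from Theorem~\ref{thm:local-detectability}, and what remains is to make the two criteria explicit. Indeed, for \(\phi_{[v]}(t)=tv\) we have \(\phi_{[v]}(dt)=d\,\phi_{[v]}(t)\) for all \(d>0\). Since \(V\) is cut out by homogeneous equations, every \(tv\) lies in \(V(\R)\). The datum conditions of Definition~\ref{def:visibility-datum} also hold: \(tv=0\) forces \(t=0\) because \(v\neq 0\), and distinct positive rational rays meet only at the origin. So Theorem~\ref{thm:local-detectability} applies directly and gives \(\cB^{\vis}(\cY_V)=\bigcap_p\cB_p^{\vis}(\cY_V)\).

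For the global criterion, let \(P\in\cB(\cY_V)\). We may reparametrize so that \(P=\phi(1)\) with \(v=P\); this is harmless since reparametrization preserves order. The earlier points on the branch are \(\lambda P\) with \(0<\lambda<1\).
\begin{itemize}
\item If \(g=\gcd(P_i)>1\), then \(P/g\) is integral, positive, lies on \(V\) by homogeneity, and sits at the earlier parameter \(1/g\). Hence \(P\) is not visible.
\item Conversely, suppose \(\lambda P\in\Z^n\) with \(0<\lambda<1\). Pick an index with \(P_i\neq 0\); then \(\lambda=(\lambda P_i)/P_i\) is rational. Write \(\lambda=a/b\) in lowest terms with \(b>1\). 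Integrality of \(aP_i/b\) and \(\gcd(a,b)=1\) give \(b\mid P_i\) for every \(i\), so \(\gcd(P_i)>1\).
\end{itemize}

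For the \(p\)-adic criterion, fix a prime \(p\).
\begin{itemize}
\item If \(p\) divides every \(P_i\), then \(P/p\) is an earlier integral point on the branch with \(v_p(P/p)=v_p(P)-1\). Hence \(P\) is not \(p\)-adically visible.
\item If \(p\) does not divide some \(P_i\), then \(v_p(P)=0\). No integral point has negative minimum valuation, so the defining inequality \(v_p(P)\le v_p(Q)\) holds for every earlier integral \(Q\), and \(P\) is \(p\)-adically visible.
\end{itemize}

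Putting the two criteria together, \(\gcd(P_i)=1\) holds exactly when no prime divides all the coordinates. This recovers the local detectability directly, independently of Theorem~\ref{thm:local-detectability}.

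I do not expect a real obstacle. The only point needing care is that the earlier points \(\lambda P\) are also in \(\cB(\cY_V)\): they must lie in \(V\) and be positive. Both follow from homogeneity of the defining equations and \(\lambda>0\). We also need that rationality of \(\lambda\) is forced by integrality, which the argument above supplies.
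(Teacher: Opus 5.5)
Your proposal is correct and follows the paper's proof essentially step for step. It uses the same four arguments: dividing by the gcd, rationality of $\lambda$ together with $b\mid P_i$, dividing by $p$, and $v_p(P)=0$ forcing $p$-adic visibility. Your extra appeal to Theorem~\ref{thm:local-detectability} with all weights equal to $1$ is valid but redundant, since the paper (like your final paragraph) deduces local detectability directly from the two explicit criteria.
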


\begin{proof}
Suppose first that \(d:=\gcd(P_1,\dots,P_n)>1\).  Since the equations
defining \(V\) are homogeneous, the point \(P/d\) also lies on \(V\).
All of its coordinates are positive integers, so \(P/d\) is another
point of the branch locus.  Moreover, it lies on the same rational ray
as \(P\) and occurs earlier on that ray. Thus, \(P\) is not globally
visible.

Conversely, suppose that \(P\) is not globally visible.  Then there is
an earlier integral point \(Q\) on the same ray.  Hence \(Q=\lambda P\)
for some real number \(0<\lambda<1\).  Since \(P\) and \(Q\) have
nonzero integral coordinates, \(\lambda\) is rational.  Write
\(\lambda=a/b\) in lowest terms.  Since \(aP_i/b\) is an integer for
every \(i\) and \((a,b)=1\), the integer \(b\) divides every \(P_i\).
 Inequality \(\lambda<1\) gives \(b>1\), so the coordinates of \(P\)
have a nontrivial common divisor.

Now fix a prime \(p\).  If \(p\) divides every coordinate of \(P\), then
\(P/p\) is an earlier integral point on the same branch.  Furthermore,
\(v_p(P/p)=v_p(P)-1\). Thus, \(P\) is not \(p\)-adically visible.

If \(p\) does not divide every coordinate, then \(v_p(P)=0\).  Every
integral point has a nonnegative \(p\) -adic minimum valuation, so no
earlier integral point can have a valuation strictly smaller than zero.
Hence \(P\) is \(p\)-adically visible.

It follows that \(P\) is \(p\)-adically visible for every prime \(p\)
if and only if no prime divides all of its coordinates, which is
equivalent to \(\gcd(P_1,\dots,P_n)=1\).  The latter is exactly the
global visibility condition that was proved above.
\end{proof}

The density problem can also be reduced to primitive-point counting.
Write \(N_V(X):=\#\cB(\cY_V;X)\), and let \(P_V(X)\) denote the number
of primitive points, equivalently globally visible points in
\(\cB(\cY_V;X)\).

\begin{proposition}
\label{prop:cone-density}
Suppose that for some \(c>0\), \(\alpha\geq1\), and
\(\beta\geq0\), one has
\begin{equation}
        P_V(X)
        \sim
        cX^\alpha(\log X)^\beta.
        \label{PV}
\end{equation}
Then the following assertions hold.

If \(\alpha>1\), then
\[
        N_V(X)
        \sim
        c\zeta(\alpha)X^\alpha(\log X)^\beta,
\]
and consequently
\[
        \mathfrak d^{\vis}(\cY_V)
        =
        \frac{1}{\zeta(\alpha)}.
\]
For every prime \(p\),
\[
        \mathfrak d_p^{\vis}(\cY_V)
        =
        1-p^{-\alpha},
\]
so that
\[
        \mathfrak d^{\vis}(\cY_V)
        =
        \prod_p
        \mathfrak d_p^{\vis}(\cY_V).
\]

If \(\alpha=1\), then
\[
        N_V(X)
        \sim
        \frac{c}{\beta+1}
        X(\log X)^{\beta+1}.
\]
In this case, the globally visible points have relative density zero,
while for every prime \(p\) one has
\[
        \mathfrak d_p^{\vis}(\cY_V)
        =
        1-\frac1p.
\]
Thus, the Euler product of the local densities is zero and again agrees
with the global density.
\end{proposition}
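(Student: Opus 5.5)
The plan is to combine the primitive-point decomposition with Proposition~\ref{prop:cone-local-global}. Every point of \(\cB(\cY_V)\) can be written uniquely as \(mQ\), where \(m=\gcd(P_1,\dots,P_n)\geq1\) and \(Q\) is primitive. Since \(V\) is homogeneous and \(H(mQ)=mH(Q)\), this gives the identity
\[
N_V(X)=\sum_{m\leq X}P_V(X/m).
\]
In the same way, Proposition~\ref{prop:cone-local-global} identifies the points that fail \(p\)-adic visibility with the points \(pP'\), where \(P'\in\cB(\cY_V)\) is arbitrary. Hence
\[
\#\bigl(\cB(\cY_V;X)\setminus\cB_p^{\vis}(\cY_V;X)\bigr)=N_V(X/p).
\]
The local densities are therefore \(1-\lim_{X\to\infty} N_V(X/p)/N_V(X)\). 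Once \(N_V\) is known to be regularly varying of index \(\alpha\), this equals \(1-p^{-\alpha}\) in the first case and \(1-p^{-1}\) in the second.

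For \(\alpha>1\), I would evaluate the sum by dominated convergence. Since \(P_V(Y)\ll Y^{\alpha}(\log(Y+2))^{\beta}\) for all \(Y\geq1\), the ratio \(P_V(X/m)/(X^\alpha(\log X)^\beta)\) is bounded by
\[
C\,m^{-\alpha}\Bigl(\frac{\log(X/m+2)}{\log X}\Bigr)^{\beta}\ll m^{-\alpha}.
\]
The right-hand side is summable because \(\alpha>1\). For each fixed \(m\), the ratio tends to \(c\,m^{-\alpha}\). Summing over \(m\) gives \(N_V(X)\sim c\,\zeta(\alpha)X^\alpha(\log X)^\beta\), and dividing by this yields \(\mathfrak d^{\vis}=1/\zeta(\alpha)\). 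The local density computation then follows from \(N_V(X/p)/N_V(X)\to p^{-\alpha}\). The Euler product identity \(\prod_p(1-p^{-\alpha})=1/\zeta(\alpha)\) holds because \(\alpha>1\).

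The case \(\alpha=1\) is the main obstacle, because the sum is no longer dominated by small \(m\). I would split the range at \(m\leq X/T\) for a large fixed \(T\). On the main range I would use \(P_V(X/m)=(c+o(1))(X/m)(\log(X/m))^{\beta}\), which holds uniformly for \(X/m\geq T\). The contribution of the tail \(m>X/T\) is at most \(\sum_{m>X/T}P_V(T)\), and since there are at most \(X\) such \(m\) this is \(O_T(X)\). This is negligible against \(X(\log X)^{\beta+1}\).

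The main range then reduces, by comparison with an integral and the substitution \(u=\log(X/m)\), to
\[
cX\int_{0}^{\log X}u^{\beta}\,du\,(1+o(1))=\frac{c}{\beta+1}X(\log X)^{\beta+1}(1+o(1)).
\]
To make the \(o(1)\) rigorous, I would handle the error \(\varepsilon(X/m)(X/m)(\log(X/m))^\beta\) by choosing \(T\) so large that \(\varepsilon\leq\eta\) on the whole main range. Given this asymptotic, \(P_V(X)/N_V(X)\asymp 1/\log X\to0\), so the global density is zero. The function \(X(\log X)^{\beta+1}\) is regularly varying of index \(1\), so \(N_V(X/p)/N_V(X)\to1/p\) and the local densities are \(1-1/p\). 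Finally \(\prod_p(1-1/p)=0\) by Mertens, which matches the global density of zero.
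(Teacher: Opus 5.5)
Your proof is correct, and its skeleton matches the paper's. Both arguments use the identity $N_V(X)=\sum_{m\le X}P_V(X/m)$ coming from the primitive decomposition. Both use the bijection between $p$-adically invisible points of height at most $X$ and $\cB(\cY_V;X/p)$, which gives the local factor as $1-\lim N_V(X/p)/N_V(X)$. For $\alpha=1$, both split the $m$-sum at $X/T$.

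The difference lies in how the sum is evaluated. The paper writes $P_V(Y)=cY^\alpha(\log Y)^\beta+E(Y)$ and evaluates the main term by partial summation with explicit secondary terms. It then asserts $\sum_{d\le X}E(X/d)=O(X)$. That assertion rests on taking $E(Y)=o(1)$, but the hypothesis $P_V(Y)\sim cY^\alpha(\log Y)^\beta$ only gives $E(Y)=o(Y^\alpha(\log Y)^\beta)$. So the remainder is in general only $o(X^\alpha(\log X)^\beta)$, respectively $o(X(\log X)^{\beta+1})$, and not $O(X)$.

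Your argument avoids this. For $\alpha>1$ you use Tannery's theorem (dominated convergence for series) with the summable majorant $m^{-\alpha}$. For $\alpha=1$ you run the $\eta$--$T$ argument together with the monotone tail bound $X\,P_V(T)$. Both steps use exactly the information in the hypothesis and need no rate of convergence, so they establish the stated asymptotics rigorously.

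What you give up is any explicit error term. The paper's main-term-plus-remainder route could only supply one under a quantitative hypothesis on $E$, such as a power saving. Since the proposition claims only asymptotic equivalences and limits, your version is the more economical and fully justified one.
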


\begin{proof}
Recall that \(P_V(X)\) denotes the number of primitive points of
\(\cB(\cY_V)\) having height at most \(X\), while \(N_V(X)\) denotes
the total number of points of \(\cB(\cY_V)\) having height at most
\(X\).

Every point
\[
        P=(P_1,\dots,P_n)\in\cB(\cY_V)
\]
has a unique decomposition
\[
        P=dQ,\] with 
$d=\gcd(P_1,\dots,P_n)
$
and $Q$ primitive.  Since \(V\) is an affine cone, \(Q\) again
belongs to \(V(\Z)\), and since all coordinates considered are
positive, \(Q\in\cB(\cY_V)\).  Conversely, if \(Q\) is a primitive
branch point and \(d\geq1\), then \(dQ\) is again a branch point.
The height is homogeneous under ordinary scalar multiplication:
$ H(dQ)=dH(Q).$
Consequently, for a fixed positive integer \(d\), the points of height
at most \(X\) whose coordinate gcd is exactly \(d\) are in bijection
with the primitive points \(Q\) satisfying \(H(Q)\leq X/d\).  It
follows that
\begin{equation}\label{eq:cone-total-from-primitive}
        N_V(X)
        =
        \sum_{d\le X}P_V(X/d).
\end{equation}
From \eqref{PV}, we write
\[P_V(Y)=cY^{\alpha}(\log Y)^{\beta}+E(Y),\]
where $E(Y)=o(1)$.
Partial summation on the first term gives: for $\alpha>1$ and $\beta>0$
\begin{align*}
    \sum_{d\le X}c(X/d)^{\alpha}(\log (X/d))^{\beta}=cX^{\alpha}\zeta(\alpha)(\log X)^{\beta}+O_{\alpha,\beta}((\log X)^{\beta-1}).
\end{align*}
For $\beta=0$ and $\alpha>1$, it is
\[c\sum_{d\le X}\frac{X^{\alpha}}{d^{\alpha}}=c\zeta(\alpha)X^{\alpha}+O(X).\]
For $d\asymp X$, $X/d$ is bounded, and so to estimate the remaining term $\sum_{d\le X}E(X/d)$, we split the sum over $d$ into the range $d\le X/T$ and $X/T<d\le X$ for a choice of $T<X$. Using the boundedness of $E$ in the range $d\le X/T$ for a suitable choice of $T$, and the sum in the remaining range going to $o(X)$, we can write the following
\[\sum_{d\le X}E(X/d)=O(X).\]
Putting all of this together, we have: for $\beta>0$,
\begin{equation*}
N_V(X)=cX^{\alpha}\zeta(\alpha)(\log X)^{\beta}+O(X)
\end{equation*}
Hence, for $\alpha>1$ and $\beta\ge 0$,
\begin{equation} \label{eq:cone-total-alpha-large}
N_V(X)\sim cX^{\alpha}\zeta(\alpha)(\log X)^{\beta}. 
\end{equation}
Thus, we obtain the claimed asymptotic of Proposition \ref{prop:cone-density}.
By Proposition~\ref{prop:cone-local-global}, the globally visible
points are precisely the primitive points. Therefore,
from \eqref{PV} and
\eqref{eq:cone-total-alpha-large}, we obtain
\[
 \mathfrak d^{\vis}(\cY_V)
 =
 \lim_{X\to\infty}\frac{P_V(X)}{N_V(X)}=\frac{1}{\zeta(\alpha)}.
\]

Next, we compute the local density.  Fix a prime \(p\).  By
Proposition~\ref{prop:cone-local-global}, a branch point fails to be
\(p\)-adically visible if and only if all its coordinates are
divisible by \(p\). The division by \(p\) gives a bijection
\[
 \left\{
 P\in\cB(\cY_V;X):
 p\mid P_i\text{ for every }i
 \right\}
 \longleftrightarrow
 \cB(\cY_V;X/p).
\]
Indeed, if \(P=pQ\), the homogeneity of the equations defining \(V\)
shows that \(Q\in V(\Z)\), the positivity is preserved, and
\(H(Q)=H(P)/p\). Conversely, if \(Q\) has height at most \(X/p\),
then \(pQ\) has height at most \(X\) and all its coordinates are
divisible by \(p\).

Thus, the number of points of height at most \(X\) that fail
\(p\)-adic visibility is exactly \(N_V(X/p)\).  By
\eqref{eq:cone-total-alpha-large},
\begin{align*}
 \frac{N_V(X/p)}{N_V(X)}
 &\sim
 \frac{
 (X/p)^\alpha
 \bigl(\log(X/p)\bigr)^\beta
 }{
 X^\alpha(\log X)^\beta
 }\\
 &=
 p^{-\alpha}
 \left(
 1-\frac{\log p}{\log X}
 \right)^\beta
 \longrightarrow
 p^{-\alpha}.
\end{align*}
Consequently, the proportion of \(p\)-adically visible points tends to
\[
        \mathfrak d_p^{\vis}(\cY_V)
        =
        1-p^{-\alpha}.
\]
Finally, since \(\alpha>1\), Euler's product for the zeta function
converges absolutely and gives
\[
        \prod_p\mathfrak d_p^{\vis}(\cY_V)
        =
        \prod_p(1-p^{-\alpha})
        =
        \frac1{\zeta(\alpha)}
        =
        \mathfrak d^{\vis}(\cY_V).
\]

\smallskip
\noindent
\emph{Case \(\alpha=1\).}

We now assume
\[
        P_V(Y)
        \sim
        cY(\log Y)^\beta.
\]
Hence, $P_Y(Y)=cY(\log Y)^{\beta}+E(Y)$, where $E(Y)=o(1)$ as $Y\rightarrow\infty$. 
Applying partial summation formula to the first term gives
\begin{align}
    cX\sum_{d\le X}\frac{\log (X/d)^{\beta}}{d}=\frac{X(\log X)^{\beta+1}}{\beta+1}+O\left(X(\log X)^{\beta}\right).
\end{align}
For the remainder term $E(X)$, as before, we have the partial sum contribution $O(X)$.
Therefore, for $\alpha=1$, we obtain
\begin{equation}\label{eq:cone-critical-total}
        N_V(X)
        \sim
        \frac{c}{\beta+1}
        X(\log X)^{\beta+1}.
\end{equation}
The globally visible points are exactly the primitive points, so
\[
 \frac{P_V(X)}{N_V(X)}
 \sim
 \frac{
 cX(\log X)^\beta
 }{
 \frac{c}{\beta+1}X(\log X)^{\beta+1}
 }
 =
 \frac{\beta+1}{\log X},
\]
which tends to zero.  Hence
\[
        \mathfrak d^{\vis}(\cY_V)=0.
\]

Finally, fix a prime \(p\).  As before, the number of points which fail
\(p\)-adic visibility is exactly \(N_V(X/p)\).  From
\eqref{eq:cone-critical-total},
\begin{align*}
 \frac{N_V(X/p)}{N_V(X)}
 &\sim
 \frac{
 \frac{X}{p}
 \bigl(\log(X/p)\bigr)^{\beta+1}
 }{
 X(\log X)^{\beta+1}
 }\\
 &=
 \frac1p
 \left(
 1-\frac{\log p}{\log X}
 \right)^{\beta+1}
 \longrightarrow
 \frac1p.
\end{align*}
Therefore
\[
        \mathfrak d_p^{\vis}(\cY_V)
        =
        1-\frac1p.
\]

The corresponding Euler product vanishes.  Indeed,
\[
        \prod_p\left(1-\frac1p\right)=0.
\]
For example, if the product is taken over \(p\leq Y\), then
\[
 \log\prod_{p\leq Y}\left(1-\frac1p\right)
 =
 \sum_{p\leq Y}\log\left(1-\frac1p\right)
 \leq
 -\sum_{p\leq Y}\frac1p,
\]
and the latter tends to \(-\infty\) because
\(\sum_p p^{-1}\) diverges.  Thus the partial products tend to zero.
Consequently,
\[
        \mathfrak d^{\vis}(\cY_V)
        =
        0
        =
        \prod_p\mathfrak d_p^{\vis}(\cY_V).
\]
This completes the proof.
\end{proof}

\subsubsection{Pythagorean triples}

Consider the quadratic cone
\[
        V_{\mathrm{Pyth}}
        :
        X^2+Y^2=Z^2
        \subseteq\A^3
\]
and the rational-ray visibility datum on its positive part.  Its branch
locus consists of the positive integral Pythagorean triples.

This variety is not a polynomial graph of the type considered earlier.
For example, projection onto the first two coordinates would require
the third coordinate to be
\(\sqrt{x^2+y^2}\), rather than a polynomial in \(x\) and \(y\).

\begin{figure}[t]
\centering
\begin{tikzpicture}
\begin{axis}[
    width=0.72\textwidth,
    height=0.62\textwidth,
    view={45}{24},
    axis lines=center,
    xlabel={$X$},
    ylabel={$Y$},
    zlabel={$Z$},
    xmin=-14,xmax=14,
    ymin=-14,ymax=14,
    zmin=0,zmax=14,
    xtick={-10,0,10},
    ytick={-10,0,10},
    ztick={0,5,10},
    unit vector ratio*=1 1 1,
    clip=false
]

\addplot3[
    surf,
    opacity=0.16,
    shader=interp,
    domain=0:13,
    y domain=0:360,
    samples=18,
    samples y=40
]
({x*cos(y)},{x*sin(y)},{x});

\addplot3[
    thick,
    domain=0:360,
    samples=100
]
({13*cos(x)},{13*sin(x)},{13});

\addplot3[
    very thick,
    domain=0:2.4,
    samples=2
]
({3*x},{4*x},{5*x});

\addplot3[
    only marks,
    mark=*,
    mark size=2.2pt
]
coordinates {
    (3,4,5)
    (6,8,10)
};

\node at (axis cs:3.3,4.2,5.8)
      {$P=(3,4,5)$};

\node at (axis cs:6.3,8.2,10.8)
      {$2P=(6,8,10)$};

\addplot3[
    very thick,
    dashed,
    domain=0:1,
    samples=2
]
({5*x},{12*x},{13*x});

\addplot3[
    only marks,
    mark=*,
    mark size=2pt
]
coordinates {
    (5,12,13)
};

\node at (axis cs:5.6,11.4,13.3)
      {$(5,12,13)$};

\addplot3[
    only marks,
    mark=*,
    mark size=1.8pt
]
coordinates {(0,0,0)};

\node at (axis cs:-1,-1,0.6) {$0$};

\end{axis}
\end{tikzpicture}

\caption{Rational-ray visibility on the Pythagorean cone
\(X^2+Y^2=Z^2\).  The full upper cone is shown to emphasize the
ambient geometry, while the arithmetic visibility datum uses its
positive part.  The primitive triple \(P=(3,4,5)\) is the first
integral point on its rational ray, whereas \(2P=(6,8,10)\) is a
later integral point and is therefore not globally visible.  The
dashed line illustrates a second rational ray through the primitive
triple \((5,12,13)\).}
\label{fig:pythagorean-rays}
\end{figure}

\begin{proposition}\label{prop:Pythagorean-visibility}
For the Pythagorean visibility datum one has
\[
        \mathfrak d^{\vis}(\cY_{\mathrm{Pyth}})=0,
        \qquad
        \mathfrak d_p^{\vis}(\cY_{\mathrm{Pyth}})
        =
        1-\frac1p
\]
for every prime \(p\).  Visibility is locally detectable, and
\[
        \mathfrak d^{\vis}(\cY_{\mathrm{Pyth}})
        =
        \prod_p
        \mathfrak d_p^{\vis}(\cY_{\mathrm{Pyth}}).
\]
\end{proposition}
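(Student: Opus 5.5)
The plan is to reduce everything to Proposition~\ref{prop:cone-density} with \(\alpha=1\). The variety \(V_{\mathrm{Pyth}}\) is defined by a homogeneous equation, and \(V_{\mathrm{Pyth}}(\Q)\cap\Q_{>0}^3\) is nonempty since it contains \((3,4,5)\). So Proposition~\ref{prop:cone-local-global} applies directly. A branch point is globally visible exactly when it is a primitive Pythagorean triple, and \(p\)-adically visible exactly when \(p\) does not divide all three coordinates. This already gives local detectability.

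What remains is to establish the hypothesis \eqref{PV} with \(\alpha=1\), \(\beta=0\). On the positive part the height is \(H(x,y,z)=z\), since \(z>x,y>0\). So \(P_V(X)\) is the number of primitive triples \((x,y,z)\in\Z_{>0}^3\) with \(x^2+y^2=z^2\) and \(z\le X\), counted as ordered triples (both \((3,4,5)\) and \((4,3,5)\) count).

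I will use Euclid's parametrization. Each primitive triple with \(y\) even is uniquely \((m^2-n^2,2mn,m^2+n^2)\) with \(m>n\ge1\), \(\gcd(m,n)=1\), and \(m\not\equiv n\pmod 2\). Swapping the first two coordinates then gives a factor \(2\). Hence
\[
P_V(X)=2\,\#\{(m,n):m>n\ge1,\ \gcd(m,n)=1,\ m\not\equiv n\ (2),\ m^2+n^2\le X\}.
\]
This is a lattice point count in the sector \(\{m>n>0\}\) of the disc of radius \(\sqrt X\), which has area \(\pi X/8\). I will impose the coprimality by Möbius inversion and the opposite-parity condition by restricting to a coset of index-\(2\) sublattices. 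This is the classical Lehmer count, giving \(P_V(X)\sim cX\) for some \(c>0\); I expect \(c=1/\pi\) for ordered pairs, but the precise value is irrelevant. Only \(c>0\) and the exponent \(\alpha=1\) matter.

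Proposition~\ref{prop:cone-density} in the case \(\alpha=1\), \(\beta=0\) then yields \(N_V(X)\sim cX\log X\), global density \(0\), and \(\mathfrak d_p^{\vis}=1-1/p\) for every prime \(p\). It also gives that the Euler product vanishes and hence agrees with the global density.

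The only step with real content is the asymptotic \(P_V(X)\sim cX\). I expect the main obstacle to be controlling the Möbius sum uniformly in \(d\le\sqrt X\). The error term for each \(d\) is \(O(\sqrt X/d)\) from the boundary of the dilated sector, and these sum to \(O(\sqrt X\log X)=o(X)\). The parity condition I will handle by writing the count as the coprime count minus the count of coprime pairs with both coordinates odd, each treated the same way.
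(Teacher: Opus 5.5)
Your proposal is correct and follows the paper's proof essentially step for step. You get local detectability from Proposition~\ref{prop:cone-local-global}, use Euclid's parametrization with a factor $2$ for ordered triples, do a M\"obius lattice count in the sector of area $\pi X/8$ giving $P_V(X)\sim X/\pi$, and then apply Proposition~\ref{prop:cone-density} with $\alpha=1$, $\beta=0$. Your explicit handling of parity (coprime count minus both-odd coprime count) and of the factor $2$ (via which leg is even) are just more concrete versions of the paper's local-factor computation and its $1+\sqrt2$ remark.
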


\begin{proof}
By Proposition~\ref{prop:cone-local-global}, a point of the
Pythagorean cone is globally visible if and only if its three
coordinates are relatively prime.  Thus the globally visible points
are precisely the primitive positive integral solutions of
\(x^2+y^2=z^2\).  The same proposition also shows that visibility is
locally detectable. It remains to determine the growth of the primitive
points.

We recall the classical parametrization of primitive Pythagorean
triples. Let \(m>n>0\) be integers satisfying \(\gcd(m,n)=1\) and such
that \(m\) and \(n\) have opposite parity. Then
\(m^2-n^2, 2mn, m^2+n^2\) are positive integers satisfying
\((m^2-n^2)^2+(2mn)^2=(m^2+n^2)^2\), and their greatest common divisor
is \(1\). Conversely, every primitive positive integral solution of
\(x^2+y^2=z^2\) arises uniquely in one of the two forms
\[
        (x,y,z)
        =
        (m^2-n^2,2mn,m^2+n^2)
\]
or
\[
        (x,y,z)
        =
        (2mn,m^2-n^2,m^2+n^2),
\]
for a unique pair \(m>n>0\) satisfying the coprimality and parity
conditions above.

The occurrence of these two ordered triples is important in our
setting, since the first and second coordinates of the ambient affine
space are distinguished. Interchanging them therefore gives a
different point unless they are equal. In the primitive parametrization
they cannot be equal: the equation \(m^2-n^2=2mn\) would imply
\((m/n)^2-2(m/n)-1=0\), and hence \(m/n=1+\sqrt2\), which is impossible
for rational \(m/n\). Thus each admissible parameter pair \((m,n)\)
gives exactly two distinct primitive points of the branch locus.

Since \(m^2+n^2>m^2-n^2\) and \(m^2+n^2>2mn\), the max-height of
either corresponding point is \(H(x,y,z)=m^2+n^2\). Therefore the
primitive points of height at most \(X\) correspond to the integer
pairs
\[
        (m,n)\in\Z^2,
        \qquad
        m>n>0,\qquad
        m^2+n^2\leq X,
\]
which are coprime and of opposite parity, with each such pair counted
twice.

We now count these parameter pairs. Let
\[
        \mathcal S_X
        :=
        \left\{
        (u,v)\in\R^2:
        u>v>0,\;
        u^2+v^2\leq X
        \right\}.
\]
In polar coordinates, the inequalities \(u>v>0\) correspond to angles
\(0<\theta<\pi/4\), while \(u^2+v^2\leq X\) corresponds to radius at
most \(\sqrt X\). Hence \(\mathcal S_X\) is a circular sector of angle
\(\pi/4\), and its area is
\[
        \operatorname{area}(\mathcal S_X)
        =
        \frac12\cdot\frac{\pi}{4}\cdot X
        =
        \frac{\pi X}{8}.
\]

We next determine the density of integer pairs \((m,n)\) satisfying
the two arithmetic conditions. First consider the condition
\(\gcd(m,n)=1\). For every odd prime \(p\), the only forbidden residue
class condition is that both \(m\) and \(n\) be divisible by \(p\).
Thus the local factor at \(p\) is \(1-p^{-2}\). At the prime \(2\),
the requirement that \(m\) and \(n\) have opposite parity means that
the residue class of \((m,n)\) modulo \(2\) must be either \((1,0)\)
or \((0,1)\). There are four residue classes modulo \(2\) in total,
so the local factor at \(2\) is \(2/4=1/2\).

It follows that the density of coprime pairs of opposite parity in
\(\Z^2\) is
\[
        \frac12
        \prod_{p\ \mathrm{odd}}
        \left(1-\frac1{p^2}\right).
\]
Using \(\prod_p(1-p^{-2})=1/\zeta(2)=6/\pi^2\), and separating the
factor at \(p=2\), we obtain
\[
        \prod_{p\ \mathrm{odd}}
        \left(1-\frac1{p^2}\right)
        =
        \frac{6/\pi^2}{3/4}
        =
        \frac8{\pi^2}.
\]
Hence the density is \(4/\pi^2\).

A standard M\"obius-inversion lattice-point argument, applied to the
expanding sector \(\mathcal S_X\), therefore gives
\[
        \#\left\{
        (m,n)\in\mathcal S_X\cap\Z^2:
        \gcd(m,n)=1,\;
        m\not\equiv n\pmod2
        \right\}
        =
        \frac4{\pi^2}\operatorname{area}(\mathcal S_X)
        +o(X).
\]
Since \(\operatorname{area}(\mathcal S_X)=\pi X/8\), the right-hand
side is \(X/(2\pi)+o(X)\).

Each admissible parameter pair gives the two distinct ordered primitive
points described above. Consequently,
\[
        P_{V_{\mathrm{Pyth}}}(X)
        =
        2\left(
        \frac{X}{2\pi}+o(X)
        \right)
        =
        \frac{X}{\pi}+o(X),
\]
and therefore \(P_{V_{\mathrm{Pyth}}}(X)\sim X/\pi\).

We may now apply Proposition~\ref{prop:cone-density} with
\(\alpha=1\), \(\beta=0\), and \(c=1/\pi\). It follows that the total
number of positive integral points on the Pythagorean cone satisfies
\[
        N_{V_{\mathrm{Pyth}}}(X)
        \sim
        \frac{X}{\pi}\log X.
\]
Hence
\[
        \frac{
        P_{V_{\mathrm{Pyth}}}(X)
        }{
        N_{V_{\mathrm{Pyth}}}(X)
        }
        \sim
        \frac1{\log X}
        \longrightarrow0.
\]
Therefore \(\mathfrak d^{\vis}(\cY_{\mathrm{Pyth}})=0\).

For a fixed prime \(p\), Proposition~\ref{prop:cone-density} gives
\(\mathfrak d_p^{\vis}(\cY_{\mathrm{Pyth}})=1-1/p\).
Equivalently, this follows directly by observing that a point fails
\(p\)-adic visibility precisely when all three of its coordinates are
divisible by \(p\). Division by \(p\) then identifies such points of
height at most \(X\) with all Pythagorean points of height at most
\(X/p\). Since
\(N_{V_{\mathrm{Pyth}}}(X)\sim (X/\pi)\log X\), one has
\[
        \frac{
        N_{V_{\mathrm{Pyth}}}(X/p)
        }{
        N_{V_{\mathrm{Pyth}}}(X)
        }
        \longrightarrow
        \frac1p.
\]
Thus the proportion which is \(p\)-adically visible tends to
\(1-1/p\).

Finally,
\[
        \prod_p
        \mathfrak d_p^{\vis}(\cY_{\mathrm{Pyth}})
        =
        \prod_p\left(1-\frac1p\right)
        =
        0,
\]
because \(\sum_p1/p\) diverges. This agrees with the global density
computed above.
\end{proof}

Thus primitive triples form a zero-density subset of all integral
Pythagorean triples.
The reason is that summing the positive integer multiples of primitive
triples introduces a harmonic series.

\subsubsection{The quadratic cone \(XZ=Y^2\)}

We begin with one of the simplest examples which is genuinely different
from the polynomial graph families considered above.  Let
\(V_2\subseteq\A^3\) be the affine surface defined by \(XZ=Y^2\), and
consider its positive integral points.  The variety \(V_2\) is an
ordinary affine cone: if \(P=(x,y,z)\in V_2\) and \(\lambda>0\), then
\(\lambda P\in V_2\), since
\((\lambda x)(\lambda z)=(\lambda y)^2\).  Thus the rational rays
through the origin define a visibility datum of the type considered in
Proposition~\ref{prop:cone-local-global}.

This example is not covered by the polynomial graph construction of
the preceding subsection.  Indeed, if one tries to regard \(x\) and
\(y\) as free coordinates, then the third coordinate is forced to be
\(z=y^2/x\).  An arbitrary positive pair \((x,y)\in\Z_{>0}^2\) does
not therefore determine an integral point on \(V_2\); one must impose
the additional arithmetic condition \(x\mid y^2\).  In particular,
the integral points do not arise as the graph of an integral polynomial
over an unrestricted positive lattice.

The primitive points nevertheless admit a very simple description,
and this allows the visibility density to be computed completely.

\begin{proposition}\label{prop:quadratic-cone-density}
For the rational-ray visibility datum on \(V_2\), one has
\[
        \mathfrak d^{\vis}(\cY_{V_2})=0,
        \qquad
        \mathfrak d_p^{\vis}(\cY_{V_2})=1-\frac1p
\]
for every prime \(p\).  Visibility is locally detectable and the Euler
product of the local densities agrees with the global density.
\end{proposition}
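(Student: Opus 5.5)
The plan is to reduce everything to Proposition~\ref{prop:cone-local-global} and Proposition~\ref{prop:cone-density}, exactly as in the Pythagorean case. Since \(V_2\) is defined by the homogeneous equation \(XZ=Y^2\), Proposition~\ref{prop:cone-local-global} immediately gives local detectability: a positive point \((x,y,z)\) is globally visible iff \(\gcd(x,y,z)=1\), and \(p\)-adically visible iff \(p\) does not divide all three coordinates. So the whole task is to establish an asymptotic \(P_{V_2}(X)\sim cX^\alpha(\log X)^\beta\) and read off \(\alpha=1\).

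\textbf{Parametrizing primitive points.} I will show that every primitive positive solution of \(xz=y^2\) has the form \((x,y,z)=(a^2,ab,b^2)\) with \(a,b\geq1\) and \(\gcd(a,b)=1\), and that this representation is unique. Given such a solution, set \(g=\gcd(x,z)\). Since \(\gcd(x,y,z)=1\), any prime \(p\mid g\) would satisfy \(p^2\mid g^2\mid xz=y^2\), hence \(p\mid y\), contradicting primitivity; so \(g=1\). Coprime positive \(x,z\) with \(xz\) a square are both squares: \(x=a^2\), \(z=b^2\), and then \(y=ab\) with \(\gcd(a,b)=1\). Conversely, \((a^2,ab,b^2)\) with \(\gcd(a,b)=1\) is primitive, and \(a=\sqrt x\), \(b=\sqrt z\) recover the pair uniquely.

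\textbf{Counting.} The height is \(\max(a^2,ab,b^2)=\max(a,b)^2\), so \(P_{V_2}(X)\) counts coprime pairs \((a,b)\in\Z_{\geq1}^2\) in the square \([1,\sqrt X]^2\). By the classical count of visible points (Example (1), or a direct M\"obius argument),
\[
P_{V_2}(X)=\frac{6}{\pi^2}X+O(\sqrt X\log X)\sim\frac{6}{\pi^2}X .
\]
Thus \(\alpha=1\), \(\beta=0\), \(c=6/\pi^2\).

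\textbf{Conclusion.} Proposition~\ref{prop:cone-density} in the case \(\alpha=1\) then gives \(N_{V_2}(X)\sim \frac{6}{\pi^2}X\log X\), global density \(0\), local densities \(1-1/p\), and \(\prod_p(1-1/p)=0\), which agrees with the global density. The only step needing care is the structural lemma \(\gcd(x,z)=1\) in the parametrization; the counting itself is routine. As a sanity check, the same asymptotic follows directly: \(N_{V_2}(X)=\sum_{d\leq X}P_{V_2}(X/d)\), which is harmonic.
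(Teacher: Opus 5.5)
Your proposal is correct and follows the paper's proof essentially step for step. Like the paper, you use Proposition~\ref{prop:cone-local-global} for the visibility criteria, derive the primitive parametrization \((a^2,ab,b^2)\) with \(\gcd(a,b)=1\) via \(\gcd(x,z)=1\), count by M\"obius inversion to get \(P_{V_2}(X)=X/\zeta(2)+O(X^{1/2}\log X)\), and conclude with Proposition~\ref{prop:cone-density} for \(\alpha=1\), \(\beta=0\).
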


\begin{proof}
By Proposition~\ref{prop:cone-local-global}, global visibility is
equivalent to primitivity of the ambient coordinate vector.  We
therefore begin by determining all primitive positive integral points
on \(V_2\).

Let \((x,y,z)\in\Z_{>0}^3\) satisfy \(xz=y^2\) and
\(\gcd(x,y,z)=1\).  We first claim that \(x\) and \(z\) are relatively
prime.  Suppose otherwise that a prime \(p\) divides both \(x\) and
\(z\).  Then \(p^2\mid xz=y^2\), and hence \(p\mid y\).  Thus \(p\)
would divide \(x,y,z\), contradicting the assumed primitivity.
Consequently \(\gcd(x,z)=1\).

Since \(xz=y^2\) is a square and \(x\) and \(z\) are relatively prime,
each of \(x\) and \(z\) must itself be a square.  Indeed, let \(p\) be
a prime divisor of \(x\).  Since \(p\nmid z\), the exponent of \(p\)
in the factorization of \(xz\) is exactly the exponent of \(p\) in
\(x\).  But \(xz=y^2\) is a square, so this exponent is even.  The
same argument applies to every prime divisor of \(x\), and hence
\(x=a^2\) for a uniquely determined positive integer \(a\).  Similarly,
\(z=b^2\) for a uniquely determined positive integer \(b\).  The
identity \(y^2=a^2b^2\), together with positivity of \(y\), then gives
\(y=ab\).

Moreover, \(a\) and \(b\) are relatively prime.  A common prime divisor
of \(a\) and \(b\) would divide \(a^2,ab,b^2\), again contradicting
the primitivity of \((x,y,z)\).  Thus every primitive positive point is
uniquely of the form
\[
        (a^2,ab,b^2),
        \qquad
        a,b\in\Z_{>0},\quad \gcd(a,b)=1.
\]
Conversely, every coprime positive pair \((a,b)\) gives a primitive
point of \(V_2\).  The equation \(a^2b^2=(ab)^2\) shows that the point
lies on \(V_2\), while a prime dividing all three coordinates would
divide both \(a\) and \(b\).

We next translate the height condition into a condition on the
parameters \(a\) and \(b\).  The max-height of
\((a^2,ab,b^2)\) is \(\max\{a,b\}^2\).  Indeed, \(ab\leq
\max\{a,b\}^2\), while one of \(a^2\) and \(b^2\) is exactly
\(\max\{a,b\}^2\).  Hence the primitive points of height at most \(X\)
are in bijection with the coprime pairs \(a,b\leq X^{1/2}\).

We now count these pairs explicitly.  Put \(T=X^{1/2}\).  By the
M\"obius identity
\(\mathbf 1_{\gcd(a,b)=1}=\sum_{q\mid a,\ q\mid b}\mu(q)\), their
number is
\[
        \sum_{q\leq T}
        \mu(q)
        \left\lfloor\frac{T}{q}\right\rfloor^2.
\]
Using \(\lfloor T/q\rfloor=T/q+O(1)\), this equals
\(T^2\sum_{q\leq T}\mu(q)q^{-2}+O(T\sum_{q\leq T}q^{-1})+O(T)\).
The second term is \(O(T\log T)\).  On the other hand,
\(\sum_{q\geq1}\mu(q)q^{-2}=1/\zeta(2)\), and the tail beyond \(T\)
is \(O(T^{-1})\).  We therefore obtain
\[
        P_{V_2}(X)
        =
        \frac{X}{\zeta(2)}
        +
        O(X^{1/2}\log X).
\]
In particular, \(P_{V_2}(X)\sim X/\zeta(2)\).

Proposition~\ref{prop:cone-density} applies with \(\alpha=1\),
\(\beta=0\), and \(c=1/\zeta(2)\).  Hence
\[
        N_{V_2}(X)
        \sim
        \frac{X\log X}{\zeta(2)}.
\]
The globally visible points are precisely the primitive points, so the
relative visible proportion is asymptotic to \(1/\log X\) and therefore
tends to zero.  This proves
\(\mathfrak d^{\vis}(\cY_{V_2})=0\).

For a fixed prime \(p\), Proposition~\ref{prop:cone-density} gives
\(\mathfrak d_p^{\vis}(\cY_{V_2})=1-1/p\).  It is also useful to see
the local calculation directly.  By
Proposition~\ref{prop:cone-local-global}, a point fails \(p\)-adic
visibility exactly when all three of its coordinates are divisible by
\(p\).  Since \(V_2\) is an ordinary cone, division by \(p\) sends
such a point to another positive integral point of \(V_2\), and the
max-height is divided by \(p\).  Thus the \(p\)-adically invisible
points of height at most \(X\) are in bijection with all branch points
of height at most \(X/p\).  Since \(N_{V_2}(X)\) has growth
\(X\log X\), the quotient \(N_{V_2}(X/p)/N_{V_2}(X)\) tends to \(1/p\).

Exact local detectability follows from
Proposition~\ref{prop:cone-local-global}.  Finally,
\(\prod_p(1-1/p)=0\), so the Euler product of the local densities
agrees with the global density.
\end{proof}

The preceding parametrization is the affine version of the quadratic
Veronese parametrization of the projective conic.  It is worth
emphasizing why this does not reduce the example to the polynomial graph
case.  The map \((a,b)\mapsto(a^2,ab,b^2)\) is polynomial, but neither
parameter occurs as an ambient coordinate.  More importantly, the same
rational ray contains infinitely many integral points obtained by
ordinary scalar multiplication in the ambient coordinates, whereas
multiplication of the parameters by an integer \(q\) multiplies all
three coordinates by \(q^2\).  Thus the relation between parameter
scaling and ambient radial scaling is different from the graph families
treated by Theorem~\ref{thm:sparse-Davenport}.  The primitive
parametrization removes this ambiguity, and Proposition~\ref{prop:cone-density}
then recovers the full counting function by summing over ordinary
ambient dilations.

\subsubsection{Rank-one \(2\times2\) matrices}

We next consider a determinantal example in which the primitive counting
function has a different growth exponent.  Let \(V_{\det}\subseteq
\A^4\) be the hypersurface defined by \(X_1X_4=X_2X_3\).  Writing a
point as
\[
        M=
        \begin{pmatrix}
        x_1&x_2\\
        x_3&x_4
        \end{pmatrix},
\]
the defining equation is exactly the condition \(\det(M)=0\).  Since
we restrict to positive points, no such matrix is zero, and hence
\(V_{\det}\) parametrizes positive rank-one \(2\times2\) matrices.

This is again an ordinary affine cone, since scalar multiplication of
all four matrix entries preserves the rank-one condition.  It is not,
however, a polynomial graph over three unrestricted integral
coordinates.  For example, on the locus \(x_1\neq0\), the final entry
is determined by \(x_4=x_2x_3/x_1\).  Thus an arbitrary triple
\((x_1,x_2,x_3)\) does not give an integral matrix on the variety:
one must additionally require the divisibility condition
\(x_1\mid x_2x_3\).

The rank-one structure gives a more natural parametrization.  Every
rank-one matrix is an outer product of two vectors, and primitivity of
the matrix translates into primitivity of both vectors.  The resulting
count is no longer of order \(X\); instead a logarithm appears because
the height factors as a product of the two vector heights.

\begin{proposition}\label{prop:rank-one-density}
For the positive rank-one determinantal cone,
\[
        \mathfrak d^{\vis}(\cY_{V_{\det}})
        =
        \frac1{\zeta(2)},
        \qquad
        \mathfrak d_p^{\vis}(\cY_{V_{\det}})
        =
        1-\frac1{p^2}.
\]
Visibility is locally detectable and the global density is the Euler
product of the local densities.
\end{proposition}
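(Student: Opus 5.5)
The argument follows the template of Propositions~\ref{prop:Pythagorean-visibility} and \ref{prop:quadratic-cone-density}. We parametrize the primitive points, count them with the max-height, and then invoke Proposition~\ref{prop:cone-density}. Local detectability, and the fact that visibility is equivalent to $\gcd(x_1,x_2,x_3,x_4)=1$, come directly from Proposition~\ref{prop:cone-local-global}, since $V_{\det}$ is an ordinary affine cone with $\cB(\cY_{V_{\det}})=V_{\det}(\Z)\cap\Z_{>0}^4$.

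\textbf{Step 1: parametrize the primitive points.} Let $M$ be a positive integral matrix of rank one with coprime entries. Its two columns are proportional. Let $u=(a,b)$ be the primitive positive vector on the line spanned by the columns. Then the columns are $c\,u$ and $d\,u$ with $c,d$ positive rationals. Integrality forces $c,d\in\Z_{>0}$, because $u$ is primitive. So $M=u\,v^{T}$ with $v=(c,d)$, that is,
\[
M=\begin{pmatrix} ac & ad\\ bc & bd\end{pmatrix}.
\]
The gcd of the entries is $\gcd(a,b)\gcd(c,d)$. This is checked prime by prime: if $p$ divides all entries and $p\nmid\gcd(a,b)$, then $p$ divides $c$ and $d$. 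Hence $M$ is primitive exactly when both $u$ and $v$ are primitive. The factorization is unique, since $u$ is the primitive generator of the column space and then $v$ is forced. So primitive points correspond bijectively to pairs of primitive positive vectors.

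\textbf{Step 2: count with the max-height.} We have $H(M)=\max(a,b)\max(c,d)=H(u)H(v)$. Let $\Phi(T)$ be the number of primitive positive vectors of height exactly $T$. This equals $2\varphi(T)$ for $T\geq2$, and $\Phi(1)=1$. Then
\[
P_{V_{\det}}(X)=\sum_{ST\le X}\Phi(S)\Phi(T).
\]
Write $A(Y):=\sum_{T\le Y}\Phi(T)=\frac{6}{\pi^2}Y^2+O(Y\log Y)$. We sum over $S$ and use the hyperbola method. This gives
\[
P_{V_{\det}}(X)=\sum_{S\le X}\Phi(S)A(X/S)\approx \frac{6}{\pi^2}X^2\sum_{S\le X}\frac{2\varphi(S)}{S^2}.
\]
The last sum is asymptotic to $\frac{12}{\pi^2}\log X$. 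The expected outcome is $P_{V_{\det}}(X)\sim cX^2\log X$ with $c>0$, explicitly $c=\frac{72}{\pi^4}$ up to the bookkeeping at the boundary terms.

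\textbf{Main obstacle.} The hardest part is making the error terms in Step 2 rigorous. The term $\sum_{S}\Phi(S)\cdot O\bigl((X/S)\log(X/S)\bigr)$ is $O(X^2\log X)$ if bounded naively, which is the same size as the main term. The fix is the Dirichlet hyperbola method, splitting at $S,T\le\sqrt X$ and using symmetry. The error is then $\sum_{S\le\sqrt X}S\cdot (X/S)\log X\ll X^{3/2}\log X$. The main terms are governed by $\sum_{S\le\sqrt X}\varphi(S)/S^2=\frac{6}{\pi^2}\log\sqrt X+O(1)$. Doubling for the two halves, minus the overlap square of order $X^2$, gives $\asymp X^2\log X$. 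Only the exponents matter for the densities, so the precise constant is irrelevant.

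\textbf{Step 3: conclude.} Apply Proposition~\ref{prop:cone-density} with $\alpha=2$ and $\beta=1$. This yields $N_{V_{\det}}(X)\sim c\,\zeta(2)X^2\log X$, a global density of $1/\zeta(2)$, and local densities $1-p^{-2}$. The Euler product $\prod_p(1-p^{-2})=1/\zeta(2)$ then matches the global density.
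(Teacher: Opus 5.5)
Your proof is correct and is essentially the paper's: the same factorization $M=uv^{\mathrm t}$ into primitive positive vectors, the same convolution $P_{V_{\det}}(X)=\sum_{m\le X}a(m)A(X/m)$ with main term $\frac{2}{\zeta(2)^2}X^2\log X=\frac{72}{\pi^4}X^2\log X$, and the same appeal to Proposition~\ref{prop:cone-density} with $\alpha=2$, $\beta=1$. The hyperbola split is not actually needed, since the paper keeps the factor $\log(2X/m)$ rather than bounding it by $\log X$ and, using $a(m)\le 2m$ and Stirling's formula, gets an $O(X^2)$ error directly; your split also works, but note that Proposition~\ref{prop:cone-density} requires the genuine asymptotic $P_{V_{\det}}(X)\sim cX^2\log X$ that your computation produces, not merely $\asymp$ --- the value of $c$ is irrelevant, but its existence is not.
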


\begin{proof}
We first establish the parametrization of the primitive points.  Let
\(M\) be a positive integral rank-one matrix.  Its two columns span a
one-dimensional rational subspace of \(\Q^2\).  There is a unique
primitive positive vector \(u=(a,b)^{\mathrm t}\in\Z_{>0}^2\)
spanning this line.

Every integral vector lying on this rational line is in fact an
integral multiple of \(u\).  To see this, suppose
\((r,s)=\lambda(a,b)\in\Z^2\) with \(\lambda\in\Q\).  Since
\(\gcd(a,b)=1\), there exist integers \(A,B\) such that \(Aa+Bb=1\).
It follows that \(\lambda=A(\lambda a)+B(\lambda b)=Ar+Bs\), which is
an integer.  Hence the two columns of \(M\) are \(cu\) and \(du\) for
some positive integers \(c,d\).

Writing \(v=(c,d)^{\mathrm t}\), we therefore have \(M=uv^{\mathrm t}\).
Since \(\gcd(a,b)=1\), the greatest common divisor of the four entries
\(ac,ad,bc,bd\) is precisely \(\gcd(c,d)\).  Thus \(M\) is primitive
if and only if \(v\) is primitive.  We have consequently obtained a
bijection between primitive positive rank-one matrices and ordered
pairs of primitive positive vectors \(u,v\in\Z_{>0}^2\).  The
factorization is unique because the positive primitive generator of the
column space is unique.

Let \(A(T)\) denote the number of primitive positive pairs
\((a,b)\) satisfying \(\max\{a,b\}\leq T\).  M\"obius
inversion gives
\[
        A(T)
        =
        \frac{T^2}{\zeta(2)}
        +
        O(T\log(2T)).
\]

It will be convenient to group these vectors according to their exact
max-height.  Let \(a(m)\) denote the number of primitive positive pairs
\((a,b)\) with \(\max\{a,b\}=m\).  When \(m=1\), the only pair is
\((1,1)\), so \(a(1)=1\).  Suppose \(m\geq2\). This pair is either
of the form \((m,k)\) with \(1\leq k<m\) and \(\gcd(k,m)=1\), or of
the form \((k,m)\) with the same conditions.  There are
\(\varphi(m)\) possibilities for each type and the two types are
disjoint.  Hence \(a(m)=2\varphi(m)\) for \(m\geq2\).

If \(u=(a,b)^{\mathrm t}\) and \(v=(c,d)^{\mathrm t}\), the four
entries in \(uv^{\mathrm t}\) are \(ac,ad,bc,bd\).  Since all
coordinates are positive, their maximum is
\(\|u\|_\infty\|v\|_\infty\).  Grouping the first primitive vector
according to \(m=\|u\|_\infty\) therefore gives the exact identity
\begin{equation}\label{eq:rank-one-primitive-count}
        P_{V_{\det}}(X)
        =
        \sum_{m\leq X}a(m)A(X/m).
\end{equation}

We now evaluate the right-hand side.  Substitution of the asymptotic
for \(A(X/m)\) gives the main contribution
\(X^2\zeta(2)^{-1}\sum_{m\leq X}a(m)m^{-2}\).  It remains to estimate
this weighted sum of \(a(m)\).

We use the classical estimate
\(\sum_{m\leq T}\varphi(m)=T^2/(2\zeta(2))+O(T\log T)\).
For $m\ge 2$, partial summation gives
\[
        \sum_{m\leq X}\frac{a(m)}{m^2}
        =
        \frac{2}{\zeta(2)}\log X+O(1).
\]
We must also check that the errors in the estimate for \(A(X/m)\) do
not affect the leading term.  Their total contribution is bounded by a
constant multiple of
\(X\sum_{m\leq X}a(m)m^{-1}\log(2X/m)\).  Since \(a(m)\leq2m\) for
\(m\geq2\), this is at most a constant multiple of
\(X\sum_{m\leq X}\log(2X/m)\).  By Stirling's formula,
\(\sum_{m\leq X}\log(2X/m)=O(X)\), and hence the accumulated error is
\(O(X^2)\).

These estimates yield
\[
        P_{V_{\det}}(X)
        =
        \frac{2}{\zeta(2)^2}X^2\log X
        +
        O(X^2).
\]
Thus Proposition~\ref{prop:cone-density} applies with \(\alpha=2\),
\(\beta=1\), and \(c=2/\zeta(2)^2\).  We obtain
\[
        N_{V_{\det}}(X)
        \sim
        \frac{2}{\zeta(2)}X^2\log X,
\]
and division of the primitive count by the total count gives
\(\mathfrak d^{\vis}(\cY_{V_{\det}})=1/\zeta(2)\).

For every prime \(p\), Proposition~\ref{prop:cone-density} gives
\(\mathfrak d_p^{\vis}(\cY_{V_{\det}})=1-p^{-2}\).  Exact local
detectability follows from Proposition~\ref{prop:cone-local-global},
and Euler's product gives
\(\prod_p(1-p^{-2})=1/\zeta(2)\), as required.
\end{proof}

This example already shows that the dimension of the sparse variety
does not determine the Euler factor.  The hypersurface \(V_{\det}\)
has dimension \(3\), but the local factor is \(1-p^{-2}\) and the
global density is \(1/\zeta(2)\), rather than \(1-p^{-3}\) and
\(1/\zeta(3)\).  The exponent \(2\) comes instead from the growth of
the primitive counting function.  The two primitive vectors \(u\) and
\(v\) each contribute two-dimensional lattice-point growth, while the
multiplicative height constraint
\(\|u\|_\infty\|v\|_\infty\leq X\) produces the additional logarithmic
factor.  The power of \(X\), rather than the logarithmic factor, is
what enters the local Euler factor.

\subsubsection{The quadratic Veronese cone in three variables}

The preceding quadratic cone arose from the Veronese embedding of
\(\mathbf P^1\).  We now consider the corresponding quadratic
Veronese construction in three variables.  The projective quadratic
Veronese map sends \([a:b:c]\) to the six quadratic monomials
\([a^2:b^2:c^2:ab:ac:bc]\).  Its affine cone can be described as the
space of symmetric \(3\times3\) matrices of rank one.

Thus we identify a point with
\[
        M=
        \begin{pmatrix}
        x_{11}&x_{12}&x_{13}\\
        x_{12}&x_{22}&x_{23}\\
        x_{13}&x_{23}&x_{33}
        \end{pmatrix},
\]
and impose the condition that all \(2\times2\) minors vanish.  These
minor equations cut out the rank-one symmetric cone in \(\A^6\).
Again, this is not a graph over an unrestricted subset of the ambient
coordinates.  The rank-one equations impose several nonlinear
divisibility relations among the six integral entries.

The advantage of the symmetric rank-one description is that primitive
integral points are controlled by primitive integral vectors in
\(\Z^3\).  We record this explicitly.

\begin{lemma}\label{lem:primitive-Veronese}
A primitive positive integral point on the quadratic Veronese cone is
uniquely of the form
\[
        \nu_2(a,b,c)
        =
        (a^2,b^2,c^2,ab,ac,bc),
\]
where \(a,b,c\in\Z_{>0}\) and \(\gcd(a,b,c)=1\).
\end{lemma}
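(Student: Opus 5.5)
The plan is to mimic the proof of Proposition~\ref{prop:quadratic-cone-density} for \(XZ=Y^2\), replacing the pair \((x,z)\) by the three diagonal entries. Let \(P=(x_{11},x_{22},x_{33},x_{12},x_{13},x_{23})\) be a primitive positive integral point on the cone, viewed as a positive symmetric rank-one matrix \(M\).

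\textbf{Step 1: the diagonal entries are coprime.} Suppose a prime \(p\) divides both \(x_{ii}\) and \(x_{jj}\) with \(i\neq j\). The vanishing of the minor \(x_{ii}x_{jj}-x_{ij}^2\) gives \(p^2\mid x_{ij}^2\), hence \(p\mid x_{ij}\).

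Now let \(k\) be the remaining index. The minors \(x_{ii}x_{jk}=x_{ij}x_{ik}\) and \(x_{ii}x_{kk}=x_{ik}^2\) relate \(x_{kk},x_{ik},x_{jk}\) to the others. It is cleaner, however, to argue prime by prime via valuations. Since \(M\) has rank one and positive entries, one has \(x_{ij}^2=x_{ii}x_{jj}\) for all \(i,j\). Writing \(e_i=v_p(x_{ii})\), this gives \(v_p(x_{ij})=(e_i+e_j)/2\).

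Primitivity at \(p\) means some entry has valuation \(0\). An off-diagonal valuation \((e_i+e_j)/2\) vanishes only if \(e_i=e_j=0\). Hence primitivity at \(p\) forces \(\min_i e_i=0\).

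\textbf{Step 2: the diagonal entries are squares.} This is the main point. For each \(i\) we have \(e_i+e_j\) even for all \(j\), since \(x_{ij}^2=x_{ii}x_{jj}\). Taking \(j\) with \(e_j=0\) shows every \(e_i\) is even. Since this holds for all primes \(p\) and the \(x_{ii}\) are positive, we get \(x_{ii}=a_i^2\) with \(a_i>0\) uniquely determined.

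Then \(x_{ij}^2=a_i^2a_j^2\) together with positivity gives \(x_{ij}=a_ia_j\). So \(P=\nu_2(a_1,a_2,a_3)\).

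Finally, \(\gcd(a_1,a_2,a_3)=1\). If a prime \(q\) divided all three \(a_i\), it would divide every coordinate of \(\nu_2(a)\), contradicting primitivity.

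\textbf{Step 3: uniqueness and the converse.} For uniqueness, note that \((a,b,c)\) is recovered as the positive square roots of the three diagonal coordinates.

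For the converse, take coprime positive \((a,b,c)\). The matrix \(\nu_2(a,b,c)\) is \(vv^{\mathrm t}\) with \(v=(a,b,c)^{\mathrm t}\), so all \(2\times2\) minors vanish. Any prime dividing all six coordinates divides \(a^2,b^2,c^2\), hence divides \(a,b,c\), which is impossible. So \(\nu_2(a,b,c)\) is a primitive positive point on the cone.

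I expect no real obstacle. The only care needed is in Step 2, where one uses the principal minors \(x_{ij}^2=x_{ii}x_{jj}\) rather than the full set of rank-one equations. One should also make sure that positivity is used to fix the signs of the square roots.
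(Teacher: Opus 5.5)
Your argument is correct, but it takes a different route from the paper's.

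\textbf{The paper's route.} The paper argues through linear algebra. It takes the unique primitive positive vector $u=(a,b,c)^{\mathrm t}$ spanning the column space of $M$, and uses symmetry to write $M=\lambda uu^{\mathrm t}$ with $\lambda\in\Q_{>0}$. It shows $\lambda\in\Z$ because the entries of $uu^{\mathrm t}$ have gcd $1$, and then gets $\lambda=1$ from primitivity. Uniqueness comes from uniqueness of the primitive generator of the column space.

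\textbf{Your route.} You work prime by prime, using only the three principal-minor relations $x_{ij}^2=x_{ii}x_{jj}$. Primitivity forces $\min_i v_p(x_{ii})=0$. Parity of $v_p(x_{ii})+v_p(x_{jj})$ then forces every diagonal valuation to be even. So the diagonal entries are squares and the off-diagonal entries are the corresponding products.

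\textbf{Comparison.} Your argument is more elementary and avoids the row-space/column-space discussion. It also shows that, for positive points, the principal minors alone already cut out the cone, and it extends verbatim to rank-one symmetric $n\times n$ matrices. The paper's argument gives a structural statement: every positive integral point is $\lambda\,\nu_2(u)$, with $\lambda$ equal to the gcd of its entries. That is exactly the decomposition used in Proposition~\ref{prop:cone-density}.

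\textbf{A small correction.} The heading of your Step~1 and its first paragraph suggest that the diagonal entries are pairwise coprime. That is false here, unlike for $XZ=Y^2$. For instance, $\nu_2(2,2,1)=(4,4,1,4,2,2)$ is primitive, yet $x_{11}=x_{22}=4$. What you actually prove and use is the joint statement $\min_i v_p(x_{ii})=0$ for every $p$, that is, $\gcd(x_{11},x_{22},x_{33})=1$. You should delete the abandoned pairwise paragraph and retitle the step accordingly.
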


\begin{proof}
Let \(M\) be a positive primitive integral symmetric matrix of rank
one.  Its column space is a one-dimensional rational subspace of
\(\Q^3\).  There is therefore a unique primitive positive integral
vector \(u=(a,b,c)^{\mathrm t}\) spanning this column space.

Since \(M\) has rank one, it can be written as \(uv^{\mathrm t}\) for
some rational vector \(v\).  Symmetry forces the row space and column
space to be the same one-dimensional subspace.  Hence \(v\) is a
rational multiple of \(u\), and consequently \(M=\lambda uu^{\mathrm
t}\) for some \(\lambda\in\Q_{>0}\).

Write \(\lambda=A/B\) in lowest terms.  Integrality of \(M\) implies
that \(B\) divides each of the six integers
\(a^2,b^2,c^2,ab,ac,bc\). Moreover, $\gcd(\nu_2((a,b,c))=1$
Indeed, if a prime \(p\) divided all six, it would in particular divide
\(a^2,b^2,c^2\), and hence divide \(a,b,c\), contradicting
\(\gcd(a,b,c)=1\).  It follows that \(B=1\), so \(\lambda\) is a
positive integer.

The same argument shows that the entries of \(uu^{\mathrm t}\) have gcd one. Therefore, the greatest common divisor of
the entries of \(M=\lambda uu^{\mathrm t}\) is \(\lambda\).  Since
\(M\) is primitive, we must have \(\lambda=1\).  This proves that
\(M=uu^{\mathrm t}\), which gives the six coordinates stated.

Conversely, suppose \(\gcd(a,b,c)=1\).  The matrix \(uu^{\mathrm t}\)
has rank one entries and positive integral entries.  If a prime divided all
six entries, it would divide \(a^2,b^2,c^2\), and hence \(a,b,c\),
which is impossible. Thus, the corresponding point is primitive.
Uniqueness follows from the uniqueness of the positive primitive
generator of the column space.
\end{proof}

\begin{proposition}\label{prop:Veronese-density}
For the quadratic Veronese cone in \(\A^6\),
\[
        \mathfrak d^{\vis}(\cY_{\nu_2})
        =
        \frac1{\zeta(3/2)},
        \qquad
        \mathfrak d_p^{\vis}(\cY_{\nu_2})
        =
        1-p^{-3/2}.
\]
Visibility is locally detectable, and the global density is the Euler
product of the local densities.
\end{proposition}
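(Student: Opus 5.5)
We reduce the statement to Proposition~\ref{prop:cone-density}. By Lemma~\ref{lem:primitive-Veronese}, primitive positive points on the cone correspond bijectively to primitive positive vectors $u=(a,b,c)\in\Z_{>0}^3$ with $\gcd(a,b,c)=1$, via $u\mapsto\nu_2(u)$. Exact local detectability, and the criterion that $P$ is $p$-adically visible if and only if $p$ does not divide all six coordinates, are already given by Proposition~\ref{prop:cone-local-global}, since the cone is cut out by the homogeneous $2\times 2$ minor equations. So the only real work is to find the growth of the primitive counting function $P_{\nu_2}(X)$.

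For the height, note that all entries are positive. Each cross term satisfies $ab\le\max\{a^2,b^2\}$, so
\[
H(\nu_2(a,b,c))=\max\{a,b,c\}^2=\|u\|_\infty^2 .
\]
Hence $P_{\nu_2}(X)$ equals the number of primitive positive triples with $\max\{a,b,c\}\le X^{1/2}$. Set $T=X^{1/2}$. Möbius inversion, exactly as in the proof of Proposition~\ref{prop:quadratic-cone-density}, gives
\[
\sum_{q\le T}\mu(q)\lfloor T/q\rfloor^3=\frac{T^3}{\zeta(3)}+O(T^2),
\]
since in three dimensions the error sum $\sum_q T^2/q^2$ converges. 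Therefore
\[
P_{\nu_2}(X)=\frac{X^{3/2}}{\zeta(3)}+O(X),
\]
that is, \eqref{PV} holds with $c=1/\zeta(3)$, $\alpha=3/2$ and $\beta=0$.

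Since $\alpha=3/2>1$, Proposition~\ref{prop:cone-density} then gives directly:
\begin{itemize}
\item $N_{\nu_2}(X)\sim \zeta(3/2)X^{3/2}/\zeta(3)$;
\item $\mathfrak d^{\vis}(\cY_{\nu_2})=1/\zeta(3/2)$;
\item $\mathfrak d_p^{\vis}(\cY_{\nu_2})=1-p^{-3/2}$, via the bijection of $p$-adically invisible points of height $\le X$ with all branch points of height $\le X/p$;
\item $\prod_p(1-p^{-3/2})=1/\zeta(3/2)$, by absolute convergence of the Euler product at $s=3/2$.
\end{itemize}

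The step needing the most care is this application of Proposition~\ref{prop:cone-density}. The relevant scaling is ordinary ambient scaling $P\mapsto dP$, which multiplies the height by $d$. It is \emph{not} the parameter scaling $u\mapsto du$, which multiplies the height by $d^2$. This is why the exponent becomes $3/2$ rather than $3$. We will therefore check explicitly that every positive integral point of the cone is uniquely of the form $d\,\nu_2(u)$ with $d=\gcd$ of its coordinates. This is exactly the decomposition used in \eqref{eq:cone-total-from-primitive}, and with it the hypotheses of Proposition~\ref{prop:cone-density} are met. Everything else is routine.
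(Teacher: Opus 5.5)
Your proposal is correct and follows the paper's proof essentially step for step. Both arguments use Lemma~\ref{lem:primitive-Veronese} to parametrize primitive points, the height identity $H(\nu_2(a,b,c))=\max\{a,b,c\}^2$, the M\"obius count $P_{\nu_2}(X)=X^{3/2}/\zeta(3)+O(X)$, and then Proposition~\ref{prop:cone-density} with $\alpha=3/2$, $\beta=0$, $c=1/\zeta(3)$, together with Proposition~\ref{prop:cone-local-global} for exact local detectability.
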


\begin{proof}
By Lemma~\ref{lem:primitive-Veronese}, primitive positive integral
points are parametrized bijectively by primitive positive triples
\((a,b,c)\).  Put \(T=\max\{a,b,c\}\).  Each of the six coordinates
\(a^2,b^2,c^2,ab,ac,bc\) is at most \(T^2\), while one of the first
three coordinates is exactly \(T^2\). Hence, the maximum height of
\(\nu_2(a,b,c)\) is exactly \(T^2\).

It follows that the primitive points of height at most \(X\) correspond
precisely to primitive positive triples satisfying
\(a,b,c\leq X^{1/2}\).  We count these by M\"obius inversion.  For a
positive real number \(T\), the number of primitive triples in the box
\([1,T]^3\) is
\[
        \sum_{q\leq T}
        \mu(q)
        \left\lfloor\frac{T}{q}\right\rfloor^3.
\]
Expanding the floor gives
\(T^3\sum_{q\leq T}\mu(q)q^{-3}+O(T^2\sum_{q\leq T}q^{-2})+
O(T\log T)+O(T)\).  Since \(\sum q^{-2}\) converges, the total error is
\(O(T^2)\).  Moreover,
\(\sum_{q\geq1}\mu(q)q^{-3}=1/\zeta(3)\), while the tail beyond \(T\)
contributes only \(O(T)\) after multiplication by \(T^3\).  Hence the
number of primitive positive triples is
\(T^3/\zeta(3)+O(T^2)\).

Taking \(T=X^{1/2}\), we obtain
\[
        P_{\nu_2}(X)
        =
        \frac{X^{3/2}}{\zeta(3)}
        +
        O(X).
\]
Thus Proposition~\ref{prop:cone-density} applies with
\(\alpha=3/2\), \(\beta=0\), and \(c=1/\zeta(3)\).  It follows that
\[
        N_{\nu_2}(X)
        \sim
        \frac{\zeta(3/2)}{\zeta(3)}X^{3/2}.
\]
Consequently the relative density of primitive, and therefore globally
visible, points is \(1/\zeta(3/2)\).

For a fixed prime \(p\), the same proposition gives
\(\mathfrak d_p^{\vis}(\cY_{\nu_2})=1-p^{-3/2}\).  The appearance of
the fractional exponent can also be read directly from the counting
function.  Since the total number of points grows like \(X^{3/2}\),
division of an ambient point by \(p\) changes the asymptotic mass of a
height ball by the factor \(p^{-3/2}\).  Proposition~\ref{prop:cone-local-global}
gives exact local detectability, and Euler's product gives
\(\prod_p(1-p^{-3/2})=1/\zeta(3/2)\).
\end{proof}

The fractional exponent in this example deserves emphasis.  Neither
the ambient dimension \(6\) nor the dimension \(3\) of the affine
Veronese cone appears in the Euler factor.  The exponent \(3/2\) comes
from the relation between the natural parameters and the ambient
height.  Primitive triples \((a,b,c)\) of size \(T\) occur with order
of magnitude \(T^3\), while their Veronese images have height \(T^2\).
Thus a height bound \(X\) corresponds to a parameter bound
\(T=X^{1/2}\), producing \(X^{3/2}\) primitive points.  This example
shows in particular that the zeta exponent attached to a sparse
visibility problem need not be integral.

\section{Polynomial branches}
We now turn to nonhomogeneous polynomial families.  
Let $P(T)\in \Z[T]$ be a monic polynomial with $P(0)=0$ and let
$m=A/B\in \Q_{>0}$ with
$(A,B)=1, B>0.$
Consider the branch
\[
\phi_m(t)
=
\left(t,\frac{A}{B}P(t)\right).
\]
For a point on the branch to be integral, the first coordinate $t$ is integral and since $(A,B)=1$, the second coordinate is integral if and only if $B\mid P(x)$.
\begin{definition}
For $B\geq 1$, define
\[
T_B(P):=\{x\in \Z_{>0}:B\mid P(x)\}.
\]
For a prime $p$, write $e_p:=v_p(B)$. For $x\in T_B(P)$, define
\[
\lambda_p(x)
:=
\min\{v_p(x),\,v_p(A)+v_p(P(x))-e_p\}.
\]
\end{definition}
In this notation, we have, for $x\in T_B(P), 
v_p(\phi_m(x))=\lambda_p(x).$




We can now write the global visibility and p-adic visibility in terms of $\lambda_p$. 
Integral points on the branch are precisely the points $\phi_m(x)$ with
$x\in T_B(P)$.  Since the first coordinate is $x$, the order on the branch
is the usual order on positive integers. Thus $\phi_m(x)$ is globally
visible precisely when there is no element $s\in T_B(P)$ with $0<s<x$,
which is equivalent to $x$ being the smallest element of $T_B(P)$. Further, it is $p$-adically visible if and only if
$
\lambda_p(x)\leq \lambda_p(s)$. This can also be used to write a local defect criterion for polynomial branches. 

\begin{corollary}[Local defect criterion for polynomial branches]
Let $x\in T_B(P)$.  Then $\phi_m(x)$ is locally visible at every prime but
not globally visible if and only if there exists $s\in T_B(P)$ with
$0<s<x$, and for every $u\in T_B(P)$ satisfying $0<u<x$ and every prime
$p$, one has
\[
\lambda_p(x)\leq \lambda_p(u).
\]
\end{corollary}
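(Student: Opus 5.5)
This is a direct specialization of Proposition~\ref{prop:obstruction} to the branch $\phi_m$, $m=A/B$. The work is to rewrite the set $E_m(t)$ of earlier integral parameters, and the minimum valuations, in the notation $T_B(P)$ and $\lambda_p$.

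\emph{Step 1: describe the earlier integral parameters.} A parameter $s>0$ gives an integral point $\phi_m(s)=(s,AP(s)/B)$ exactly when $s\in\Z_{>0}$ and $B\mid AP(s)$. Since $\gcd(A,B)=1$, the second condition is $B\mid P(s)$. So the integral parameters on the branch form $T_B(P)$, and for $x\in T_B(P)$ we get
\[
E_m(x)=\{u\in T_B(P):0<u<x\}.
\]

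\emph{Step 2: identify the valuations.} For $u\in T_B(P)$ we have
\[
v_p(\phi_m(u))=\min\{v_p(u),\,v_p(A)+v_p(P(u))-v_p(B)\}=\lambda_p(u).
\]
This uses the convention $v_p(0)=+\infty$ when $P(u)=0$; since $P$ has nonnegative coefficients and $u>0$, this case does not actually occur.

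\emph{Step 3: apply Proposition~\ref{prop:obstruction}.} By Step~1, condition (i) there says that some $s\in T_B(P)$ satisfies $0<s<x$. By Step~2, condition (ii) there is the inequality $\lambda_p(x)\le\lambda_p(u)$ for every $u\in E_m(x)$ and every prime $p$. Together these are exactly the two clauses of the corollary. The statement ``locally visible at every prime but not globally visible'' is membership in $\cB^{\bad}$, as defined in \eqref{defect set}, so the equivalence follows.

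No step is a real obstacle. The only point needing care is the coprimality reduction from $B\mid AP(s)$ to $B\mid P(s)$, together with the observation that every point on the branch is determined by its first coordinate $s=t$, so the branch order matches the usual order on integers.
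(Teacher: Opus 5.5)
Your proposal is correct and follows the paper's own route. The paper also identifies the integral parameters on the branch with $T_B(P)$, using $\gcd(A,B)=1$ and the fact that the first coordinate is the parameter, so the branch order is the usual order on integers. It likewise identifies $v_p(\phi_m(u))$ with $\lambda_p(u)$ and then reads the corollary off Proposition~\ref{prop:obstruction}, exactly as you do.
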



Theorem \ref{thm:polynomial-local-global} is a classification result on integral polynomials for which visibility is locally-detectable. Using the above criterion, applying Schur's theorem, Hensel lifting, and the Chinese Remainder theorem, we construct locally defect sets for polynomial families proving Theorem \ref{thm:polynomial-local-global}.
\begin{proof}[Proof of Theorem \ref{thm:polynomial-local-global}]
If $P$ is a monomial, then the family is locally detectable by Theorem \ref{thm:local-detectability}.
Suppose that \(P(x)\) is not a monomial. Let \(r\geq 1\) be the
smallest index such that \(a_r\neq 0\). Then
\[
P(x)=x^rR(x),
\]
where $R(x)\in \mathbb Z[x]$, $R(0)=a_r>0$, and \(R(x)\) is nonconstant. We shall construct a point which is not globally visible but is
\(p\)-adically visible for every prime \(p\). To construct such points, we use primes relatively prime to $a_r$. To ensure, further global invisibity properties, we impose additional conditions on such primes. We claim that such primes exist.

\smallskip

\noindent\emph{Claim.} There exist a prime \(\ell\nmid a_r\) and an integer
\(c\geq 1\), with $\gcd(c,a_r)=1$,
such that $\ell^r\mid R(c)$.

\smallskip

\noindent\emph{Proof of the Claim.}
Let $H(x)$ be a primitive non constant irreducible factor of
$R(x)$. By Schur's theorem \cite{Schur1912}, infinitely many primes divide
at least one of the values \(H(m)\), with \(m\in\mathbb Z\). Choose a prime $l$ from this set avoiding the bad primes $\{p \ \text{prime}: p|a_ra_d\text{disc}(H)\}$, where $a_d$ is the leading coefficient of $R(x)$.  Then $H$ has a
root modulo $\ell$. Since $\ell\nmid\text{disc}(H)$,
this root is simple. By Hensel's lemma, it lifts to a root $c_0\in \mathbb Z
$ modulo $\ell^r$. Consequently,
$R(c_0)\equiv 0\pmod{\ell^r}$.
Finally, using the Chinese remainder theorem, we choose a positive representative \(c\geq1\) such that $c\equiv c_0\pmod{\ell^r}$ and $c\equiv 1\pmod p$ for every prime \(p\mid a_r\). Then $\ell^r\mid R(c)$ and $\gcd(c,a_r)=1$. This proves the claim.

\smallskip

Now set $u=c\ell$. We show that the point $P=(lc, R(lc))$ is a primitive globally invisible point. Clearly, $u<c$ and both the points $P$ and $Q=(c, R(c)$ lie on the branch $q=1/u^r$. Moreover, since $l^r|R(c)$, $Q=\phi_q(c)
=\left(c,\frac{R(c)}{\ell^r}\right)$ 
is an integral point on the branch. 
We show that $P$ is primitive. 
If $p\mid cl$. Then  $R(c\ell)\equiv R(0)\pmod p$ and since $\gcd(c,a_r)=1$ and $\ell\nmid a_r$, we have $p\nmid R(c\ell)$. Rpimitivity implies that
$v_p(P)=0$
for every prime $p$. Every earlier integral point \(Q'\) on the same branch
satisfies $v_p(Q')\geq 0$.
Hence $v_p(P)\leq v_p(Q')$
for every prime \(p\) and every earlier integral point \(Q'\). Therefore \(P\)
is \(p\)-adically visible for every prime \(p\).

Thus
\[
P\in \bigcap_p \cB_p^{\operatorname{vis}}(\cY_P)\setminus \cB^{\operatorname{vis}}(\cY_P).
\]
Hence local detectability fails whenever $P(x)$ is not a monomial. This
completes the proof.
\end{proof}

\begin{figure}[t]
\centering
\begin{tikzpicture}[x=1.15cm,y=.42cm,>=stealth]
  \draw[->] (0,0) -- (4.65,0) node[right] {$x$};
  \draw[->] (0,0) -- (0,11.3) node[above] {$y$};

  \draw[domain=0:4.25,samples=100,smooth,variable=\t]
        plot ({\t},{0.5*\t*(\t+1)});

  \foreach \x/\y in {1/1,2/3,3/6,4/10}
       \fill[gray] (\x,\y) circle (1.4pt);

  \fill (1,1) circle (2.2pt)
        node[above left] {$Q=(1,1)$};
  \fill (2,3) circle (2.2pt)
        node[above right] {$P=(2,3)$};

  \draw[->] (1.12,1.35) -- (1.88,2.65)
       node[midway,right] {$Q\text{ occurs first}$};

  \node[align=left] at (3.05,8.2)
       {$y=\dfrac{x(x+1)}2$\\[-1mm]$q=\dfrac12$};
\end{tikzpicture}
\caption{A defect point for \(P(T)=T(T+1)\).  On the branch \(q=1/2\),
the integral point \(Q=(1,1)\) occurs before \(P=(2,3)\), so \(P\) is
not globally visible.  Since \(\gcd(2,3)=1\), the point \(P\) is
nevertheless \(p\)-adically visible for every prime \(p\).}
\label{fig:polynomial-defect}
\end{figure}
Although the local defect set is nonempty, we show that these local-global failures form a sparse subset of $\mathbb Z_{\ge1}^2$, of natural density zero. This is evident directly from the density one result of
Chaubey--Pandey--Regavim \cite{ChaubeyPandeyRegavim2026}. 

\begin{theorem}\label{thm:polynomial-defect-density}
Let
\[
        P(T)=a_dT^d+a_{d-1}T^{d-1}+\cdots+a_1T\in\Z[T],
\]
where $a_d>0$, $a_i\geq0$ and $\gcd(a_d,a_{d-1},\dots,a_1)=1$, and set
\[
\cY_P:=\{(\A^1,\phi_q):q\in\Q_{>0}\},
\qquad
\phi_q(t)=(t,qP(t)).
\]
Then the local defect set
\[
       \cB^{\bad}(\cY_P)
        :=
        \left(\bigcap_p \cB_p^{\operatorname{vis}}(\cY_P)\right)
        \setminus
        \cB^{\operatorname{vis}}(\cY_P)
\]
has density zero relative to \(\cB(\cY_P)\).
\end{theorem}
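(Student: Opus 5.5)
The plan is to reduce the statement to the density-one theorem of Chaubey--Pandey--Regavim \cite{ChaubeyPandeyRegavim2026}, treating the monomial case separately through Theorem~\ref{thm:local-detectability}.

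\emph{Step 1: identify the branch locus.} For $x\in\Z_{\geq1}$ one has $P(x)>0$, since $a_d>0$, all $a_i\geq0$ and $P(0)=0$. So every $(x,y)\in\Z_{\geq1}^2$ lies on the branch $q=y/P(x)\in\Q_{>0}$ with parameter $t=x$. Conversely, every integral point of a branch with $t>0$ has positive coordinates. The value $q$ is determined by the point, so distinct branches do not meet off the origin, and $\cY_P$ is a visibility datum. Hence $\cB(\cY_P)=\Z_{\geq1}^2$, and since $H(x,y)=\max(x,y)$ we get $\#\cB(\cY_P;X)=\lfloor X\rfloor^2$. Relative density is therefore ordinary natural density in the plane.

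\emph{Step 2: the monomial case.} If $P$ is a monomial, the hypothesis $\gcd(a_d,\dots,a_1)=1$ forces $P(T)=T^d$. Then $\phi_q(t)=(t,qt^d)$ is positive-weight homogeneous with weights $(1,d)$. Theorem~\ref{thm:local-detectability} gives $\cB^{\bad}(\cY_P)=\varnothing$, and there is nothing more to prove.

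\emph{Step 3: the non-monomial case.} Directly from the definition \eqref{defect set},
\[
\cB^{\bad}(\cY_P)\subseteq \cB(\cY_P)\setminus\cB^{\vis}(\cY_P)=\bigcup_{X}U(P;X).
\]
It therefore suffices to show $\#U(P;X)=o(X^2)$. This is exactly the density-one statement of \cite{ChaubeyPandeyRegavim2026} for non-monomial $P$ with nonnegative coefficients and $P(0)=0$. Dividing by $\lfloor X\rfloor^2$ gives $\mathfrak d^{\bad}(\cY_P)=0$.

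\emph{Main obstacle.} The delicate point is checking that the cited result covers the present generality, in particular non-monic $P$ whose coefficients have gcd $1$. If it does not, I would prove $\#U(P;X)=o(X^2)$ directly. A point $(x,y)$ is invisible exactly when there is some $s<x$ with $B\mid P(s)$, where $B=P(x)/\gcd(y,P(x))$. For a fixed divisor $g=\gcd(y,P(x))$ there are at most $X/g$ admissible $y$.

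\begin{itemize}
\item Pairs with $g\geq X^{1-\delta}$ contribute at most about $\sum_{x\leq X}\tau(P(x))X^{\delta}\ll X^{1+\delta+\varepsilon}$.
\item Pairs with $g< X^{1-\delta}$ have $B\geq P(x)/X^{1-\delta}$, which is large because $d\geq2$. For such $B$ I need that $B\mid P(s)$ and $B\mid P(x)$ with $0<s<x$ is rare. Take the resultant-type quantity $(P(x)-P(s))/(x-s)$, which is a nonzero polynomial in $x,s$ and is divisible by $B/\gcd(B,x-s)$. Counting pairs $(x,s)$ via divisor bounds should then give $o(X^2)$.
\end{itemize}

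Making this last count uniform is where the real work would lie, and it is precisely where the non-monomial hypothesis has to enter.
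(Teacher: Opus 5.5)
Your proposal is correct and matches the paper's proof. The monomial case is handled by Theorem~\ref{thm:local-detectability}, giving $\cB^{\bad}(\cY_P)=\varnothing$. In the non-monomial case, the inclusion $\cB^{\bad}(\cY_P)\subseteq\cB(\cY_P)\setminus\cB^{\vis}(\cY_P)$ together with the density-one theorem of \cite{ChaubeyPandeyRegavim2026} finishes the argument. The paper cites that result in exactly this generality, so your fallback direct count is not needed.
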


\begin{proof}
If \(P\) is a monomial, then \(\cY_P\) is a positive-weight homogeneous family,
so local detectability follows from Theorem \ref{thm:local-detectability}. Hence in this case, $\cB^{\bad}(\cY_P)=\emptyset$ and in particular has density $0$.

If \(P\) is not a monomial, then the authors in \cite{ChaubeyPandeyRegavim2026} prove that
\[
        \mathfrak d_{\cB(\cY_P)}\bigl(
        \cB^{\operatorname{vis}}(\cY_P)
        \bigr)
        =
        1.
\]
Thus the non-visible locus $\cB(\cY_P)\setminus
        \cB^{\operatorname{vis}}(\cY_P)$
has density zero. Since
\[
       \cB^{\bad}(\cY_P)
        \subseteq
        \cB(\cY_P)\setminus
        \cB^{\operatorname{vis}}(\cY_P),
\]
the defect set also has density zero.
\end{proof}

\section{Quantitative bounds for the invisible loci and the defect set}
\label{s 6}

\subsection{Bounds for invisible loci}
In the preceding section, we used the authors' work in 
\cite{ChaubeyPandeyRegavim2026} to show that the failure of local
detectability for non-monomial polynomial families is nevertheless a
density-zero phenomenon.  We begin by recalling their stronger
quantitative result.  Let
\[
        P(T)=a_dT^d+a_{d-1}T^{d-1}+\cdots+a_1T\in\Z[T],
        \qquad d\geq2,
\]
where \(a_d>0\), all \(a_i\geq0\), and
\(\gcd(a_d,\dots,a_1)=1\).  As before, let \(\cY_P\) denote the family
\(\phi_q(t)=(t,qP(t))\), with \(q\in\Q_{>0}\), and write
\(U(P):=\cB(\cY_P)\setminus\cB^{\operatorname{vis}}(\cY_P)\) for the
non-visible locus and \(U(P;X):=U(P)\cap[1,X]^2\).

When \(P\) is separable of degree \(d\), it is shown in
\cite[Lemma 2]{ChaubeyPandeyRegavim2026} that, if $P$ is separable, for every
\(\varepsilon>0\), 
\begin{equation}\label{eq:CPR-nonvisible-bound}
        \#U(P;X)
        \ll_{P,\varepsilon}
        X^{\,2-\frac1{2d-1}+\varepsilon}.
\end{equation}

The purpose of this section is to prove the sharpened estimate of Theorem \ref{thm:intro-separable-polynomial}. For \(d\geq3\), the improvement is genuine since
\[
        \frac32+\frac1{2d}
        <
        2-\frac1{2d-1}.
\]
\par The first step is an expression for $\#U(P;X)$ valid without the separability
hypothesis.
\begin{lemma}\label{lem:polynomial-gcd-reduction}
For \(X\geq2\), one has
\begin{equation}\label{eq:polynomial-gcd-reduction}
        \#U(P;X)
        \leq
        X
        \sum_{x\leq X}
        \frac1{P(x)}
        \sum_{1\leq s<x}
        \gcd\bigl(P(x),P(s)\bigr).
\end{equation}
\end{lemma}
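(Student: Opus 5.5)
We count invisible points column by column. A point of $\cB(\cY_P)$ in $[1,X]^2$ has the form $(x,y)$ with $1\le x\le X$ and $1\le y\le X$. It lies on the branch $q=y/P(x)$. Write $q=A/B$ in lowest terms, so that
\[
B=\frac{P(x)}{\gcd(P(x),y)}.
\]
The integral points on this branch have first coordinates in $T_B(P)$, and the order on the branch is the order of first coordinates. Hence $(x,y)$ is invisible exactly when there is some $s$ with $1\le s<x$ and $B\mid P(s)$. In that case $B\mid P(x)$ as well, so
\[
B\mid \gcd\bigl(P(x),P(s)\bigr).
\]
Equivalently, $P(x)/\gcd(P(x),P(s))$ divides $\gcd(P(x),y)$, and in particular it divides $y$.

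We bound the invisible points in a fixed column $x$ by a union bound over $s<x$. For fixed $x$ and $s$, set
\[
g_s=\gcd\bigl(P(x),P(s)\bigr),\qquad m_s=\frac{P(x)}{g_s}.
\]
Every invisible point $(x,y)$ witnessed by $s$ has $m_s\mid y$. The number of $y\le X$ divisible by $m_s$ is at most
\[
\frac{X}{m_s}=\frac{X\,g_s}{P(x)}.
\]
No $+1$ term is needed, since $\lfloor X/m\rfloor\le X/m$.

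Summing over $1\le s<x$ and then over $x\le X$ gives
\[
\#U(P;X)\le \sum_{x\le X}\ \sum_{1\le s<x}\frac{X\gcd\bigl(P(x),P(s)\bigr)}{P(x)},
\]
which is exactly \eqref{eq:polynomial-gcd-reduction}.

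The only delicate step is the reduction of invisibility to the divisibility $m_s\mid y$. One must check two things:
\begin{itemize}
\item the branch through $(x,y)$ is determined by the reduced fraction $y/P(x)$, which needs $P(x)>0$; this holds since the coefficients are nonnegative and $a_d>0$;
\item integrality of $\phi_q(s)$ is equivalent to $B\mid P(s)$, because $\gcd(A,B)=1$.
\end{itemize}
The column $x=1$ contributes nothing, which is consistent with the empty inner sum. Separability is not used, as the statement indicates.
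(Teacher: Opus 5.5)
Your proof is correct and follows the paper's argument. Both reduce invisibility witnessed by a fixed $s<x$ to the divisibility $P(x)/\gcd\bigl(P(x),P(s)\bigr)\mid y$, count at most $X\gcd\bigl(P(x),P(s)\bigr)/P(x)$ such $y\le X$, and then take a union bound over $s$ before summing over $x$. The only cosmetic difference is that you pass through the reduced denominator $B$ of $y/P(x)$, whereas the paper writes the integrality condition as $P(x)\mid yP(s)$ and cancels $g=\gcd\bigl(P(x),P(s)\bigr)$ directly.
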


\begin{proof}
Fix \(1\leq x,y\leq X\).  The point \((x,y)\) lies on the unique
branch with parameter \(q=y/P(x)\).  It is globally invisible if there is an
integer \(s<x\) for which the earlier point
\(\phi_q(s)=(s,yP(s)/P(x))\) is integral.  This is equivalent to
\(P(x)\mid yP(s)\).

Put \(g=\gcd(P(x),P(s))\), and write \(P(x)=gm\) and \(P(s)=gn\)
with \((m,n)=1\).  The preceding divisibility condition is then
equivalent to \(m\mid y\).  Thus, for fixed \(x\) and \(s\), there are
at most \(Xg/P(x)\) possible values of \(y\).  Taking the union over
\(s<x\) and then summing over \(x\leq X\) gives
\eqref{eq:polynomial-gcd-reduction}.
\end{proof}
Questions on the distribution of expressions concerning common divisors of
polynomial values have been studied from several perspectives.  The
normalized double gcd sum closest to the quantity $\mathcal G_P(X)$ below
appears in the recent work of the authors in
\cite{LP26}. For the distribution and density of
$\gcd(P_1(\mathbf n),\ldots,P_s(\mathbf n))$, see
\cite{PoonenSquarefree}, \cite{WangStanley}, and \cite{BodinDebes}.  Resultant-based questions
concerning the possible size and range of
\(\gcd(F(n),G(n))\) have also been studied in the past. 

We study the following gcd sum:
\begin{equation}\label{eq:def-gcd-average}
        \mathcal G_P(X)
        :=
        \sum_{x\leq X}
        \frac1{P(x)}
        \sum_{s<x}\gcd\bigl(P(x),P(s)\bigr).
\end{equation}
Using Lemma~\ref{lem:polynomial-gcd-reduction}, we have
\(\#U(P;X)\leq X\mathcal G_P(X)\). For \(q\geq1\), let
\[\rho_P(q):=\#\{r\bmod q:P(r)\equiv0\pmod q\}.\]

\begin{lemma}\label{lem:rho-basic-bound}
Suppose that \(P\in\Z[T]\) is separable of degree $d\ge 2$.  There exists
a constant \(C_P>0\) such that
\[
        \rho_P(q)\leq C_Pd^{\omega(q)}
\]
for every \(q\geq1\).
\end{lemma}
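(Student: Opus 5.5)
The plan is to prove the bound one prime power at a time and then glue the pieces together. First, $\rho_P$ is multiplicative in $q$ by the Chinese remainder theorem. So writing $q=\prod p^{k}$, it suffices to find a constant $c_p\geq 1$ with $\rho_P(p^k)\leq c_p\,d$ for every $k\geq1$, where $c_p=1$ for all but finitely many primes $p$. We can then take $C_P=\prod_p c_p$, a finite product.

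Let $\Delta:=\operatorname{disc}(P)$, which is a nonzero integer because $P$ is separable; here $P$ is the polynomial of Theorem~\ref{thm:intro-separable-polynomial}, with $a_d>0$ and coprime coefficients. We split the primes into two classes.

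\emph{Good primes}, $p\nmid a_d\Delta$. Then $P\bmod p$ has degree $d$ and is separable over $\mathbb F_p$, so it has at most $d$ roots modulo $p$, each of them simple. By Hensel's lemma, each simple root lifts to exactly one root modulo $p^k$. Conversely, every root modulo $p^k$ reduces to a root modulo $p$. Hence $\rho_P(p^k)=\rho_P(p)\leq d$, and we may take $c_p=1$.

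\emph{Bad primes}, $p\mid a_d\Delta$. There are finitely many of these, and for each we need a bound on $\rho_P(p^k)$ that is uniform in $k$. This is the main obstacle, since Hensel's lemma no longer applies directly. The argument goes as follows.

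Let $r$ be a root modulo $p^k$. If $a_d$ is divisible by $p$, the content condition still lets us work with the nonzero reduction. The cleaner route is to use the standard resultant identity $A(T)P(T)+B(T)P'(T)=\Delta$ with $A,B\in\Z[T]$. It shows that $\min\bigl(v_p(P(r)),v_p(P'(r))\bigr)\leq v_p(\Delta)=:\delta$ for every integer $r$. So for $k>2\delta$, every root $r$ of $P$ modulo $p^k$ satisfies $v_p(P'(r))\leq\delta$. The strong form of Hensel's lemma (if $v_p(P(r))>2v_p(P'(r))$, then there is a unique $p$-adic root congruent to $r$ modulo $p^{v_p(P(r))-v_p(P'(r))}$) then shows two things. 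First, each root modulo $p^k$ lies within distance $p^{-(k-\delta)}$ of a root of $P$ in $\Z_p$. Second, $P$ has at most $d$ roots in $\Z_p$, as its degree is $d$. Each $p$-adic root accounts for at most $p^{\delta}$ residues modulo $p^k$. This gives $\rho_P(p^k)\leq d\,p^{\delta}$ for $k>2\delta$. For $k\leq2\delta$ we use the trivial bound $\rho_P(p^k)\leq p^{2\delta}$.

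So we take $c_p=p^{2\delta}$ for bad primes. This yields
$$\rho_P(q)\leq\Bigl(\prod_{p\mid a_d\Delta}p^{2v_p(\Delta)}\Bigr)d^{\omega(q)},$$
which is the claimed bound. The only care needed in the bad-prime step is checking the resultant identity when $p\mid a_d$. Since the identity holds over $\Z[T]$ with a nonzero integer on the right-hand side, no reduction of the degree of $P$ modulo $p$ is used, so this case causes no trouble.
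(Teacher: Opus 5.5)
Your proof is correct, and it takes a genuinely different route from the paper. The paper gives no argument of its own and simply cites \cite[Lemma~2.1(b)]{ChaubeyPandeyRegavim2026}. You make the lemma self-contained in three steps:
\begin{itemize}
\item multiplicativity reduces the bound to prime powers;
\item for $p\nmid a_d\Delta$, Hensel's lemma gives $\rho_P(p^k)=\rho_P(p)\le d$;
\item at the finitely many remaining primes, the bound $\min\bigl(v_p(P(r)),v_p(P'(r))\bigr)\le\delta$ together with the strong form of Hensel's lemma gives $\rho_P(p^k)\le\max(d\,p^{\delta},p^{2\delta})\le d\,p^{2\delta}$, uniformly in $k$.
\end{itemize}

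Your approach buys two things over the citation. First, it gives an explicit constant, namely $C_P=\prod_p p^{2v_p(\Delta)}=\Delta^2$. Second, it shows that neither the content condition nor any hypothesis on the leading coefficient plays a role: the only input is $\Delta\neq0$. The citation buys only brevity. Sharper local constants exist in the literature, but they are irrelevant here, since the application only needs some $C_P$ depending on $P$.

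One small point of precision: the identity produced by the Sylvester matrix is $AP+BP'=\operatorname{Res}(P,P')=\pm a_d\Delta$, not $\Delta$. This costs nothing, and you can fix it in either of two ways:
\begin{itemize}
\item run the bad-prime argument with $\delta=v_p(a_d\Delta)$, which changes only the constant (to $(a_d\Delta)^2$);
\item note that $\Delta$ itself lies in $(P,P')\Z[T]$. To see this, subtract $d$ times the first $P$-row from the first $P'$-row of the Sylvester matrix, so that $a_d$ becomes the only nonzero entry of the first column. Expand along that column, then apply the usual Cramer-rule argument to the remaining minor.
\end{itemize}
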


\begin{proof} See \cite[Lemma 2.1 (b)]{ChaubeyPandeyRegavim2026}. 
\end{proof}
We next use the following estimates of means of the above counting function. All of these can be obtained using partial summation formulas.

\begin{lemma}
\label{lem:rho-perron}
Let \(P\) be fixed and separable of degree \(d\).  Then
\begin{equation}\label{eq:rho-square-average}
        \sum_{q\leq Z}\rho_P(q)^2
        \ll_P Z(\log(Z))^{d^2-1}
        \quad\text{and}\quad
        \sum_{q\leq Z}\frac{\rho_P(q)^2}{q}
        \ll_P(\log(Z))^{d^2}.
\end{equation}
Moreover, for every \(\varepsilon>0\),
\begin{equation}\label{eq:rho-divisor-bound}
        \sum_{q\mid n}\rho_P(q)
        \ll_{P,\varepsilon}n^\varepsilon.
\end{equation}
\end{lemma}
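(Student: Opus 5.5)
The plan is to derive all three estimates from the multiplicativity of \(\rho_P\) and its behaviour at primes. By the Chinese remainder theorem \(\rho_P\) is multiplicative. For a prime \(p\nmid a_d\operatorname{disc}(P)\), Hensel's lemma gives \(\rho_P(p^k)=\rho_P(p)\leq d\) for all \(k\geq1\). For the finitely many primes dividing \(a_d\operatorname{disc}(P)\), Lemma~\ref{lem:rho-basic-bound} bounds \(\rho_P(p^k)\) uniformly in \(k\). Thus \(\rho_P(q)\ll_P d^{\omega(q)}\), and \(\rho_P(q)^2\ll_P (d^2)^{\omega(q)}\leq \tau_{d^2}(q)\), where \(\tau_k\) is the \(k\)-fold divisor function. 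This holds because \(\tau_k(q)\geq k^{\omega(q)}\) and the multiplicative constant from the finitely many bad primes is bounded.

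\textbf{First bound in \eqref{eq:rho-square-average}.} I would use the classical estimate \(\sum_{q\leq Z}\tau_k(q)\ll_k Z(\log Z)^{k-1}\). This gives \(\sum_{q\leq Z}\rho_P(q)^2\ll_P Z(\log Z)^{d^2-1}\) directly, and a generalized divisor function bound of this kind also follows from a Rankin or Shiu-type argument if one prefers not to cite it.

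\textbf{Second bound in \eqref{eq:rho-square-average}.} This follows by partial summation from the first. If \(S(u)=\sum_{q\leq u}\rho_P(q)^2\), then
\[
\sum_{q\leq Z}\frac{\rho_P(q)^2}{q}=\frac{S(Z)}{Z}+\int_1^Z\frac{S(u)}{u^2}\,du\ll_P(\log Z)^{d^2-1}+\int_1^Z\frac{(\log u)^{d^2-1}}{u}\,du\ll_P(\log Z)^{d^2}.
\]
Alternatively, the Euler product bound \(\prod_{p\leq Z}(1+\tau_{d^2}(p)/p+\cdots)\ll(\log Z)^{d^2}\) gives the same estimate.

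\textbf{Estimate \eqref{eq:rho-divisor-bound}.} Here
\[
\sum_{q\mid n}\rho_P(q)\ll_P\sum_{q\mid n}d^{\omega(q)}\leq\tau(n)\,d^{\omega(n)}.
\]
Both factors are \(\ll_\varepsilon n^{\varepsilon/2}\): for \(\tau(n)\) this is the divisor bound, and for \(d^{\omega(n)}\) it follows from \(\omega(n)\ll\log n/\log\log n\). Multiplying the two gives \(\ll_{P,\varepsilon} n^\varepsilon\).

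\textbf{Main obstacle.} The only delicate point is the uniform control of \(\rho_P(p^k)\) at the bad primes. Separability ensures that \(\rho_P(p^k)\) stabilizes, bounded in terms of \(v_p(\operatorname{disc}P)\). I would simply invoke Lemma~\ref{lem:rho-basic-bound} for this rather than reprove it.
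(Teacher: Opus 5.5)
Your proposal is correct and takes essentially the same route as the paper. Both reduce to \(\rho_P(q)^2\ll_P (d^2)^{\omega(q)}\le\tau_{d^2}(q)\) via Lemma~\ref{lem:rho-basic-bound} (your CRT/Hensel discussion just re-derives that lemma), apply the classical mean value of \(\tau_{d^2}\), and obtain the weighted sum by partial summation; the only cosmetic difference is in \eqref{eq:rho-divisor-bound}, where you use \(\sum_{q\mid n}d^{\omega(q)}\le\tau(n)\,d^{\omega(n)}\) with \(\omega(n)\ll\log n/\log\log n\), while the paper computes \(\sum_{q\mid n}d^{\omega(q)}=\prod_{p^\nu\Vert n}(1+d\nu)\le\tau(n)^d\), and both give \(n^\varepsilon\).
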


\begin{proof}
By Lemma~\ref{lem:rho-basic-bound}, there is a constant \(C_P>0\)
such that \(\rho_P(n)\leq C_Pd^{\omega(n)}\) for every \(n\geq1\).
It follows that
\(\rho_P(n)^2\leq C_P^2d^{2\omega(n)}\).  Thus, in order to prove the
first estimate in \eqref{eq:rho-square-average}, we bound
the mean value of \(d^{2\omega(n)}\). 
Let $\tau_k(n)$ be the $k$- fold divisor function, then 
one has,
\[
\tau_k(p^\nu)=\binom{\nu+k-1}{k-1}\geq
\binom{k}{k-1}
=
k,
\]
see \cite[Chapter 13 (13.3)]{Koukoulopoulos}. Therefore, by multiplicativity,
$
\tau_k(n)
\geq
\prod_{p\mid n} k
=
k^{\omega(n)}.
$ 
Now, for $k\ge 2$, we have classically
\[
\sum_{n\leq x} \tau_k(n)
\sim
\frac{x(\log x)^{k-1}}{(k-1)!},
\]
see \cite[Chapter XII]{Titchmarsh}.
Combining all of this with
\(\rho_P(n)^2\ll_Pd^{2\omega(n)}\), we obtain the first estimate:
\[
        \sum_{q\leq Z}\rho_P(q)^2
        \ll_P
        Z(\log(Z))^{d^2-1}.
\]
Furthermore, using this and the partial summation formula, one obtains the estimate of the weighted partial sum in \eqref{eq:rho-square-average}.
It remains to prove \eqref{eq:rho-divisor-bound}. 
For this, we write \(n=\prod_{p^\nu\Vert n}p^\nu\). Then,  $
        \sum_{q\mid n}d^{\omega(q)}
        =
        \prod_{p^\nu\Vert n}(1+d\nu)\leq \prod_{p^\nu\Vert n}(\nu+1)^d=\tau(n)^d.\ll_{d,\epsilon} n^\epsilon$
for every $\epsilon>0$. 
\end{proof}

Large common divisors of \(P(x)\) and \(P(s)\) lead to equations in
which the two polynomial values have a rational ratio.  We now obtain
the uniform estimates for these curves that are needed in the proof of
Theorem~\ref{thm:intro-separable-polynomial}.

\begin{proposition}
\label{prop:generic-ratio-irreducibility}
Let \(P\in\C[T]\) be a separable polynomial of degree \(d\geq2\), and
let
\[
        \mathcal V_P
        :=
        \{P(\alpha):P'(\alpha)=0\}
\]
be its set of finite critical values.  Put
\[
        \mathscr E_P
        :=
        \left\{
        \frac{\beta}{\gamma}:
        \beta,\gamma\in\mathcal V_P
        \right\}
        \subset\C^\times.
\]
If \(c\in\C^\times\setminus\mathscr E_P\), then
\(P(S)-cP(X)\) is irreducible in \(\C[S,X]\).
\end{proposition}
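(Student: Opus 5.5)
The plan is to argue geometrically with the plane curve \(C_c:\ f(S,X):=P(S)-cP(X)=0\). We show that for \(c\notin\mathscr E_P\) this affine curve is smooth and that its points at infinity are distinct. Bezout's theorem then forbids a factorization.

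\emph{Step 1: smoothness of the affine curve.} A singular point \((s,x)\in\C^2\) of \(C_c\) satisfies
\[
P'(s)=0,\qquad cP'(x)=0,\qquad P(s)=cP(x).
\]
Since \(c\neq0\), both \(s\) and \(x\) are critical points of \(P\), so \(\beta:=P(s)\) and \(\gamma:=P(x)\) lie in \(\mathcal V_P\). Separability of \(P\) means that no root of \(P\) is a root of \(P'\). Hence every critical value is nonzero, so \(\gamma\neq0\) and \(c=\beta/\gamma\in\mathscr E_P\). Therefore, for \(c\notin\mathscr E_P\), the affine curve \(C_c\) is nonsingular.

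\emph{Step 2: behaviour at infinity.} Let \(a_d\) be the leading coefficient of \(P\). The top-degree homogeneous part of \(f\) is \(a_d(S^d-cX^d)\). Since \(c\neq0\), this factors into \(d\) pairwise non-proportional linear forms \(S-\zeta X\), where \(\zeta\) runs over the \(d\) distinct \(d\)-th roots of \(c\). Now suppose \(f=FG\) with \(F,G\in\C[S,X]\) both nonconstant. Their top-degree forms \(F_{\rm top}\) and \(G_{\rm top}\) are nonconstant, and \(F_{\rm top}G_{\rm top}=a_d(S^d-cX^d)\). Because the right-hand side is squarefree, \(F_{\rm top}\) and \(G_{\rm top}\) are coprime. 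Consequently the projective closures \(\overline{\{F=0\}}\) and \(\overline{\{G=0\}}\) in \(\mathbf P^2\) have no common point on the line at infinity, since such a point would be a common zero of \(F_{\rm top}\) and \(G_{\rm top}\).

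\emph{Step 3: Bezout.} Both closures are projective curves of positive degree, so by Bezout's theorem they meet in \(\mathbf P^2\). By Step 2, any intersection point must be an affine point \((s,x)\) with \(F(s,x)=G(s,x)=0\). At such a point,
\[
\nabla f=F\,\nabla G+G\,\nabla F=0,
\]
so \((s,x)\) is a singular point of \(C_c\), contradicting Step 1. Hence \(f\) is irreducible in \(\C[S,X]\).

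The only delicate point is Step 1, specifically the use of separability to guarantee \(0\notin\mathcal V_P\), so that the ratio \(\beta/\gamma\) is defined and lies in \(\mathscr E_P\). The remaining steps are routine: the factor bookkeeping at infinity in Step 2 and the gradient computation in Step 3.
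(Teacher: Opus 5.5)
Your proof is correct, and it takes a genuinely different route from the paper. The paper gives no geometric argument of its own. It notes that the critical values of $P$ and $cP$, viewed as maps $\mathbf P^1\to\mathbf P^1$, are $\mathcal V_P\cup\{\infty\}$ and $c\mathcal V_P\cup\{\infty\}$. It then checks that $c\notin\mathscr E_P$ forces $\mathcal V_P\cap c\mathcal V_P=\varnothing$, using separability to keep $0$ out of $\mathcal V_P$ exactly as in your Step 1. Finally it cites Pakovich's criterion: a separated-variable curve whose two functions share exactly one critical value is irreducible.

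Your Step 1 is the same disjointness condition, recast as smoothness of the affine curve $P(S)=cP(X)$. Your Steps 2--3 replace the citation with a direct argument. The shared critical value $\infty$ does no harm because the top form $a_d(S^d-cX^d)$ is squarefree, so two components cannot meet at infinity. They also cannot meet in the affine part without producing a singular point.

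Your approach gives a self-contained, elementary proof using only the Jacobian criterion and weak B\'ezout. It in effect shows that the projective closure of the curve is a smooth plane curve. The paper's approach is shorter and more general, since Pakovich's criterion applies to arbitrary rational functions $F,G$. In particular it covers unequal degrees, where the top form of $F(S)-G(X)$ is not squarefree and your argument at infinity would not apply as written.
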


\begin{proof}
We apply \cite[Proposition~3.1(1)]{PakovichSeparatedVariables}.
In the notation of that result, \(C(F)\) denotes the set of all
critical values of the rational function \(F:\mathbf P^1_\C\to
\mathbf P^1_\C\).

For the polynomial \(P\), the finite critical values are precisely
the elements of \(\mathcal V_P\), while \(\infty\) is also a critical
value because \(P\) is totally ramified at infinity.  Thus $C(P)=\mathcal V_P\cup\{\infty\}$ and similarly, $C(cP)=c\mathcal V_P\cup\{\infty\}$. We claim that
\(\mathcal V_P\cap c\mathcal V_P=\varnothing\).
Indeed, suppose that \(u\) belongs to this intersection.  Then there
exist \(\beta,\gamma\in\mathcal V_P\) such that
\(u=\beta=c\gamma\).  Since \(P\) is separable, \(0\) cannot be a
critical value of \(P\): otherwise \(P(\alpha)=P'(\alpha)=0\) for
some \(\alpha\), and \(\alpha\) would be a multiple root of \(P\).
Hence \(\gamma\neq0\), and therefore
\(c=\beta/\gamma\in\mathscr E_P\), contrary to the hypothesis.

It follows that
\[
        C(P)\cap C(cP)=\{\infty\}.
\]
Thus \(P\) and \(cP\) have exactly one common critical value.
Proposition~3.1(1) of \cite{PakovichSeparatedVariables} therefore
implies that the curve
\[
        P(S)-cP(X)=0
\]
is irreducible.  
\end{proof}

For the finitely many exceptional ratios we need to only rule out linear
components.
The argument for this is essentially that of Lobsenz--Phillips
\cite[Proposition~3.3]{LP26}. Their statement treats
$P\in\mathbb Z[T]$ and a multiplier $c=r/s\in\mathbb Q\cap(0,1)$.
The applications in the following are also entirely in this arithmetic setting.
Nevertheless, we state the lemma for $P\in\mathbb C[T]$ and
$c\in\mathbb R_{>0}\setminus\{1\}$, since the same root-permutation
argument works over $\mathbb C$.
\begin{lemma}
\label{lem:no-linear-ratio-component}
Let \(P\in\mathbb{C}[T]\) have at least two distinct roots, and let
$c\in\mathbb R_{>0}\setminus\{1\}$ with $c\neq1$.  Then the polynomial
\(P(S)-cP(X)\) has no linear factor in \(\mathbb{C}[S,X]\).
\end{lemma}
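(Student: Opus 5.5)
We argue by contradiction: assume $P(S)-cP(X)$ has a linear factor $\ell(S,X)=\alpha S+\beta X+\gamma$ with $(\alpha,\beta)\neq(0,0)$. We then show this forces $c=1$, which the hypothesis excludes. The idea is the root-permutation argument of \cite[Proposition~3.3]{LP26}: a linear component makes an affine map permute the roots of $P$, and finiteness of the root set then pins down the modulus of the multiplier. Throughout, $d:=\deg P\geq1$, which holds because $P$ has at least two distinct roots.

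\emph{Degenerate factors.} Suppose $\alpha=0$, so $\ell$ vanishes along a line $X=x_0$. Then $P(S)-cP(x_0)\equiv0$ in $\C[S]$. This is impossible, since $P$ is nonconstant. Symmetrically, if $\beta=0$, the factor is $S-s_0$ and we would get $P(s_0)-cP(X)\equiv0$. This is impossible because $c\neq0$ and $P$ is nonconstant.

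\emph{Main case.} Now $\alpha\beta\neq0$, so the factor is $S-(aX+b)$ with $a\in\C^\times$ and $b\in\C$. Substituting $S=aX+b$ gives the polynomial identity
\[
P(aX+b)=cP(X)\quad\text{in }\C[X].
\]
Comparing leading coefficients gives $a^d=c$.

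\emph{Roots are permuted.} Let $Z$ be the finite set of roots of $P$. If $r\in Z$, then $P(ar+b)=cP(r)=0$. Hence the injective affine map $\sigma(x)=ax+b$ sends $Z$ into itself, so $\sigma$ restricts to a permutation of $Z$. Consequently some power satisfies $\sigma^k|_Z=\mathrm{id}$ with $k\geq1$.

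\emph{The scaling factor is a root of unity.} We have $\sigma^k(x)=a^kx+b(1+a+\cdots+a^{k-1})$. Pick distinct roots $r_1\neq r_2$ in $Z$. Then
\[
r_1-r_2=\sigma^k(r_1)-\sigma^k(r_2)=a^k(r_1-r_2),
\]
so $a^k=1$ and $|a|=1$.

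\emph{Conclusion.} It follows that $|c|=|a|^d=1$. Since $c$ is a positive real number, $c=1$, contradicting the hypothesis. Hence $P(S)-cP(X)$ has no linear factor.

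The only delicate point is the permutation step, namely that a linear component really does force $\sigma$ to map roots to roots. This is immediate from the functional identity, and the hypothesis of two distinct roots is exactly what is needed to extract $a^k=1$.
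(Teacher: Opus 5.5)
Your proof is correct and follows essentially the same root-permutation argument as the paper: you rule out vertical and horizontal lines, derive $P(aX+b)=cP(X)$, and use that the affine map permutes the finitely many roots. The only difference is in the last step. You get $|c|=1$ by comparing leading coefficients ($a^d=c$) together with $a^k=1$, whereas the paper shows the full map satisfies $L^m=\mathrm{id}$ and iterates the identity to obtain $c^m=1$.
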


\begin{proof}
We argue by contradiction.  Suppose that \(P(S)-cP(X)\) has a complex linear
factor.  The corresponding irreducible component of the affine curve
\(P(S)=cP(X)\) is therefore a line.

We first observe that this line can be neither vertical nor horizontal.
Indeed, if it were vertical, say \(S=S_0\), then
\(P(S_0)-cP(X)\) would vanish identically as a polynomial in \(X\).
This is impossible because \(c\neq0\) and \(P\) is nonconstant.
Similarly, a horizontal component \(X=X_0\) would force
\(P(S)-cP(X_0)\) to vanish identically as a polynomial in \(S\),
which is again impossible.

Thus the line has nonzero coefficients of both \(S\) and \(X\), and
after solving for \(S\) it may be written in the form
\(S=\lambda X+\mu\), with \(\lambda\in\mathbb{C}^{\times}\) and
\(\mu\in\mathbb{C}\).  Since this line is contained in the curve
\(P(S)-cP(X)=0\), substituting \(S=\lambda X+\mu\) gives the
polynomial identity
\begin{equation}\label{eq:affine-symmetry-P}
        P(\lambda X+\mu)=cP(X).
\end{equation}

Let \(R(P)\) denote the set of distinct roots of \(P\), and define the
affine map \(L:\mathbb{C}\to\mathbb{C}\) by
\(L(z)=\lambda z+\mu\).  If \(\alpha\in R(P)\), then
\(P(\alpha)=0\), and \eqref{eq:affine-symmetry-P} gives
\(P(L(\alpha))=cP(\alpha)=0\).  Hence \(L(\alpha)\in R(P)\).
Thus \(L\) maps the finite set \(R(P)\) into itself.

Since \(\lambda\neq0\), the affine map \(L\) is injective.  Its
restriction to the finite set \(R(P)\) is therefore injective, and
hence bijective.  Consequently \(L\) acts as a permutation of the
roots of \(P\).  Since \(R(P)\) is finite, some positive power of this
permutation is the identity.  Thus there exists an integer \(m\geq1\)
such that \(L^m(\alpha)=\alpha\) for every \(\alpha\in R(P)\). By hypothesis, \(P\) has at least two distinct roots.  Choose
\(\alpha,\beta\in R(P)\) with \(\alpha\neq\beta\).  The affine map
\(L^m\) fixes both \(\alpha\) and \(\beta\).  An affine transformation
of \(\mathbb{C}\) which fixes two distinct points must be the identity:
if \(L^m(z)=az+b\), then the equations
\(a\alpha+b=\alpha\) and \(a\beta+b=\beta\) imply
\((a-1)(\alpha-\beta)=0\), hence \(a=1\), and then \(b=0\).
Therefore $L^m=\operatorname{id}_{\mathbb{C}}$.

\par On the other hand, iterating \eqref{eq:affine-symmetry-P} gives
\(P(L^j(X))=c^jP(X)\) for every \(j\geq1\).  Taking \(j=m\) and using
\(L^m=\operatorname{id}\), we obtain $P(X)=c^mP(X)$. Since \(P\) is not the zero polynomial, it follows that \(c^m=1\).
\par Finally \(c\) is a positive real number, so the only possibility is
\(c=1\).  This contradicts the hypothesis \(c\neq1\). Hence \(P(S)-cP(X)\) cannot have a linear factor.
\end{proof} 
We shall use the following standard form of the Bombieri--Pila
estimate.

\begin{lemma}[Bombieri--Pila]
\label{lem:bombieri-pila}
Let \(F\in\mathbb{C}[X,Y]\) be absolutely irreducible of degree \(m\geq2\).
Then, for every \(\varepsilon>0\),
\begin{equation}\label{eq:BP-bound}
        \#\{(x,y)\in[1,M]^2\cap\Z^2:F(x,y)=0\}
        \ll_{m,\varepsilon}M^{1/m+\varepsilon}.
\end{equation}
If \(F\) has degree at most \(d\) and has no linear factor, then the
same counting function is \(O_{d,\varepsilon}(M^{1/2+\varepsilon})\).
\end{lemma}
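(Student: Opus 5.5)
The plan is to treat the first assertion as the classical theorem of Bombieri and Pila and to deduce the second from it by factoring. For the first assertion I would quote Bombieri--Pila directly: if \(\Gamma\subset\R^2\) is the real locus of an absolutely irreducible real algebraic curve of degree \(m\geq2\), then the number of integer points of \(\Gamma\) in a box of side \(M\) is \(O_{m,\varepsilon}(M^{1/m+\varepsilon})\). The key feature I would stress is that the implied constant depends only on \(m\) and \(\varepsilon\), not on the coefficients of \(F\). I would then say what to do when \(F\) has genuinely complex coefficients.

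\emph{Case (a):} \(F\) is a scalar multiple of a real polynomial. After rescaling we may assume \(F\in\R[X,Y]\), and the theorem applies verbatim.

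\emph{Case (b):} \(F\) is not proportional to its complex conjugate \(\overline F\). Every integral zero \((x,y)\) of \(F\) satisfies \(\overline F(x,y)=\overline{F(x,y)}=0\). Since \(F\) is irreducible and \(\overline F\) is not an associate of \(F\), the two polynomials have no common factor. Bézout's theorem then bounds the number of common zeros by \(m^2\), which is \(O_m(1)\) and hence certainly \(\ll M^{1/m+\varepsilon}\).

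So in either case \eqref{eq:BP-bound} holds.

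For the second assertion, factor \(F=c\,G_1\cdots G_k\) into absolutely irreducible factors in \(\C[X,Y]\). Because the degrees add up, we have \(k\leq\deg F\leq d\). Because \(F\) has no linear factor, each \(G_i\) has degree \(m_i\geq2\). Every integral zero of \(F\) in \([1,M]^2\) is a zero of some \(G_i\). Applying the first part to each factor, and using \(1/m_i\leq 1/2\), gives a bound of
\[
\sum_{i\le k} O_{m_i,\varepsilon}\bigl(M^{1/m_i+\varepsilon}\bigr)
\ll_{d,\varepsilon} M^{1/2+\varepsilon}.
\]
Here the constant is uniform because \(k\leq d\) and each \(m_i\) ranges over the finite set \(\{2,\dots,d\}\).

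The only delicate point is the uniformity of the Bombieri--Pila constant in the coefficients of \(F\), together with the passage from real to complex coefficients. The first is part of the original theorem, since its constant depends only on the degree. The second is handled by the conjugation-plus-Bézout argument above. I expect no further obstacle, since the lemma is a packaging of a known result for later application to the curves \(P(S)=cP(X)\), where Proposition~\ref{prop:generic-ratio-irreducibility} and Lemma~\ref{lem:no-linear-ratio-component} supply the hypotheses.
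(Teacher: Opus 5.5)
Your proposal is correct and takes the paper's route. You quote the coefficient-uniform Bombieri--Pila theorem for the first assertion, then factor \(F\) into at most \(d\) absolutely irreducible components, each of degree between \(2\) and \(d\), and apply the first assertion to each one. Your conjugation-plus-B\'ezout reduction from complex to real coefficients is a detail the paper leaves implicit (Bombieri--Pila is a statement about real curves), and it fills that gap correctly.
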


\begin{proof}
The first assertion is the theorem of Bombieri--Pila
\cite{BombieriPila}, in its coefficient-uniform form.  For the second,
factor \(F\) into irreducible components.  Since there are no linear
components, every component has degree at least two, and
\eqref{eq:BP-bound} applied to each component gives the required
\(M^{1/2+\varepsilon}\) estimate.  This componentwise argument for
separated-variable curves also appears in
\cite[Lemmas~2.4--2.6]{WangXu}.
\end{proof}
We are now ready to apply these bounds to ratio curves. 
Rather than applying the no-linear-factor estimate uniformly to all ratio curves, we use critical-value criterion of Proposition \eqref{prop:generic-ratio-irreducibility} to prove absolute irreducibility for every ratio outside a finite exceptional set. This gives a full-degree Bombieri–Pila bound for the generic ratios, while the componentwise $M^{1/2+\varepsilon}$ estimate is required only for finitely many exceptional ratios.

For relatively prime integers \(E>K\geq1\), put
\[
        N_{E,K}(M)
        :=
        \#\{(s,x)\in\Z_{\geq1}^2:s<x\leq M,\ EP(s)=KP(x)\}.
\]

\begin{proposition}\label{prop:ratio-curve-count}
Let \(P\in\Z[T]\) be fixed and separable of degree \(d\geq2\).
For every \(\varepsilon>0\), one has
\[
        N_{E,K}(M)
        \ll_{d,\varepsilon}M^{1/d+\varepsilon}
\]
whenever \(K/E\notin\mathscr E_P\).  There are only finitely many
reduced positive ratios \(K/E\in(0,1)\) belonging to
\(\mathscr E_P\), and for each of them
\(N_{E,K}(M)\ll_{P,\varepsilon}M^{1/2+\varepsilon}\).
\end{proposition}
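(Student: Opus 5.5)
The plan is to view \(N_{E,K}(M)\) as a count of integral points on the plane curve
\[
        C_{E,K}:\quad EP(S)-KP(X)=0,
\]
which is the same as \(P(S)-cP(X)=0\) with \(c=K/E\in(0,1)\). We then apply Lemma~\ref{lem:bombieri-pila}, splitting into the generic and the exceptional case exactly as in the discussion before the statement. Note that every pair counted by \(N_{E,K}(M)\) lies in \([1,M]^2\), since \(1\leq s<x\leq M\).

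\emph{Generic ratios.} Suppose \(K/E\notin\mathscr E_P\). Since \(c=K/E\in\C^\times\setminus\mathscr E_P\), Proposition~\ref{prop:generic-ratio-irreducibility} shows that \(P(S)-cP(X)\) is irreducible in \(\C[S,X]\), i.e.\ absolutely irreducible. Its degree is exactly \(d\geq2\), because the top-degree part is \(a_d(S^d-cX^d)\neq0\). Multiplying by \(E\) does not change the zero set. The first part of Lemma~\ref{lem:bombieri-pila}, with \(m=d\), then gives
\[
        N_{E,K}(M)\ll_{d,\varepsilon}M^{1/d+\varepsilon}.
\]
The implied constant depends only on \(d\) and \(\varepsilon\), and not on \(E,K\). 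This uniformity is why the coefficient-uniform form of Bombieri--Pila is quoted, and it is essential, since later \(E,K\) will range over many values.

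\emph{Finiteness of the exceptional set.} The critical points of \(P\) are the roots of \(P'\). There are at most \(d-1\) of them, so \(\#\mathcal V_P\leq d-1\). Hence \(\mathscr E_P\) has at most \((d-1)^2\) elements. Only finitely many of these are positive rationals in \((0,1)\), and each such rational has a unique reduced form \(K/E\). So only finitely many pairs \((E,K)\) are exceptional.

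\emph{Exceptional ratios.} For each exceptional pair, \(c=K/E\) is a positive real number different from \(1\). Also \(P\) has at least two distinct roots: it is separable of degree \(d\geq2\), so it has \(d\geq2\) distinct roots. Lemma~\ref{lem:no-linear-ratio-component} therefore shows that \(P(S)-cP(X)\) has no linear factor in \(\C[S,X]\). The second part of Lemma~\ref{lem:bombieri-pila}, applied to this polynomial of degree \(d\), gives
\[
        N_{E,K}(M)\ll_{d,\varepsilon}M^{1/2+\varepsilon}.
\]
Since there are finitely many such \((E,K)\), all determined by \(P\), the constant can be taken to be \(O_{P,\varepsilon}\).

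The main obstacle is making sure the uniformity claims are legitimate. In the generic case, the Bombieri--Pila constant must be independent of the coefficients \(E,K\), and absolute irreducibility must be used over \(\C\) rather than over \(\Q\). I would check both against the precise form of \cite{BombieriPila} being cited. Everything else is bookkeeping.
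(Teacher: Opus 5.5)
Your proposal is correct and follows the paper's proof. For non-exceptional \(K/E\) you get absolute irreducibility of degree \(d\) from Proposition~\ref{prop:generic-ratio-irreducibility} and then apply the full-degree Bombieri--Pila bound; for the finitely many exceptional ratios you use Lemma~\ref{lem:no-linear-ratio-component} (valid since \(0<K/E<1\) and \(P\) has \(d\geq2\) distinct roots) together with the componentwise \(M^{1/2+\varepsilon}\) estimate, and your extra checks (degree exactly \(d\), \(\#\mathscr E_P\leq(d-1)^2\), uniformity in \(E,K\)) just make explicit what the paper leaves implicit.
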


\begin{proof}
For a nonexceptional ratio,
Proposition~\ref{prop:generic-ratio-irreducibility} shows that
\(EP(S)-KP(X)\) is absolutely irreducible of degree \(d\), and
Lemma~\ref{lem:bombieri-pila} gives the first estimate.

Since \(\mathscr E_P\) is finite, there are only finitely many
exceptional reduced positive rational ratios.  For such a ratio we
have \(K/E\neq1\), so Lemma~\ref{lem:no-linear-ratio-component}
shows that the corresponding curve has no linear component.
The second part of Lemma~\ref{lem:bombieri-pila} then gives the
second estimate.
\end{proof}

\begin{proof}[Proof of Theorem~\ref{thm:intro-separable-polynomial}]
Recall that
\begin{equation}\label{eq:def-gcd-average-recalled}
        \mathcal G_P(X)
        :=
        \sum_{x\leq X}
        \frac{1}{P(x)}
        \sum_{1\leq s<x}
        \gcd\bigl(P(x),P(s)\bigr).
\end{equation}
Lemma~\ref{lem:polynomial-gcd-reduction} gives
\(\#U(P;X)\leq X\mathcal G_P(X)\).  It is therefore enough to prove
\begin{equation}\label{eq:general-gcd-target}
        \mathcal G_P(X)
        \ll_{P,\varepsilon}
        X^{\frac12+\frac1{2d}+\varepsilon}.
\end{equation}

We estimate \(\mathcal G_P(X)\) on dyadic intervals.  For \(M\geq1\),
put
\[
        \mathcal G_P(M,2M)
        :=
        \sum_{M<x\leq2M}
        \frac{1}{P(x)}
        \sum_{1\leq s<x}
        \gcd\bigl(P(x),P(s)\bigr).
\]
Since
\(P(T)=a_dT^d+\cdots+a_1T\), with \(a_d>0\) and all \(a_i\geq0\),
there are positive constants \(c_P,C_P\), depending only on \(P\),
such that
$c_PM^d\leq P(x)\leq C_PM^d
        \quad \text{for}\quad M<x\leq2M$. 
 In particular, \(P(x)\asymp_PM^d\) uniformly on the
dyadic interval.
Using the Euler totient function, we express 
\begin{equation}\label{eq:general-gcd-dyadic}
 \mathcal G_P(M,2M)
 =
 \sum_{M<x\leq2M}
 \frac1{P(x)}
 \sum_{s<x}
 \sum_{\substack{q\mid P(x)\\q\mid P(s)}}
 \varphi(q).
\end{equation}
We divide the common divisors \(q\) into three ranges using a real parameter $Q$ satisfying
$M<Q<M^d,$
whose value will be chosen later: 
\begin{equation}\label{eq:GPM-three-range-decomposition}
        \mathcal G_P(M,2M)
        \ll_P
        \Sigma_{\mathrm{small}}(M)
        +
        \Sigma_{\mathrm{mid}}(M)
        +
        \Sigma_{\mathrm{large}}(M),
\end{equation}
where
\begin{equation}\label{eq:Sigma-small-definition}
\Sigma_{\mathrm{small}}(M)
:=
M^{-d}
\sum_{M<x\leq2M}
\sum_{1\leq s<x}
\sum_{\substack{
        q\mid P(x),\ q\mid P(s)\\
        q\leq M}}
\varphi(q),
\end{equation}
\begin{equation}\label{eq:Sigma-mid-definition}
\Sigma_{\mathrm{mid}}(M)
:=
M^{-d}
\sum_{M<x\leq2M}
\sum_{1\leq s<x}
\sum_{\substack{
        q\mid P(x),\ q\mid P(s)\\
        M<q\leq Q}}
\varphi(q), and
\end{equation}
\begin{equation}\label{eq:Sigma-large-definition}
\Sigma_{\mathrm{large}}(M)
:=
M^{-d}
\sum_{M<x\leq2M}
\sum_{1\leq s<x}
\sum_{\substack{
        q\mid P(x),\ q\mid P(s)\\
        q>Q}}
\varphi(q).
\end{equation}

We treat these ranges using different arguments. We first consider \(q\leq M\). 
The integers $n$ satisfying $q\mid P(n)$ lie in precisely
$\rho_P(q)$ residue classes modulo $q$.  Each residue class
contains at most $2M/q+1$ integers in the interval $1\leq n\leq
2M$.  Since $q\leq M$, this is $O(M/q)$.  Thus
$\#\{n\leq2M:q\mid P(n)\}
        \ll
        \frac{M}{q}\rho_P(q).$
For a fixed \(q\), the number of pairs \((s,x)\) occurring in
\eqref{eq:general-gcd-dyadic} for which \(q\) divides both
\(P(s)\) and \(P(x)\) is therefore at most the square of this
quantity.  Using \(1/P(x)\ll_PM^{-d}\) and
\(\varphi(q)\leq q\), we obtain
\[
 \Sigma_{\mathrm{small}}(M)
 \ll_P
 M^{-d}
 \sum_{q\leq M}
 q
 \left(\frac{M\rho_P(q)}q\right)^2
 =
 M^{2-d}
 \sum_{q\leq M}\frac{\rho_P(q)^2}{q}.
\]
Lemma~\ref{lem:rho-perron} now gives
\begin{equation}\label{eq:general-small}
        \Sigma_{\mathrm{small}}(M)
        \ll_P
        M^{2-d}(\log(M))^{d^2}.
\end{equation}

We next consider \(M<q\leq Q\).  Since \(q>M\), a fixed residue
class modulo \(q\) occurs at most twice among the integers
\(1,\ldots,2M\).  Hence
\[
        \#\{s\leq2M:q\mid P(s)\}
        \ll \rho_P(q).
\]
For each fixed \(x\), we may therefore bound the number of possible
\(s<x\) with \(q\mid P(s)\) by \(O(\rho_P(q))\).  Using again
\(1/P(x)\ll_PM^{-d}\), together with
\(\varphi(q)\leq q\leq Q\) and \eqref{eq:rho-divisor-bound}, gives
\begin{equation}
\label{eq:general-middle}
\Sigma_{\mathrm{mid}}(M)
 \ll_P
 QM^{-d}
 \sum_{M<x\leq2M}
 \sum_{q\mid P(x)}
 \rho_P(q)\ll_{P,\varepsilon}
        QM^{1-d+\varepsilon}
\end{equation}
\par It remains to treat the range \(q>Q\).  Although
\(\Sigma_{\mathrm{large}}(M)\) was defined using the factor
\(M^{-d}\), on the dyadic interval \(M<x\leq 2M\) we have
\(P(x)\leq C_PM^d\).  Hence,
we shall use this form of the large-divisor contribution:
\[
 \Sigma_{\mathrm{large}}(M)
 \ll_P
 \sum_{M<x\leq2M}\frac1{P(x)}
 \sum_{s<x}
 \sum_{\substack{q\mid P(x),\ q\mid P(s)\\q>Q}}
 \varphi(q).
\]
Fix a triple \((s,x,q)\) occurring in the preceding sum, and set
$e:=\frac{P(x)}q>0; \ 
        k:=\frac{P(s)}q>0.$
 Moreover $P$ is strictly increasing on
$\R_{>0}$, so $s<x$ implies $P(s)<P(x)$, and hence $k<e$, and $\frac{\varphi(q)}{P(x)} \leq \frac {q}{P(x)} = 1/e.$
The definitions of $e$ and $k$ give
\[
        eP(s)=kP(x).
\]
Since \(q>Q\) and \(P(x)\leq C_PM^d\), we also have
\(e\leq C_PM^d/Q\).  Put
\[
        L:=C_P\frac{M^d}{Q}.
\]

For positive integers \(a>b\), let
\[
 N_{a,b}(Y)
 :=
 \#\{(s,x)\in\Z_{\geq1}^2:
        s<x\leq Y,\ aP(s)=bP(x)\}.
\]
Thus every triple \((s,x,q)\) occurring in the large-divisor sum
determines integers \(1\leq k<e\leq L\) and a pair counted by
\(N_{e,k}(2M)\), with weight at most \(1/e\).  Moreover, for fixed tuple $(e,x)$, the divisor \(q=P(x)/e\) is uniquely determined, so
this passage introduces no additional multiplicity.  Conversely,
\(N_{e,k}(2M)\) may count pairs for which \(P(x)/e\) is not an
integer or does not give a divisor \(q>Q\); allowing such pairs only
enlarges the sum. Therefore
\begin{equation}\label{eq:large-ratio-sum}
        \Sigma_{\mathrm{large}}(M)
        \ll_P
        \sum_{e\leq L}\frac1e
        \sum_{1\leq k<e}
        N_{e,k}(2M).
\end{equation}
Without loss of generality, we assume here that $\gcd(e,k)=1$ since, if $g=\gcd(e,k), e=gE, k=gK$, where $\gcd(E,K)=1$ and also $EP(s)=KP(x)$.
Hence
\[
        N_{e,k}(2M)=N_{E,K}(2M).
\]
Suppose first that \(k/e\notin\mathscr E_P\).  By
Proposition~\ref{prop:ratio-curve-count},
\[
        N_{E,K}(2M)
        \ll_{P,\varepsilon}
        M^{1/d+\varepsilon},
\]
uniformly for primitive integers $e>k\geq1$.
The contribution to \eqref{eq:large-ratio-sum} from all such
nonexceptional ratios is therefore at most
\[
 M^{1/d+\varepsilon}
 \sum_{e\leq L}\frac1e\sum_{k<e}1
 \ll
 LM^{1/d+\varepsilon}
\]
Substituting \(L\),
we obtain
\begin{equation}\label{eq:large-generic}
        \Sigma_{\mathrm{large}}^{\mathrm{gen}}(M)
        \ll_{P,\varepsilon}
        \frac{M^{d+1/d+\varepsilon}}{Q}.
\end{equation}
We now consider the exceptional ratios.  Since \(\mathscr E_P\) is
finite, there are only finitely many reduced positive rational
numbers in
\(\mathscr E_P\cap(0,1)\).  Write them as
\(k_j/e_j\), for \(1\leq j\leq r\), with
\(\gcd(e_j,k_j)=1\) and \(e_j>k_j\).
Proposition
\ref{prop:ratio-curve-count} gives
\[
        N_{e,k}(2M)
        =
        N_{e_j,k_j}(2M)
        \ll_{P,\varepsilon}
        M^{1/2+\varepsilon}.
\]
The total contribution we have is
therefore
\[
 \ll_{P,\varepsilon}
 M^{1/2+\varepsilon}
 \sum_{g\leq L/E_j}\frac1{gE_j}
 \ll_{P,\varepsilon}
 M^{1/2+\varepsilon}\log(L).
\]
There are only finitely many exceptional ratios, with their number
depending only on \(P\).  We therefore
obtain
\begin{equation}\label{eq:large-exceptional}
        \Sigma_{\mathrm{large}}^{\mathrm{exc}}(M)
        \ll_{P,\varepsilon}
        M^{1/2+\varepsilon}.
\end{equation}
Combining the generic \eqref{eq:large-generic} and exceptional parts \eqref{eq:large-exceptional} gives
\begin{equation}\label{eq:general-large}
        \Sigma_{\mathrm{large}}(M)
        \ll_{P,\varepsilon}
        \frac{M^{d+1/d+\varepsilon}}{Q}
        +
        M^{1/2+\varepsilon}.
\end{equation}
We collect estimates from \eqref{eq:general-small}, \eqref{eq:general-middle} and \eqref{eq:general-large} and choose $Q=M^{d-\frac12+\frac1{2d}}< M^d$.
We conclude that
\begin{equation}\label{eq:dyadic-gcd-final}
        \mathcal G_P(M,2M)
        \ll_{P,\varepsilon}
        M^{\frac12+\frac1{2d}+\varepsilon}.
\end{equation}

Finally, decompose the range \(x\leq X\) into dyadic intervals.  Apart
from finitely many initial values of \(x\), which contribute
\(O_P(1)\), we have
\[
        \mathcal G_P(X)
        \leq
        \sum_{\substack{M=2^j\\M\leq X}}
        \mathcal G_P(M,2M).
\]
Since \(1/2+1/(2d)+\varepsilon>0\), the resulting geometric sum is
dominated by its largest term.  Using
\eqref{eq:dyadic-gcd-final}, we obtain
\[
        \mathcal G_P(X)
        \ll_{P,\varepsilon}
        X^{\frac12+\frac1{2d}+\varepsilon},
\]
which is \eqref{eq:general-gcd-target}.

Lemma~\ref{lem:polynomial-gcd-reduction} now gives
\[
        \#U(P;X)
        \leq
        X\mathcal G_P(X)
        \ll_{P,\varepsilon}
        X^{\frac32+\frac1{2d}+\varepsilon}.
\]
This completes the proof.
\end{proof}

\subsection{Bounds for the defect set}
\label{subsec:quantitative-defect}

In this section, we prove Theorem~\ref{thm:intro-quantitative-defect}.  Throughout, let \(D_P(X)\) denote the number of points of
\(\cB^{\bad}(\cY_P)\) lying in \([1,X]^2\).  For the general lower
bound, we assume \(P\) is of degree $d\geq 2$, with non-negative coefficients and $P(T)\neq T^d$.

\begin{lemma}
\label{lem:scale-primitive-defect}
Suppose that \((u,v)\) is a primitive globally invisible point for the
polynomial family associated to \(P\).  Then
\(D_P(X)\gg_{P,u,v}X\).
\end{lemma}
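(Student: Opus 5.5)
The plan is to produce, from the single primitive globally invisible point \((u,v)\), a family of \(\gg X\) further primitive globally invisible points in \([1,X]^2\). Every primitive point is automatically \(p\)-adically visible for all \(p\), since \(v_p\) of any integral point is nonnegative; this is exactly the argument in the proof of Theorem~\ref{thm:polynomial-local-global}. So each such point lies in \(\cB^{\bad}(\cY_P)\), and it suffices to count them.

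Since \((u,v)\) is globally invisible, there is an integer \(s<u\) with \(\phi_q(s)\) integral, where \(q=v/P(u)\). Equivalently, \(P(u)\mid vP(s)\), as in the proof of Lemma~\ref{lem:polynomial-gcd-reduction}. Write \(g=\gcd(P(u),P(s))\) and \(m=P(u)/g\), so invisibility of \((u,y)\) is implied by \(m\mid y\). I would keep \(x=u\) fixed and vary the second coordinate. Consider the points \((u,y)\) with
\[
y\equiv v\pmod{m}.
\]
For each such \(y\) we have \(m\mid v\), because \(P(u)\mid vP(s)\) gives \(m\mid v\cdot(P(s)/g)\) with \(\gcd(m,P(s)/g)=1\). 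Hence \(m\mid y\) and \(P(u)\mid yP(s)\), so \(\phi_{y/P(u)}(s)\) is an earlier integral point and \((u,y)\) is globally invisible. In fact every multiple \(y\) of \(m\) works.

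Next we impose primitivity, i.e.\ \(\gcd(u,y)=1\). Among \(y\le X\) with \(m\mid y\), the condition \(\gcd(u,y)=1\) is a congruence condition modulo \(\operatorname{lcm}(m,u)\). It is nonempty because \(y=v\) satisfies it: \(v\) is a multiple of \(m\) and \(\gcd(u,v)=1\). So the admissible \(y\) contain the full progression \(y\equiv v \pmod{mu}\), which has \(\ge X/(mu)-1\) elements in \([1,X]\). Each gives a distinct point \((u,y)\in\cB^{\bad}(\cY_P)\cap[1,X]^2\), yielding \(D_P(X)\ge X/(mu)-1\gg_{P,u,v}X\).

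The step needing most care is the primitivity and divisibility bookkeeping. We must check that \(y\equiv v\pmod{mu}\) preserves both \(m\mid y\) and \(\gcd(u,y)=1\); it does, since \(y\equiv v\) modulo both \(m\) and \(u\). We must also confirm that points with \(x=u\) lie in \([1,X]^2\) once \(X\ge u\). Neither step is deep, so the main conceptual input is just the observation that the obstruction depends only on \(m\mid y\), which is stable along an arithmetic progression.
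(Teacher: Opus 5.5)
Your argument is correct and is essentially the paper's: both fix the first coordinate $u$ and vary the second coordinate over a positive-density set in $[1,X]$ that keeps the same earlier parameter $s$ as the witness of global invisibility while preserving $\gcd(u,y)=1$, so the new points are automatically $p$-adically visible for every $p$. The only difference is cosmetic: the paper takes $y=av$ with $\gcd(a,u)=1$, so the earlier point $(s,qP(s))$ simply scales to $(s,aqP(s))$, whereas you first check $m\mid v$ and then take the progression $y\equiv v \pmod{mu}$.
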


\begin{proof}
For a primitive globally invisible lattice point $(u,v)$, we observe that for every integer $a$ with $\gcd(a,u)=1$, the point $(u,av)$ is also a primitive and globally invisible point. Primitivity is clear. For $(u,av)$ to be globally invisible, note that if for some $s<u$, $(s,qP(s))$ is an integral point before $(u,v)$ on the branch with parameter $q$, then the point $(s,aqP(s))$ is the integral point before $(u,av)$ forcing invisibility. Hence, for every fixed $(u,v)$, the point $(u,av)$ with $(u,v)=1$ is also a defect point. This gives
\[D_{P}(X)\ge \#\{a\le \frac{X}{v}: \gcd(a,u)=1\}\gg_{P}\frac{\phi(u)}{uv}X\] proving the assertion. 
\end{proof}

\begin{proposition}
\label{prop:general-linear-defect-lower}
If $P$ is not a monomial and degree $P \ge 2$,
then
\[
        D_P(X)\gg_P X.
\]
\end{proposition}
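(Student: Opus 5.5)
The plan is to reduce to Lemma~\ref{lem:scale-primitive-defect}. It suffices to exhibit, for each non-monomial \(P\) of degree at least \(2\), a single primitive, globally invisible lattice point \((u,v)\) with \(u,v\ge1\). The lemma then gives \(D_P(X)\gg_{P,u,v}X\). Since \((u,v)\) depends only on \(P\), this is \(\gg_P X\).

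For the construction I would repeat the proof of Theorem~\ref{thm:polynomial-local-global}. Write \(P(x)=x^rR(x)\), where \(R(0)=a_r>0\) and \(R\) is nonconstant. The Claim there supplies a prime \(\ell\nmid a_r\) and an integer \(c\ge1\) with \(\gcd(c,a_r)=1\) and \(\ell^r\mid R(c)\). Set \(u=c\ell\) and take the branch \(q=1/u^r\), so that \(\phi_q(t)=(t,t^rR(t)/u^r)\).

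\begin{itemize}
\item The point \(\phi_q(u)=(c\ell,R(c\ell))\) is integral, and \(u>c\).
\item The point \(\phi_q(c)=(c,c^rR(c)/(c\ell)^r)=(c,R(c)/\ell^r)\) is integral because \(\ell^r\mid R(c)\). It lies earlier on the branch, so \((u,v)\) with \(v=R(c\ell)\) is globally invisible.
\item For primitivity, let \(p\mid c\ell\). Then \(R(c\ell)\equiv a_r\pmod p\). Moreover \(p\nmid a_r\): if \(p=\ell\) this is the choice of \(\ell\), and if \(p\mid c\) it follows from \(\gcd(c,a_r)=1\). Hence \(\gcd(u,v)=1\).
\item Positivity: \(v=R(c\ell)\ge a_r>0\) since the coefficients are nonnegative.
\end{itemize}

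This produces the required primitive invisible point, and Lemma~\ref{lem:scale-primitive-defect} finishes the proof.

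The main obstacle is ensuring the Claim's hypotheses really hold in the present generality. That proof uses a primitive irreducible factor \(H\) of \(R\), Schur's theorem, and a prime \(\ell\) avoiding the divisors of \(a_r\), the leading coefficient, and \(\operatorname{disc}(H)\). Hensel lifting a simple root of \(H\) modulo \(\ell\) to a root modulo \(\ell^r\) needs only \(\ell\nmid \operatorname{disc}(H)\) and \(\ell\nmid\) the leading coefficient of \(H\). This works even if \(R\) itself is not separable, since we only need \(\ell^r\mid H(c_0)\), and hence \(\ell^r\mid R(c_0)\).

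I would also double-check Lemma~\ref{lem:scale-primitive-defect}'s scaling step, namely that \((u,av)\) remains invisible. If \((s,qP(s))\) is integral and precedes \((u,v)\) on the branch \(q=v/P(u)\), then \((s,aqP(s))\) is integral and lies on the branch \(aq\) through \((u,av)\). That is the only point requiring care, and it is already established there.
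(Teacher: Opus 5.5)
Your proposal is correct and is essentially the paper's own proof. The paper also takes the primitive, globally invisible point \((c\ell,R(c\ell))\) on the branch \(q=1/(c\ell)^r\) from the proof of Theorem~\ref{thm:polynomial-local-global} and applies Lemma~\ref{lem:scale-primitive-defect} to it; your explicit checks (that \(c<u\), primitivity via \(R(c\ell)\equiv a_r \pmod p\) for \(p\mid c\ell\), and Hensel lifting using only the irreducible factor \(H\)) just supply details the paper leaves implicit.
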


\begin{proof}
By the construction in the proof of Theorem 1.6, there exists a primitive globally invisible point $(u,v)\in\cB(\cY_P)$. Applying Lemma \ref{lem:scale-primitive-defect} to this point gives $D_P(X)\gg_P X$.
\end{proof}

For a single primitive globally invisible point, Lemma \ref{lem:scale-primitive-defect}
produces $\gg X$ defect points.  For $
P(T)=T^r(m+nT),$ we get an improvement over this.
We state this in Proposition \ref{prop:linear-cofactor-log-lower} below. 
\begin{proposition}
\label{prop:linear-cofactor-log-lower}
Let \(P(T)=T^r(m+nT)\), where \(r\geq1\), \(m,n\geq1\), and
\((m,n)=1\).  Then
\[
        D_P(X)\gg_P X\log X.
\]
\end{proposition}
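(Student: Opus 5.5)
The plan is to produce an explicit two-parameter family of defect points. A point \((x,y)\) with \(\gcd(x,y)=1\) has \(v_p=0\) at every prime, so it is \(p\)-adically visible for all \(p\), exactly as in the proof of Theorem~\ref{thm:polynomial-local-global}. It is therefore a defect point as soon as it is globally invisible, that is, as soon as some \(s<x\) makes \(yP(s)/P(x)\) an integer (Lemma~\ref{lem:polynomial-gcd-reduction}). So we need many coprime pairs \((x,y)\in[1,X]^2\) admitting such an \(s\). The factor \(x^r\) in \(P(x)\) must be absorbed by \(P(s)\), since \(y\) is prime to \(x\). The factor \(m+nx\) can be absorbed by \(y\), since \(\gcd(m+nx,x)=\gcd(m,x)\) is \(1\) once \(x\) is prime to \(m\).

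\emph{Construction.} Fix once and for all a prime \(\ell\nmid mn\). By the Chinese remainder theorem, choose a residue class \(s_0\bmod \ell^r M\), where \(M=\prod_{p\mid m}p\), such that
\[
m+ns_0\equiv0\pmod{\ell^r}
\quad\text{and}\quad
s_0\equiv1\pmod p\ \text{for every } p\mid m .
\]
This is possible since \(n\) is invertible mod \(\ell\). For every \(s\equiv s_0\pmod{\ell^rM}\) put \(x=\ell s\), so that \(x>s\) and \(\gcd(x,m)=1\). For every integer \(a\ge1\) with \(\gcd(a,x)=1\), put \(y=a(m+nx)\).

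Then \(\gcd(x,y)=1\), because \(\gcd(x,m+nx)=\gcd(x,m)=1\), using \(\ell\nmid m\) and \(s\) prime to \(m\). Moreover,
\[
\frac{yP(s)}{P(x)}=\frac{a(m+nx)\,s^r(m+ns)}{\ell^rs^r(m+nx)}=\frac{a(m+ns)}{\ell^r}\in\Z,
\]
so \((s,\,yP(s)/P(x))\) is an earlier integral point on the branch \(q=y/P(x)\). Distinct pairs \((s,a)\) give distinct points \((x,y)\), since \(x\) determines \(s\) and then \(y\) determines \(a\). Hence
\[
D_P(X)\ \ge\ \sum_{\substack{s\equiv s_0\ (\ell^rM)\\ \ell s\le X}}
\#\Bigl\{a\le \frac{X}{m+n\ell s}:\ \gcd(a,\ell s)=1\Bigr\}.
\]

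\emph{Counting.} For each \(s\) with \(\ell s\le X/(2(m+n))\), say, the inner count is \(\frac{\varphi(\ell s)}{\ell s}\cdot\frac{X}{m+n\ell s}+O(\tau(\ell s))\). The error terms sum to \(O(X\log X/\ell)\cdot o(1)\)-type quantities. More precisely they are \(\ll \sum_{s\le X}\tau(s)\), but this is only \(X\log X\), so I would instead restrict to \(s\le X^{1/2}\). There the error is \(O(X^{1/2}\log X)\), while the main term is
\[
\gg_P X\sum_{\substack{s\le X^{1/2}\\ s\equiv s_0\ (\ell^rM)}}\frac{\varphi(\ell s)}{\ell s}\cdot\frac1{s}.
\]
It remains to show that this sum is \(\gg\log X\).

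\emph{Main obstacle.} The only nonroutine point is a lower bound for \(\sum_{s\le T,\ s\equiv s_0}\varphi(s)/s^2\) along a fixed progression, since \(\varphi(s)/s\) could a priori be small on the progression. I would handle it in one of two ways. The first is to write \(\varphi(s)/s=\sum_{d\mid s}\mu(d)/d\) and swap sums; this gives \(\sum_{s\le T,\,s\equiv s_0}\varphi(s)/s=c\,T+O(\log T)\) with \(c=c(P,\ell)>0\) an explicit Euler-product constant, which is positive because the progression is compatible with every local condition. Partial summation then yields \(\gg\log T\). The second, more crudely, is to restrict to \(s\) that are prime, or prime to all primes below a fixed bound, in the progression, where \(\varphi(\ell s)/(\ell s)\gg1\) trivially, and use Dirichlet's theorem in the form \(\sum_{p\le T,\,p\equiv s_0}1/p\gg\log\log T\). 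That is too weak, so I would settle on the first approach. With \(T=X^{1/2}\) this gives \(D_P(X)\gg_P X\log X\).
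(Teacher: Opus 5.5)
Your proposal is correct and follows essentially the same route as the paper. The paper uses the same defect points $(\ell c,\alpha(m+n\ell c))$, with $c$ in a fixed reduced class modulo $\ell^r\prod_{p\mid m}p$ and $\gcd(\alpha,\ell c)=1$. It also truncates at $c\le X^{1/2}$ to control the $\tau$-errors, and expands $\varphi(c)/c=\sum_{d\mid c}\mu(d)/d$ along the progression to obtain $\sum\varphi(c)/c^2\gg\log X$. That last step is its Lemma~\ref{lem:defect-totient-progression}, which yields the asymptotic $\kappa_H\log Y+O_H(1)$ directly, whereas you obtain $\sum_{s\le T,\,s\equiv s_0}\varphi(s)/s=cT+O(\log T)$ and then apply partial summation. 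That is only a minor variant of the same argument.
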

In proving this, we use the following asymptotic identity.
\begin{lemma}
\label{lem:defect-totient-progression}
Let \(H\geq1\) be fixed and let \(a\) satisfy \((a,H)=1\).  Then
\[
 \sum_{\substack{c\leq Y\\c\equiv a\pmod H}}
 \frac{\varphi(c)}{c^2}
 =
 \kappa_H\log Y+O_H(1),
\]
where
\(\kappa_H=H^{-1}\zeta(2)^{-1}
\prod_{p\mid H}(1-p^{-2})^{-1}>0\).
\end{lemma}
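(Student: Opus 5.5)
We prove Lemma~\ref{lem:defect-totient-progression} by reducing it to an asymptotic for the summatory function of \(\varphi\) restricted to a progression, followed by partial summation. Throughout, \(H\) is fixed and all implied constants may depend on \(H\).

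\textbf{Step 1: the counting function.} Put
\[
A(t):=\sum_{\substack{c\leq t\\ c\equiv a\ (\mathrm{mod}\ H)}}\varphi(c).
\]
We aim for \(A(t)=\tfrac{\kappa_H}{2}t^2+O_H(t\log(2t))\). Write \(\varphi(c)=\sum_{q\mid c}\mu(q)\,c/q\), set \(c=qm\), and exchange the order of summation:
\[
A(t)=\sum_{q\leq t}\mu(q)\sum_{\substack{m\leq t/q\\ qm\equiv a\ (\mathrm{mod}\ H)}} m .
\]
Since \((a,H)=1\), the congruence \(qm\equiv a\) forces \((q,H)=1\), and in that case it fixes \(m\) in a single residue class modulo \(H\). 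The inner sum is therefore
\[
\frac{(t/q)^2}{2H}+O(t/q).
\]
Summing over \(q\leq t\) coprime to \(H\) gives
\[
A(t)=\frac{t^2}{2H}\sum_{\substack{q\geq1\\(q,H)=1}}\frac{\mu(q)}{q^2}+O\Bigl(t^2\sum_{q>t}q^{-2}\Bigr)+O\Bigl(t\sum_{q\leq t}q^{-1}\Bigr).
\]
Both error terms are \(O(t\log(2t))\). The Euler product of the leading sum is
\[
\sum_{(q,H)=1}\frac{\mu(q)}{q^2}=\prod_{p\nmid H}(1-p^{-2})=\zeta(2)^{-1}\prod_{p\mid H}(1-p^{-2})^{-1}.
\]
Hence the leading term is exactly \(\tfrac{\kappa_H}{2}t^2\).

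\textbf{Step 2: partial summation.} By Abel summation,
\[
\sum_{\substack{c\leq Y\\ c\equiv a\ (\mathrm{mod}\ H)}}\frac{\varphi(c)}{c^2}=\frac{A(Y)}{Y^2}+2\int_1^Y\frac{A(t)}{t^3}\,dt .
\]
The boundary term is \(\kappa_H/2+O(Y^{-1}\log Y)=O(1)\). Inserting the asymptotic from Step~1 into the integral, the main term contributes
\[
2\int_1^Y\frac{\kappa_H}{2t}\,dt=\kappa_H\log Y,
\]
and the error term contributes \(O\bigl(\int_1^\infty t^{-2}\log(2t)\,dt\bigr)=O(1)\). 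This gives the stated formula. Positivity of \(\kappa_H\) is immediate, since every factor in its definition is positive.

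\textbf{Main obstacle.} The only delicate point is the bookkeeping in Step~1. One must check that the condition \((a,H)=1\) really forces \((q,H)=1\), so that the congruence on \(m\) is solvable in exactly one residue class modulo \(H\). One must also verify that the Möbius tail and the \(O(t/q)\) errors together stay at size \(O(t\log t)\), which is small enough to survive the partial summation. Neither step should present real difficulty.
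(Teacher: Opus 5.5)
Your proof is correct. It reaches the result by a slightly different route from the paper's.

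The paper never forms the unweighted summatory function. It inserts $\varphi(c)/c=\sum_{d\mid c}\mu(d)/d$ directly into the weighted sum and writes $c=dk$, which gives
\[
\sum_{\substack{c\leq Y\\ c\equiv a\ (\mathrm{mod}\ H)}}\frac{\varphi(c)}{c^2}
=\sum_{\substack{d\leq Y\\ (d,H)=1}}\frac{\mu(d)}{d^2}\sum_{\substack{k\leq Y/d\\ k\equiv a\overline{d}\ (\mathrm{mod}\ H)}}\frac1k .
\]
It then applies the harmonic sum in a progression, $\sum_{k\leq T,\ k\equiv b\ (\mathrm{mod}\ H)}1/k=H^{-1}\log T+O_H(1)$, so no partial summation is needed.

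You instead first prove $A(t)=\tfrac{\kappa_H}{2}t^2+O_H(t\log 2t)$ and then apply Abel summation against $t^{-2}$. The arithmetic input is the same in both arguments:
\begin{itemize}
\item the M\"obius expansion of $\varphi$;
\item the observation that $(a,H)=1$ forces $(q,H)=1$, so exactly one residue class modulo $H$ survives;
\item the Euler product $\sum_{(q,H)=1}\mu(q)q^{-2}=\zeta(2)^{-1}\prod_{p\mid H}(1-p^{-2})^{-1}$.
\end{itemize}

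The paper's version is one step shorter. As written, however, it silently absorbs two bounded quantities into $O_H(1)$: the term $-H^{-1}\sum_{(d,H)=1}\mu(d)\log d/d^{2}$ and the M\"obius tail beyond $Y$ multiplied by $\log Y$. Your route makes every error term explicit. It also gives the asymptotic for $\sum\varphi(c)$ over the progression as a by-product, at the cost of the extra partial-summation step.
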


\begin{proof}
We use the identity
\(\varphi(c)/c=\sum_{d\mid c}\mu(d)/d\).  Since
\(c\equiv a\pmod H\) and \((a,H)=1\), every \(c\) occurring in the
sum is coprime to \(H\). Consequently, for a divisor $d|c, (d,H)=1$. Let $\overline{d}$ denote the inverse of $d$ modulo $H$. Then,
\begin{align*}
    \sum_{\substack{c\leq Y\\c\equiv a\pmod H}}
 \frac{\varphi(c)}{c^2}&=\sum_{\substack{d\leq Y\\(d,H)=1}}
 \frac{\mu(d)}{d^2}
 \sum_{\substack{k\leq Y/d\\
                 k\equiv a\overline d\pmod H}}
 \frac1k=\frac{\log Y}{H}\sum_{\substack{d\leq Y\\(d,H)=1}}
 \frac{\mu(d)}{d^2}+O_H(1)\\&=\frac{1}{H\zeta(2)}\prod_{p\mid H}(1-p^{-2})^{-1}\log Y+O_H(1).
\end{align*}
\end{proof}
\begin{proof}[Proof of Proposition \ref{prop:linear-cofactor-log-lower}]
We fix a prime \(\ell\nmid mn\). As in the proof of
Theorem \ref{thm:polynomial-local-global}, we write
$
H:=\ell^r\prod_{p\mid m}p.
$.
Since $(n,l^r)=1$, the Chinese remainder
theorem gives a reduced residue class $c_0\pmod H$, such that every
positive integer \(c\equiv c_0\pmod H\) satisfies
\begin{equation}\label{eq:admissible-c-log}
\ell^r\mid m+nc,
\qquad
(c,m\ell)=1.
\end{equation}
For every such \(c\), set
$
u_c:=\ell c,  
v_c:=m+n\ell c.$
The construction used in the proof of Theorem \ref{thm:polynomial-local-global}, shows that the point \((u_c,v_c)\) is a primitive globally invisible
point for the family associated to
$
P(T)=T^r(m+nT).
$
Indeed, the earlier integral point occurs at the parameter \(c\), and
\eqref{eq:admissible-c-log} is precisely the required integrality and
coprimality condition.
By the amplification argument of Lemma \ref{lem:scale-primitive-defect}, for every
\(\alpha\geq1\) with
$
(\alpha,u_c)=1,
$
the point
$
\bigl(u_c,\alpha v_c\bigr)
=
\bigl(\ell c,\alpha(m+n\ell c)\bigr)\in B^{\mathrm{bad}}(Y_P)
$. Observe that distinct pairs \((c,\alpha)\)
give distinct lattice points.
Set \(Y:=X^{1/2}\). For sufficiently large \(X\), all the first
coordinates \(\ell c\), with \(c\leq Y\), are at most \(X\).
Consequently,
\[
D_P(X)\geq \mathcal N(X),
\]
where
\begin{align*}
    \mathcal N(X)
&:=
\sum_{\substack{c\leq Y\\c\equiv c_0\;(\mathrm{mod}\;H)}}
\#\left\{
\alpha\leq \frac{X}{m+n\ell c}:
(\alpha,\ell c)=1
\right\}=X
\sum_{\substack{c\leq Y\\c\equiv c_0\;(\mathrm{mod}\;H)}}
\frac{\varphi(\ell c)}
     {\ell c\,(m+n\ell c)}
+
O\left(\sum_{c\leq Y}\tau(\ell c)\right)
\\&=
\frac{1-\ell^{-1}}{n\ell}\,
X
\sum_{\substack{c\leq Y\\c\equiv c_0\;(\mathrm{mod}\;H)}}
\frac{\varphi(c)}{c^2}
+
O_P\!\left(X+Y\log(2Y)\right)\\&=
\frac{1-\ell^{-1}}{2n\ell}\,
\kappa_H X\log X
+
O_P(X),
\end{align*}
where we use Lemma \ref{lem:defect-totient-progression} and since $(c,\ell)=1,
\varphi(\ell c)
=(l-1)\varphi(c),
$
and
$
(m+n\ell c)^{-1}
=(nlc)^{-1}+O_P(c^{-2})
$.  
The leading coefficient is positive. Therefore, for all sufficiently
large \(X\),
\[
D_P(X)\geq \mathcal N(X)\gg_P X\log X.
\]
\end{proof}
We now turn our attention to upper bounds for the defect set.  
The argument proceeds in three steps. Lemma \ref{lem:defect-small-divisor-count} below provides a uniform
congruence estimate for the small-divisor range. We then deduce Proposition \ref{prop:primitive-defect-upper}
in which we use primitivity to force the repeated polynomial factor into the
common divisor, and then treat the remaining small and large
divisors by congruence counting and fixed-ratio curve estimates,
respectively. Finally, Proposition \ref{prop:defect-upper-linear-cofactor} reduces an arbitrary defect
point to a primitive globally invisible point and sums the resulting uniform
bounds over the common divisors.

For positive coprime integers \(M,N\),
let \(Q_{M,N}(T):=T^r(M+NT)\).  Let \(V_{M,N}(Y)\) denote the
number of primitive globally invisible points in \([1,Y]^2\) for the
polynomial family associated to \(Q_{M,N}\).

\begin{lemma}
\label{lem:defect-small-divisor-count}
Fix \(r\geq2\) and \(m\geq1\).  Let \(M\mid m\) and
\((M,N)=1\).  Uniformly in the positive integers \(x\) and \(t\),
\[
 \#\{\,1\leq s<x:x^rt\mid Q_{M,N}(s)\,\}
 \ll_m\tau(x).
\]
\end{lemma}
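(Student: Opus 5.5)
The plan is to discard the factor \(t\) and parametrize by the common divisor of \(s\) and \(x\). Since \(x^rt\mid Q_{M,N}(s)\) implies \(x^r\mid Q_{M,N}(s)\), it suffices to bound
\[
\#\{1\leq s<x:\ x^r\mid s^r(M+Ns)\}.
\]
Because this bound does not involve \(t\), uniformity in \(t\) is automatic.

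For each such \(s\), put \(d=\gcd(x,s)\) and write \(x=dx'\), \(s=ds'\) with \(\gcd(x',s')=1\) and \(1\leq s'<x'\). The divisibility becomes
\[
d^rx'^r\mid d^rs'^r(M+Nds').
\]
Cancelling \(d^r\) and using \(\gcd(x'^r,s'^r)=1\), this gives \(x'^r\mid M+Nds'\). Since \(d\) ranges over divisors of \(x\), there are at most \(\tau(x)\) choices for \(d\). It therefore suffices to show the following: for fixed \(d\mid x\), the number of \(s'\in[1,x')\) with \(Nds'\equiv -M\pmod{x'^r}\) is \(O_m(1)\). The case \(x'=1\) contributes nothing, since then there is no \(s'\) with \(1\leq s'<1\).

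The key step is the linear congruence
\[
Nd\,s'\equiv -M \pmod{x'^r}.
\]
Let \(h=\gcd(Nd,x'^r)\). If this congruence has a solution, then \(h\mid M\), and since \(M\mid m\) we get \(h\leq m\). The solutions then form a single residue class modulo \(x'^r/h\).

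Consider first the case \(x'^{r-1}\geq h\). Then \(x'^r/h\geq x'\), so the interval \([1,x')\) contains at most one element of that residue class. In the remaining case \(x'^{r-1}<h\leq m\), we have \(x'<m\) because \(r\geq2\). Then the trivial bound gives at most \(x'<m\) values of \(s'\).

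In either case the count is at most \(m+1\). Summing over the at most \(\tau(x)\) choices of \(d\) gives the bound \(\ll_m\tau(x)\).

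I expect the only delicate point to be the solvability condition \(h\mid M\), which is what bounds \(h\) in terms of \(m\). Everything else is bookkeeping, and the hypothesis \(r\geq2\) enters exactly in the comparison \(x'^r/h\geq x'\).
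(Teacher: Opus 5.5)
Your proposal is correct and follows essentially the same route as the paper: split by \(d=\gcd(x,s)\) (at most \(\tau(x)\) choices), reduce to the linear congruence \(Nd\,s'\equiv -M \pmod{x'^r}\), and use \(\gcd(Nd,x'^r)\mid M\mid m\) together with \(r\geq 2\) to get \(O_m(1)\) solutions with \(1\leq s'<x'\). The differences are cosmetic: you drop \(t\) at the outset rather than after cancelling \(d^r\), and you split into two cases where the paper uses the single bound \(1+h/x'^{r-1}\ll_m 1\).
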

\begin{proof} For a given $s$, 
let 
$a=(x,s), x=au, s=av,$
where \((u,v)=1\) and \(1\leq v<u\). Since $
x^rt\mid Q_{M,N}(s)=s^r(M+Ns),$
cancelling \(a^r\) gives
$
u^rt\mid v^r(M+Nav).$
In particular,
$
u^r\mid v^r(M+Nav).
$
As \((u,v)=1\), it follows that
$
u^r\mid M+Nav.$ 
For a fixed integer \(a\),  \(u=x/a\) is fixed. Therefore, we count integers
\(v\) with \(1\leq v<u\) satisfying
\[
Na\,v\equiv-M\pmod{u^r}.
\]
Let $
g=(Na,u^r).$
The preceding congruence is solvable only if \(g\mid M\); in that
case its solutions form a single residue class modulo \(u^r/g\).
Since \(M\mid m\), we have \(g\leq m\), and hence the number of
solutions with \(1\leq v<u\) is
\[
\ll 1+\frac{u}{u^r/g}
=
1+\frac{g}{u^{r-1}}
\ll_m 1,
\]
because \(r\geq2\).
Summing over the \(\tau(x)\) possible divisors \(a\mid x\) gives
\[
\#\left\{
1\leq s<x:x^rt\mid Q_{M,N}(s)
\right\}
\ll_m \tau(x).
\]
\end{proof}
We now count the primitive invisible points. 
\begin{proposition}
\label{prop:primitive-defect-upper}
Fix \(r\geq2\) and \(m\geq1\).  Uniformly for positive coprime
integers \(M,N\) with \(M\mid m\), one has
\[
        V_{M,N}(Y)
        \ll_{r,m,\varepsilon}
        N^\varepsilon Y^{7/4+\varepsilon}.
\]
If \(r=2\), the exponent \(7/4\) may be replaced by \(5/3\).
\end{proposition}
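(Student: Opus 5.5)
Throughout, write \(Q=Q_{M,N}\). The plan is to bound \(V_{M,N}(Y)\) by the same gcd-and-ratio-curve strategy as in the proof of Theorem~\ref{thm:intro-separable-polynomial}. The new ingredient is primitivity, which should force most of \(Q(x)\) into the common divisor.

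\textbf{Reduction.} Let \((x,y)\in[1,Y]^2\) be primitive and globally invisible. As in Lemma~\ref{lem:polynomial-gcd-reduction}, there is some \(s<x\) with \(Q(x)\mid yQ(s)\). Put \(g=\gcd(Q(x),Q(s))\), so that \(Q(x)/g\) divides \(y\).

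Take a prime \(p\mid x\). If \(p\) divided \(Q(x)/g\), then \(p\mid y\), contradicting \(\gcd(x,y)=1\). Hence the full \(p\)-part of \(Q(x)\) lies in \(g\), for every prime \(p\mid x\). Since \(\gcd(x,M+Nx)=\gcd(x,M)\mid m\), this should give \(g=x^rt\) with \(t\mid M+Nx\), after absorbing an \(O_m(1)\) ambiguity.

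For fixed \(x\) and \(t\), the admissible \(y\le Y\) number at most
\[
\frac{Yg}{Q(x)}=\frac{Yt}{M+Nx}.
\]
By Lemma~\ref{lem:defect-small-divisor-count}, the number of \(s<x\) with \(x^rt\mid Q(s)\) is \(\ll_m\tau(x)\), uniformly in \(t\). Therefore
\[
V_{M,N}(Y)\ll_m \sum_{x\le Y}\ \sum_{t\mid M+Nx}\tau(x)\min\Bigl(Y,\frac{Yt}{M+Nx}\Bigr).
\]
We then split the divisors \(t\) at a parameter \(T\).

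\textbf{Small \(t\le T\).} The summand is at most \(YT\tau(x)/(Nx)\). Summing \(\tau(x)\tau(M+Nx)\ll (NY)^{\varepsilon}\) over \(x\le Y\) gives a contribution
\[
\ll N^{\varepsilon}\,Y^{1+\varepsilon}\,T.
\]

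\textbf{Large \(t>T\).} Here we argue as for \(\Sigma_{\mathrm{large}}\). Set \(e=Q(x)/g\) and \(k=Q(s)/g\). Then \(k<e\le (M+Nx)/T\ll NY/T\), and
\[
eQ(s)=kQ(x),
\]
with the count of \(y\) weighted by \(Y/e\). We reduce \(k/e\) to lowest terms as before and count pairs \((s,x)\in[1,Y]^2\) on the curve \(ES^r(M+NS)=KX^r(M+NX)\). The polynomial \(Q\) has the two distinct roots \(0\) and \(-M/N\), and \(K/E\neq1\). So Lemma~\ref{lem:no-linear-ratio-component} rules out linear components, and the second part of Lemma~\ref{lem:bombieri-pila} gives \(\ll Y^{1/2+\varepsilon}\) points. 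This bound is uniform in \(E,K,M,N\), since the Bombieri--Pila bound depends only on the degree \(r+1\).

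Summing, the large range contributes
\[
\ll Y\cdot\sum_{e\ll NY/T}\frac1e\sum_{k<e}Y^{1/2+\varepsilon}\ll \frac{N\,Y^{5/2+\varepsilon}}{T}.
\]
Balancing \(YT\) against \(NY^{5/2}/T\) suggests \(T=N^{1/2}Y^{3/4}\), which yields \(Y^{7/4+\varepsilon}\), but with a factor \(N^{1/2}\) rather than the claimed \(N^{\varepsilon}\). Getting \(N^{\varepsilon}\) will require some care, and this is where I expect the main difficulty. The first thing to try is to exploit the fact that the \(1/(Nx)\) factor in the small range already saves \(N\). Taking \(T=NY^{3/4}\) makes the small range \(\ll Y^{7/4+\varepsilon}N^{\varepsilon}\), and the large range then becomes \(Y^{7/4+\varepsilon}\) as well.

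\textbf{The case \(r=2\).} The curve \(ES^2(M+NS)=KX^2(M+NX)\) is a plane cubic with no linear component. A cubic with no linear factor is automatically absolutely irreducible, so the first part of Lemma~\ref{lem:bombieri-pila} gives \(\ll Y^{1/3+\varepsilon}\) points, uniformly in the coefficients. The large range then becomes \(\ll NY^{7/3+\varepsilon}/T\). Balancing against the small range \(YT/N\) gives \(T=NY^{2/3}\), and hence the exponent \(5/3\).

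The main obstacles are, first, making the primitivity reduction to \(g=x^rt\) rigorous, including the bounded factor coming from \(\gcd(x,M)\); and second, keeping every implied constant uniform in \(N\), as discussed above.
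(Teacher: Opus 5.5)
Your proposal is correct and is essentially the paper's argument. Primitivity forces $x^r\mid\gcd(Q(x),Q(s))$. Lemma~\ref{lem:defect-small-divisor-count} handles $t\le T$. For $t>T$, the ratio curves $eQ(S)=kQ(X)$ are treated with Lemma~\ref{lem:no-linear-ratio-component} and Bombieri--Pila, taking $T=NY^{3/4}$ (resp.\ $T=NY^{2/3}$ when $r=2$). The paper only differs cosmetically: it runs this on dyadic blocks $Z<x\le 2Z$ with $T=NZ^{3/4}$.

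The two obstacles you flag are not real:
\begin{enumerate}
\item Since $Q(x)/g$ divides $y$, it is coprime to $x$. So $x^r\mid g$ exactly, and $t=g/x^r$ divides $M+Nx$ with no $O_m(1)$ ambiguity.
\item Your first small-range bound dropped the factor $1/N$; the correct bound is $\ll N^{\varepsilon-1}Y^{1+\varepsilon}T$. That factor is precisely what makes $T=NY^{3/4}$ give the $N^{\varepsilon}$ dependence.
\end{enumerate}
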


\begin{proof}
Let \((x,y)\) be a primitive globally invisible point with
\(Z<x\leq2Z\). We choose an earlier integral parameter \(s<x\). Let 
$
\Delta=(Q_{M,N}(x),Q_{M,N}(s)), \ 
Q_{M,N}(x)=\Delta e.
$
By the argument of Lemma \ref{lem:polynomial-gcd-reduction}, we have \(e\mid y\). Since
\((x,y)=1\), it follows that \((x,e)=1\). As
\[
Q_{M,N}(x)=x^r(M+Nx)=\Delta e,
\]
we deduce that \(x^r\mid\Delta\). We write
$
\Delta=x^rt.
$
Then
\[
t\mid M+Nx,\qquad
e=\frac{M+Nx}{t},
\qquad
x^rt\mid Q_{M,N}(s).
\]
Consequently,
\[
V_{M,N}(Y;Z)
\leq
Y\sum_{Z<x\leq2Z}\frac1{M+Nx}
\sum_{\substack{t\mid M+Nx}}
t\,
\#\{s<x:x^rt\mid Q_{M,N}(s)\}.
\]
Let \(T\) be a parameter satisfying \(1<T\ll NZ\), to be chosen
later. We split the divisors \(t\) according as \(t\leq T\) or
\(t>T\).

For \(t\leq T\), Lemma~\ref{lem:defect-small-divisor-count} bounds
the number of \(s\)'s by \(O_m(\tau(x))\).  Also,
the sum of the divisors \(t\leq T\) of \(M+Nx\) is at most
\(T\tau(M+Nx)\).  Since \(x\asymp Z\), we have
\(M+Nx\geq NZ\), while \(M+Nx\ll_m NZ\).  Using the standard estimate
\(\tau(q)\ll_\varepsilon q^\varepsilon\), we obtain
\[
        V_{\leq T}(Y;Z)
        \ll_{r,m,\varepsilon}
        Y\frac{T}{N}(NZ)^\varepsilon.
\]
To see the factor \(1/N\) explicitly, there are \(O(Z)\) possible
values of \(x\), while the denominator \(M+Nx\) is
\(\gg NZ\); the factors of \(Z\) consequently cancel.

Consider now \(t>T\).  Set \(e=(M+Nx)/t\) as above, and put
\(k=Q_{M,N}(s)/(x^rt)\), which is an integer because
\(x^rt\mid Q_{M,N}(s)\).  We then have
\(eQ_{M,N}(s)=kQ_{M,N}(x)\).  Since \(Q_{M,N}\) is strictly
increasing on \(\R_{>0}\), the inequality \(s<x\) implies \(k<e\).
Furthermore \(t>T\) and \(M+Nx\ll_mNZ\) imply
\(e\ll_mNZ/T\).

For fixed integers \(e>k\geq1\), the relevant pairs \((s,x)\)
lie on the curve
\[
eQ_{M,N}(S)-kQ_{M,N}(T)=0.
\]
The polynomial \(Q_{M,N}(T)=T^r(M+NT)\) has the two distinct
roots \(0\) and \(-M/N\), and \(0<k/e<1\). Hence, applying
Lemma~\ref{lem:no-linear-ratio-component} with
\[
P=Q_{M,N},
\qquad
c=\frac{k}{e},
\]
we find that this curve has no linear component over \(\mathbb C\).
The second assertion of Lemma~\ref{lem:bombieri-pila} therefore
gives, uniformly in \(e,k,M,N\),
\[
\#\left\{
(s,x)\in[1,2Z]^2\cap\mathbb Z^2:
eQ_{M,N}(s)=kQ_{M,N}(x)
\right\}
\ll_{r,\varepsilon} Z^{1/2+\varepsilon}.
\]
If \(r=2\), the curve has degree \(3\). Since a reducible cubic
over \(\mathbb C\) has a linear factor, the curve is absolutely
irreducible, and the first assertion of
Lemma~\ref{lem:bombieri-pila} improves the preceding estimate to
\[
\ll_{\varepsilon} Z^{1/3+\varepsilon}.
\]
The weight attached to such a pair is $
\frac{t}{M+Nx}=\frac1e.
$
Consequently,
\[
 \begin{split}
 V_{>T}(Y;Z)
 &\ll_{r,m,\varepsilon}
 YZ^{1/2+\varepsilon}
 \sum_{e\ll_mNZ/T}\frac1e
 \sum_{1\leq k<e}1  \\
 &\ll_{r,m,\varepsilon}
 Y\frac{NZ^{3/2+\varepsilon}}{T}.
 \end{split}
\]
Indeed, for each \(e\) the inner sum has \(e-1\) terms, cancelling
the factor \(1/e\), and there are \(O_m(NZ/T)\) possible values of
\(e\).

The small- and large-divisor bounds are balanced by taking
\(T=NZ^{3/4}\).  This choice is within the natural range
\(T\ll NZ\) for \(Z\geq2\), and gives
\(V_{M,N}(Y;Z)\ll_{r,m,\varepsilon}
YN^\varepsilon Z^{3/4+\varepsilon}\).  Summing over the
\(O(\log Y)\) dyadic intervals \(Z\leq Y\) does not introduce a new
power of \(Y\), and the factor \(\log Y\) is absorbed into
\(Y^\varepsilon\).  We obtain the asserted
\(N^\varepsilon Y^{7/4+\varepsilon}\) bound.

When \(r=2\), there are
\(O_\varepsilon(Z^{1/3+\varepsilon})\) points on each ratio curve.
Repeating the preceding calculation changes the large-divisor
contribution to
\(O_{m,\varepsilon}(YNZ^{4/3+\varepsilon}/T)\).
Balancing this with the small-divisor contribution requires
\(T=NZ^{2/3}\).  The resulting dyadic estimate is
\(O_{m,\varepsilon}(YN^\varepsilon Z^{2/3+\varepsilon})\), and
dyadic summation gives
\(V_{M,N}(Y)\ll_{m,\varepsilon}
N^\varepsilon Y^{5/3+\varepsilon}\).
\end{proof}

We finally pass from primitive globally invisible points to arbitrary points
of the defect set.

\begin{proposition}
\label{prop:defect-upper-linear-cofactor}
Let \(P(T)=T^r(m+nT)\), where \(r\geq2\), \(m,n\geq1\), and
\((m,n)=1\).  Then
\[
        D_P(X)\ll_{P,\varepsilon}X^{7/4+\varepsilon}.
\]
If \(r=2\), then
\(D_P(X)\ll_{P,\varepsilon}X^{5/3+\varepsilon}\).
\end{proposition}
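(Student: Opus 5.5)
The idea is to reduce an arbitrary defect point to a primitive globally invisible point for a twisted polynomial $Q_{M,N}$, and then sum the uniform bound of Proposition~\ref{prop:primitive-defect-upper} over the common divisor. Let $(x,y)\in\cB^{\bad}(\cY_P)\cap[1,X]^2$ and put $g:=\gcd(x,y)$, $x=gx'$, $y=gy'$. Since $P(gT)=g^rT^r(m+ngT)$, the branch through $(x,y)$ is $q=y/P(x)$. An integral point $(s,qP(s))$ with $s<x$ on this branch therefore corresponds to the divisibility $P(x)\mid yP(s)$.

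First I will show that being a defect point forces the earlier integral points to lie at parameters divisible by $g$. Fix a prime $p\mid g$. Because $(x,y)$ is $p$-adically visible, every earlier integral point $(s,qP(s))$ has $\min(v_p(s),v_p(qP(s)))\ge v_p(g)$. Applying this for all $p\mid g$ gives $g\mid s$ and $g\mid qP(s)$. Write $s=gs'$. Then
\[
(s',\,qP(s)/g)=(s',\;y'(gs')^r(m+ngs')/(g\,x^r(m+ngx')))=(s',\;y'Q(s')/Q(x')),
\]
where $Q(T):=T^r(m+ngT)$. Thus $(x',y')$ is a primitive globally invisible point for the family attached to $Q$, with earlier parameter $s'<x'$. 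There is one catch: $m$ and $ng$ need not be coprime. So I would write $h=\gcd(m,g)$ and factor it out. Set $M=m/h$ and $N=ng/h$. Then $Q=hQ_{M,N}$, and the branches of $Q$ and of $Q_{M,N}$ coincide after rescaling $q$, so integrality and the ordering are unchanged. Moreover $M\mid m$ and $(M,N)=1$, since $(m,n)=1$ and $h$ has removed the common part of $m$ and $g$.

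The count then goes as follows. The map $(x,y)\mapsto(g,(x',y'))$ is injective, and $(x',y')$ lies in $[1,X/g]^2$. Hence
\[
D_P(X)\le\sum_{g\le X}V_{M_g,N_g}(X/g)\ll_{r,m,\varepsilon}\sum_{g\le X}(ng)^{\varepsilon}(X/g)^{7/4+\varepsilon}\ll X^{7/4+2\varepsilon},
\]
because $\sum_g g^{-7/4}$ converges and $N_g\le ng\le nX$. The uniformity of Proposition~\ref{prop:primitive-defect-upper} in $N$, with only an $N^\varepsilon$ loss, is exactly what makes this summation work. For $r=2$ the same argument uses the exponent $5/3>1$, so the sum over $g$ still converges and gives $X^{5/3+\varepsilon}$.

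The main obstacle is the reduction step. I need to check carefully that $p$-adic visibility at every $p\mid g$ forces all earlier integral parameters, or at least one of them, to be divisible by $g$. I also need the reduced point to be genuinely earlier-invisible on the reduced family, which means handling primes dividing both $g$ and $m$ when passing from $Q$ to $Q_{M,N}$. If some earlier $s$ fails to be divisible by $g$, the $p$-adic visibility condition at the offending prime gives an immediate contradiction, and this is the point I would write out in detail.
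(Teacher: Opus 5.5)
Your proposal is correct and is essentially the paper's own proof: split by $g=\gcd(x,y)$, use $p$-adic visibility at each $p\mid g$ to force $g$ to divide both coordinates of every earlier integral point, descend to a primitive globally invisible point for $T^r(m+ngT)$, remove $\gcd(m,g)$ to land in $V_{M,N}(X/g)$ with $M\mid m$ and $(M,N)=1$, and sum the $N^{\varepsilon}$-uniform bound of Proposition~\ref{prop:primitive-defect-upper} over $g$. The only slip is cosmetic: in your computation of $qP(s)/g$ the middle expression has an extra factor $g$ in the denominator (it should read $y'(gs')^r(m+ngs')/(x^r(m+ngx'))$), but your final value $y'Q(s')/Q(x')$ is correct.
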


\begin{proof}
For \(h\geq1\), let
\[
\mathcal D_h(X)
:=
\left\{
(x,y)\in B^{\mathrm{bad}}(Y_P)\cap[1,X]^2:
\gcd(x,y)=h
\right\}.
\]
Take \((x,y)\in\mathcal D_h(X)\), and let
$
x=hx_0, y=hy_0, (x_0,y_0)=1.
$
Since \((x,y)\) is not globally visible, it has an earlier integral
point \((s,z)\) on the same branch.  Its \(p\)-adic visibility for
every prime \(p\) gives
\[
v_p(s,z)\geq v_p(x,y)=v_p(h),
\]
and hence \(h\mid s\) and \(h\mid z\). We write $
s=hs_0, z=hz_0.
$
Since \(s_0<x_0\), and the branch parameters corresponding to the two points are same, we obtain the identity
\[
\frac{y_0}{x_0^r(m+nhx_0)}
=
\frac{z_0}{s_0^r(m+nhs_0)}.
\]
Thus \((x_0,y_0)\) is a primitive globally invisible point for the
polynomial $
P_h(T):=T^r(m+nhT).$
Define
\[
g_h:=(m,h),\qquad
M_h:=\frac{m}{g_h},\qquad
N_h:=\frac{nh}{g_h}.
\]
Then $
(M_h,N_h)=1, M_h\mid m,
$
and
\[
P_h(T)=g_hQ_{M_h,N_h}(T),
\qquad
Q_{M_h,N_h}(T):=T^r(M_h+N_hT).
\]
Observe that multiplication of the polynomial by a positive constant does
not change the associated visibility family, and so, \((x_0,y_0)\) is
counted by
$
V_{M_h,N_h}\left(\frac{X}{h}\right).
$
Moreover, for fixed \(h\), the map
$
(x,y)\longmapsto (x/h,y/h)
$
is injective. Consequently,
\begin{equation}\label{eq:defect-primitive-reduction}
D_P(X)
=
\sum_{h\leq X}\#\mathcal D_h(X)
\leq
\sum_{h\leq X}
V_{M_h,N_h}\left(\frac{X}{h}\right).
\end{equation}
By Proposition \ref{prop:primitive-defect-upper} and the estimate \(N_h\leq nh\),we obtain
\[
\begin{aligned}
D_P(X)
&\ll_{r,m,\varepsilon}
\sum_{h\leq X}
N_h^\varepsilon
\left(\frac{X}{h}\right)^{7/4+\varepsilon} \\
&\ll_{P,\varepsilon}
X^{7/4+\varepsilon}
\sum_{h\leq X}h^{-7/4}
\ll_{P,\varepsilon}
X^{7/4+\varepsilon}.
\end{aligned}
\]
When \(r=2\), the corresponding estimate in Proposition \ref{prop:primitive-defect-upper} gives
\[
D_P(X)
\ll_{P,\varepsilon}
X^{5/3+\varepsilon}
\sum_{h\leq X}h^{-5/3}
\ll_{P,\varepsilon}
X^{5/3+\varepsilon}.
\]
\end{proof}
\begin{proof}[Proof of Theorem~\ref{thm:intro-quantitative-defect}]
 The general lower bound for non-monomial \(P\) follows from
Proposition~\ref{prop:general-linear-defect-lower}.  For
\(P(T)=T^r(m+nT)\), Proposition
\ref{prop:linear-cofactor-log-lower} gives
\(D_P(X)\gg_PX\log X\), while
Proposition~\ref{prop:defect-upper-linear-cofactor} gives
\(D_P(X)\ll_{P,\varepsilon}X^{7/4+\varepsilon}\).  In the cubic case
\(r=2\), the latter proposition gives the improved upper bound
\(D_P(X)\ll_{P,\varepsilon}X^{5/3+\varepsilon}\).
\end{proof}
\section{Mutual visibility}
\label{sec:random-pairs}

In this section we record a two-point version of the visibility problems
studied in this article.  If \(A,B\in\Z^n\), visibility of \(B\) from \(A\)
is measured by the displacement vector $h=B-A$. Thus a visibility datum based at the origin can be translated to any lattice
point \(A\), and the question becomes whether \(B-A\) is visible from the
origin.

\subsection{Translated visibility data}

Let \(\cY=\{(U_\alpha,\phi_\alpha)\}_{\alpha\in I}\) be a visibility datum in
\(\R^n\).  In this section we assume that \(\cY\) covers the ambient lattice:
\[
        \Z^n\setminus\{0\}
        \subset
        \bigsqcup_{\alpha\in I}\phi_\alpha(U_\alpha^+).
\]
Thus every nonzero lattice vector lies on a unique branch.  For \(A\in\Z^n\),
define the translate of \(\cY\) by \(A\) to be
\[
        A+\cY:=\{(U_\alpha,A+\phi_\alpha)\}_{\alpha\in I},
\]
where \[(A+\phi_\alpha)(t)=A+\phi_\alpha(t).\]
For \(A,B\in\Z^n\), with \(A\neq B\), we say that \(B\) is visible from \(A\)
with respect to \(\cY\) if
\[
        B-A\in\cB^{\operatorname{vis}}(\cY).
\]
Similarly, for a prime \(p\), we say that \(B\) is \(p\)-adically visible from
\(A\) if
\[
        B-A\in\cB_p^{\operatorname{vis}}(\cY).
\]
Equivalently, on \(A+\cY\) one uses the relative valuation
\[
        v_{p,A}(P):=\min_i v_p(P_i-A_i).
\]
Let
\[
        \mathcal V(\cY)
        :=
        \{(A,B)\in\Z^n\times\Z^n:A\neq B,\ B-A\in
        \cB^{\operatorname{vis}}(\cY)\},
\]
and
\[
        \mathcal V_p(\cY)
        :=
        \{(A,B)\in\Z^n\times\Z^n:A\neq B,\ B-A\in
        \cB_p^{\operatorname{vis}}(\cY)\}.
\]
Then $
        \mathcal V(\cY)=\bigcap_p\mathcal V_p(\cY) $
if and only if
$
        \cB^{\operatorname{vis}}(\cY)
        =
        \bigcap_p\cB_p^{\operatorname{vis}}(\cY).
$
Thus local detectability is unchanged by translation. 
\par Set $\Omega_X:=\{1,\dots,\lfloor X\rfloor\}^n$ and define the pair density by
\[
        \mathfrak d_{\operatorname{pair}}(\cY)
        :=
        \lim_{X\to\infty}
        \frac{\#\bigl(\mathcal V(\cY)\cap\Omega_X^2\bigr)}
             {|\Omega_X|^2},
\]
provided the limit exists.  Likewise,
\[
        \mathfrak d_{\operatorname{pair},p}(\cY)
        :=
        \lim_{X\to\infty}
        \frac{\#\bigl(\mathcal V_p(\cY)\cap\Omega_X^2\bigr)}
             {|\Omega_X|^2}.
\]

\subsection{The signed weighted homogeneous datum}

\par In this subsection, we fix $w_1,\dots,w_n\in\Z_{\geq1}$ such that $\gcd(w_1,\dots,w_n)=1$ and set $W:=w_1+\cdots+w_n$. Set $\Sigma:=\{-1,0,+1\}^n\setminus\{\mathbf{0}\}$. For \(\sigma=(\sigma_1,\dots,\sigma_n)\in\Sigma\), consider the set of vectors $\alpha=(\alpha_1,\dots,\alpha_n)\in\Q_{\geq0}^n$ satisfying
\[
        \alpha_i=0\Longleftrightarrow \sigma_i=0.
\]
Two such pairs \((\sigma,\alpha)\) and \((\sigma,\beta)\) are declared
equivalent if there exists \(c\in\R_{>0}\) such that
\[
        \beta_i=c^{w_i}\alpha_i
        \qquad
        \text{for every }i.
\]We let \(I\) be the set of equivalence
classes and choose one rational representative \((\sigma,\alpha)\) for each
class. Let $U_{\sigma,\alpha}:=\mathbb{R}$ and set \[\phi_{\sigma,\alpha}(t)
        =
        \bigl(
        \sigma_1\alpha_1t^{w_1},\dots,
        \sigma_n\alpha_nt^{w_n}
        \bigr).
\]
We denote the resulting signed weighted datum by \(\mathscr{Z}_w\).

\begin{lemma}\label{lem:signed-weighted-divisibility}
Let \(h=(h_1,\dots,h_n)\in\Z^n\) be such that \(h_i\neq 0\) for every \(i\).
Then \(h\) is not visible from the origin with respect to
\(\mathscr Z_w\) if and only if there exists an integer \(m\geq2\) such that
\[
        m^{w_i}\mid h_i
        \qquad
        \text{for every }1\leq i\leq n.
\]
\end{lemma}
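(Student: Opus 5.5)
The plan is to follow the proof of Theorem~\ref{thm:full-lattice-density}, transported to the signed branches. First I locate the branch through \(h\). Put \(\sigma_i=\operatorname{sgn}(h_i)\in\{\pm1\}\), so every \(\sigma_i\) is nonzero, and set \(\alpha'=(|h_1|,\dots,|h_n|)\). The class of \((\sigma,\alpha')\) has a chosen rational representative \((\sigma,\alpha)\). Hence there is \(c>0\) with \(|h_i|=c^{w_i}\alpha_i\) for all \(i\), and this gives \(h=\phi_{\sigma,\alpha}(c)\). The parameter \(t\) ranges over \(\R_{>0}\), every \(w_i\ge1\), and every \(\alpha_i\neq0\), so the map \(t\mapsto\phi_{\sigma,\alpha}(t)\) is injective on \(\R_{>0}\). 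Therefore \(c\) is the unique positive parameter of \(h\), and the earlier points on this branch are exactly \(\phi_{\sigma,\alpha}(s)\) with \(0<s<c\).

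For the direction ``\(\Leftarrow\)'', suppose \(m\ge2\) and \(m^{w_i}\mid h_i\) for every \(i\). Take \(s=c/m\). Then \(\phi_{\sigma,\alpha}(c/m)=(h_1/m^{w_1},\dots,h_n/m^{w_n})\), which is an integral point occurring earlier on the branch. So \(h\) is not visible.

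For the direction ``\(\Rightarrow\)'', suppose \(Q=\phi_{\sigma,\alpha}(s)\in\Z^n\) with \(0<s<c\). Put \(d=c/s>1\), so that \(h_i=d^{w_i}Q_i\) for every \(i\). Each \(Q_i\) is nonzero because \(h_i\neq0\), so \(d^{w_i}=h_i/Q_i\in\Q_{>0}\). Since \(\gcd(w_1,\dots,w_n)=1\), I choose integers \(c_i\) with \(\sum c_iw_i=1\). Then \(d=\prod_i(d^{w_i})^{c_i}\in\Q_{>0}\), exactly as in the proof of Theorem~\ref{thm:local-detectability}.

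Next I write \(d=a/b\) in lowest terms, with \(a>b\ge1\). The relation \(b^{w_i}h_i=a^{w_i}Q_i\) together with \(\gcd(a,b)=1\) gives \(a^{w_i}\mid h_i\) for every \(i\). Since \(a>b\ge1\), we have \(a\ge2\), so \(m=a\) works.

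The main obstacle is the bookkeeping about branches rather than any arithmetic. I need to check that \(\mathscr Z_w\) really places \(h\) on a single branch. I also need to check that a change of representative \(\beta_i=c^{w_i}\alpha_i\) only reparametrizes that branch by \(t\mapsto ct\), which preserves order. The hypothesis \(h_i\neq0\) for all \(i\) is what guarantees that every \(Q_i\) is nonzero. That in turn lets the full Bézout combination over all \(i\) be used, so the gcd condition applies directly without passing to a sub-gcd over a support set \(J\).
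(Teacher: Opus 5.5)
Your proof is correct and follows the route the paper indicates; the paper's proof says only that the argument is similar to the proof of Theorem~\ref{thm:local-detectability}. In both, an earlier integral point gives $h_i=d^{w_i}Q_i$ with every $Q_i\neq0$, B\'ezout with $\gcd(w_1,\dots,w_n)=1$ forces $d\in\Q_{>0}$, and the numerator of $d$ produces the required $m$. The paper's template would take a prime $p$ dividing that numerator instead of the whole numerator, which is an immaterial difference. You also spell out details the paper leaves implicit: locating the unique branch through $h$, and the converse direction via the earlier parameter $s=c/m$.
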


\begin{proof}
The proof is similar to the proof of Theorem \ref{thm:local-detectability}.
\end{proof}

\begin{theorem}
\label{thm:signed-weighted-pair-visibility}
Let \(\mathscr{Z}_w\) be the signed weighted datum above.  Then
\[
        \mathcal V(\mathscr{Z}_w)
        =
        \bigcap_p\mathcal V_p(\mathscr{Z}_w).
\]
Moreover,
\[
        \mathfrak d_{\operatorname{pair},p}(\mathscr{Z}_w)
        =
        1-\frac1{p^W}
\]
for every prime \(p\), and
\[
        \mathfrak d_{\operatorname{pair}}(\mathscr{Z}_w)
        =
        \prod_p\left(1-\frac1{p^W}\right)
        =
        \frac1{\zeta(W)}.
\]
\end{theorem}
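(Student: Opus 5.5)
The plan has three steps: reduce the local-global equality to a statement about single displacement vectors, identify the local condition at each prime as a congruence condition on the displacement, and then count pairs in the box and apply the Euler-product machinery.

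\textbf{Local-global equality.} By the translation remark preceding the theorem, the equality $\mathcal V(\mathscr Z_w)=\bigcap_p\mathcal V_p(\mathscr Z_w)$ is equivalent to local detectability for $\mathscr Z_w$ itself. The branches
\[
\phi_{\sigma,\alpha}(t)=(\sigma_1\alpha_1t^{w_1},\dots,\sigma_n\alpha_nt^{w_n})
\]
satisfy $\phi(dt)=\rho(d)\phi(t)$ for $d>0$. The proof of Theorem~\ref{thm:local-detectability} used only this homogeneity and the fact that $d^{w_i}=P_i/Q_i$ is a positive rational for $i\in J$. Here $P_i/Q_i$ is still positive, because both coordinates carry the same sign $\sigma_i$. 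So the same argument applies verbatim, including to vectors with some zero coordinates.

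\textbf{Local condition.} Next I identify the $p$-adic condition. I claim that a displacement $h\neq0$ fails to be $p$-adically visible iff $p^{w_i}\mid h_i$ for every $i$. The backward direction uses $\phi(t/p)$. For the forward direction I follow the proof of Theorem~\ref{thm:full-lattice-density}:
\begin{itemize}
\item from $h_i=d^{w_i}Q_i$ we get $d^{w_i}\in\Q$ for $i\in J$;
\item for coordinates outside $J$ the divisibility $p^{w_i}\mid 0$ is automatic;
\item setting $h_J:=\gcd\{w_i:i\in J\}$ gives $d^{h_J}\in\Q$.
\end{itemize}
This is the subtle point. When $h_J>1$, $d$ itself need not be rational, so the valuation argument has to be run with $D=d^{h_J}$, exactly as in the proof of Theorem~\ref{thm:local-detectability}. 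That gives $v_p(D)\geq1$, hence $v_p(h_i)\ge v_p(Q_i)+w_i/h_J$. This is weaker than $p^{w_i}\mid h_i$.

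However, I will only need the exact characterization on the set where all $h_i\neq0$. There $J=\{1,\dots,n\}$ and $\gcd(w)=1$ force $d\in\Q$, matching Lemma~\ref{lem:signed-weighted-divisibility}. Pairs with some $A_i=B_i$ number $O(X^{2n-1})=o(|\Omega_X|^2)$, so they are negligible for every density computed below.

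\textbf{Counting.} The map $(A,B)\mapsto B-A$ sends $\Omega_X^2$ onto displacements in $[-N,N]^n$, with fiber size $\prod_i(N-|h_i|)$. The condition $p^{w_i}\mid B_i-A_i$ for all $i$ is a congruence condition modulo $p^{\max w_i}$ on the pair $(A,B)$. For each fixed $A_i$, the proportion of $B_i$ satisfying it is $p^{-w_i}+O(p^{w_i}/N)$. This gives
\[
\mathfrak d_{\operatorname{pair},p}=1-p^{-W}.
\]
For finitely many primes, the Chinese remainder theorem gives the product of the local densities, as in Lemma~\ref{lem:finite-local-density}.

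For the large-prime tail, the union bound gives
\[
\#\{(A,B)\in\Omega_X^2: p^{w_i}\mid B_i-A_i\ \forall i\}\le N^n\prod_i\bigl(N/p^{w_i}+1\bigr).
\]
The ``$+1$'' terms are where care is needed. Any tail prime contributing a violation satisfies $p^{w_i}\le |h_i|\le N$ for every $i$ when all $h_i\neq0$, so $p\le N$. Expanding the product, the terms carrying at least one ``$+1$'' are bounded by sums of the form $\sum_{Y<p\le N}N^{2n-1}p^{-w_{i}}\cdots$. I expect this to be the main obstacle, specifically when some $w_i=1$: then a term like $\sum_{p\le N}N^{2n-1}$ with only one $N/p^{w_j}$ factor retained is too crude.

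To handle it, I will instead bound fiberwise. For each $A$, count the $B$ with $h\neq0$ componentwise and $\prod_i p^{w_i}\mid$-conditions, using
\[
\#\{h_i\neq0,\ |h_i|\le N,\ p^{w_i}\mid h_i\}\le 2N/p^{w_i}.
\]
Excluding $h_i=0$ removes the ``$+1$'' entirely. This yields a tail bounded by $2^nN^{2n}\sum_{p>Y}p^{-W}$. Since $W\ge2$ whenever $n\ge2$, this tends to zero relative to $N^{2n}$ as $Y\to\infty$.

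\textbf{Conclusion.} With the local densities, the finite-prime products, and the tail estimate (iii) in hand, the argument of Theorem~\ref{thm:density} transfers to pairs, with the $o(N^{2n})$ degenerate set ignored. This gives
\[
\mathfrak d_{\operatorname{pair}}=\prod_p(1-p^{-W})=1/\zeta(W).
\]
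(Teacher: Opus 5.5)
Your argument is correct, but for the global density you take a different route from the paper.

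\textbf{Your route.} You follow the template of the proof of Theorem~\ref{thm:full-lattice-density}: local densities, the Chinese remainder theorem for finitely many primes, and a large-prime tail, all fed into Theorem~\ref{thm:density}.

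\textbf{The paper's route.} The paper proves this theorem by M\"obius inversion instead. After discarding the $O(N^{2n-1})$ pairs with some $A_i=B_i$, it writes the visibility indicator as $\sum_{m:\,m^{w_i}\mid h_i\,\forall i}\mu(m)$, using Lemma~\ref{lem:signed-weighted-divisibility}. It then shows $N_m^\ast(X)=N^{2n}m^{-W}+O_m(N^{2n-1})$ together with the uniform bound $N_m^\ast(X)\ll N^{2n}m^{-W}$. This rests on the same count you use: at most $2N/q$ nonzero multiples of $q$. Dominated convergence then gives $\sum_m\mu(m)m^{-W}=1/\zeta(W)$ termwise. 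The local factor $1-p^{-W}$ is computed separately and only afterwards compared with the Euler product.

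\textbf{What each buys.} The paper needs the integer-level criterion (some $m\ge2$ with $m^{w_i}\mid h_i$) and a bound uniform in all $m$, but no sieve machinery for pairs. Your route needs only the prime-level criterion and a tail over primes $p>Y$. It also uses local detectability as a genuine input, so ``global density $=$ product of local densities'' becomes a structural consequence rather than a numerical coincidence. The cost is transporting Lemma~\ref{lem:finite-local-density} and Theorem~\ref{thm:density} from $\Z_{\geq1}^n$ to pairs, which is routine.

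\textbf{The degenerate locus.} Your care here is warranted and slightly sharper than the paper. The paper's final paragraph states the local obstruction as $p^{w_i}\mid B_i-A_i$ for all $i$ without repeating the restriction $h_i\neq0$. For $w=(1,2)$, however, the displacement $(0,2)$ follows $(0,1)$ on the branch $t\mapsto(0,t^2)$. So it fails $2$-adic visibility although $4\nmid 2$. This locus has pair density zero, so neither argument is affected.

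\textbf{A minor point.} Your worry about the ``$+1$'' terms was overstated. Once you restrict to $p\le N$ and use $n\ge2$, every cross term is $\ll N^{2n-1}\log\log N$ or $\ll N^{n+1}$, hence $o(N^{2n})$. Your fiberwise count is simply cleaner.
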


\begin{proof}
Since \(\mathscr Z_w\) is homogeneous for the positive-weight action and
visibility of \(B\) from \(A\) is equivalent, after translation, to
visibility of \(B-A\) from the origin, Theorem
\ref{thm:local-detectability} gives
\[
        \mathcal V(\mathscr Z_w)
        =
        \bigcap_p\mathcal V_p(\mathscr Z_w).
\]
It remains to compute the corresponding local and global densities.

Write \(N=\lfloor X\rfloor\) and
\(\Omega_X=\{1,\dots,N\}^n\), so that
\(|\Omega_X|^2=N^{2n}\).  For \((A,B)\in\Omega_X^2\), put
\(h=B-A=(h_1,\dots,h_n)\).  We first discard the pairs for which
\(h_i=0\) for at least one \(i\).  For each fixed \(i\), the condition
\(A_i=B_i\) gives \(N^{2n-1}\) pairs, and hence the union of these
coordinate hyperplanes contains \(O(N^{2n-1})\) pairs.  It therefore has
pair density zero. Therefore, we restrict to displacement vectors \(h\) such that \(h_i\neq0\) for all $i$, and apply
Lemma~\ref{lem:signed-weighted-divisibility}.

For \(m\geq1\), let
\[
        N_m^\ast(X)
        :=
        \#\left\{
        (A,B)\in\Omega_X^2:
        h_i\neq0\ \forall i,\quad
        m^{w_i}\mid h_i\ \forall i
        \right\}.
\]
For fixed \(q\), the number of pairs
\((a,b)\in\{1,\dots,N\}^2\) satisfying \(a\equiv b\pmod q\) is
\(N^2/q+O_q(N)\).  Applying this in the \(i\)-th coordinate with
\(q=m^{w_i}\), and then removing the zero-coordinate locus, gives, for
each fixed \(m\),
\begin{equation}\label{eq:weighted-Nm-asymptotic}
        N_m^\ast(X)
        =
        \frac{N^{2n}}{m^W}
        +O_m(N^{2n-1}).
\end{equation}
In particular,
\begin{equation}\label{eq:weighted-Nm-limit}
        \lim_{X\to\infty}
        \frac{N_m^\ast(X)}{N^{2n}}
        =
        \frac1{m^W}.
\end{equation}

We now impose visibility by Möbius inversion.  For a displacement vector
\(h\) with all coordinates nonzero, set
\[
        D_w(h)
        :=
        \prod_p
        p^{\,\min_i\lfloor v_p(|h_i|)/w_i\rfloor}.
\]
Then \(m\mid D_w(h)\) if and only if
\(m^{w_i}\mid h_i\) for every \(i\).  By
Lemma~\ref{lem:signed-weighted-divisibility}, \(h\) is visible precisely
when \(D_w(h)=1\).  Thus the standard identity
\(\sum_{m\mid d}\mu(m)=1\) if \(d=1\), and \(0\) otherwise, gives
\[
        \mathbf 1_{\{B\text{ visible from }A\}}
        =
        \sum_{\substack{m\geq1\\
        m^{w_i}\mid B_i-A_i\ \forall i}}
        \mu(m).
\]
Summing over pairs outside the zero-coordinate locus, we obtain
\begin{equation}\label{eq:weighted-visible-mobius}
        \#\bigl(
        \mathcal V(\mathscr Z_w)\cap\Omega_X^2
        \bigr)
        =
        \sum_{m\geq1}\mu(m)N_m^\ast(X)
        +O(N^{2n-1}).
\end{equation}

To justify the passage to the limit, we need a uniform bound in \(m\).
If \(m^{w_i}\mid h_i\) and \(0<|h_i|<N\), then there are at most
\(2N/m^{w_i}\) possibilities for \(h_i\). Hence, the number of possible
displacement vectors is at most \(2^nN^n/m^W\).  Each fixed displacement
vector occurs for at most \(N^n\) ordered pairs, and therefore
\begin{equation}\label{eq:weighted-Nm-uniform}
        N_m^\ast(X)
        \ll
        \frac{N^{2n}}{m^W},
\end{equation}
with an implied constant depending only on \(n\).  Since \(W>1\),
the series \(\sum_{m\geq1}m^{-W}\) converges.  Equations
\eqref{eq:weighted-Nm-limit} and \eqref{eq:weighted-Nm-uniform}
therefore allow us to pass the limit through the sum in
\eqref{eq:weighted-visible-mobius}.  We obtain
\[
\begin{split}
        \mathfrak d_{\operatorname{pair}}(\mathscr Z_w)
        &=
        \sum_{m\geq1}\frac{\mu(m)}{m^W}
        =
        \frac1{\zeta(W)}.
\end{split}
\]

It remains to identify the local factor.  Fix a prime \(p\).  By the prime
formulation of Lemma~\ref{lem:signed-weighted-divisibility}, the local
obstruction is precisely that
\(p^{w_i}\mid B_i-A_i\) for every \(i\).  In one coordinate, the
congruence \(B_i\equiv A_i\pmod{p^{w_i}}\) has density \(p^{-w_i}\)
among ordered pairs.  Since the coordinates are independent, the
simultaneous obstruction has density
\[
        \prod_{i=1}^n p^{-w_i}
        =
        p^{-W}.
\]
Consequently,
\[
        \mathfrak d_{\operatorname{pair},p}(\mathscr Z_w)
        =
        1-\frac1{p^W}.
\]
Taking the product over all primes, we obtain
\[
        \prod_p
        \mathfrak d_{\operatorname{pair},p}(\mathscr Z_w)
        =
        \prod_p\left(1-\frac1{p^W}\right)
        =
        \frac1{\zeta(W)}
        =
        \mathfrak d_{\operatorname{pair}}(\mathscr Z_w),
\]
which completes the proof.
\end{proof}

\subsection{Examples}

We first recover the classical visibility between two lattice points.  Take all
weights equal to \(1\).  The corresponding signed weighted datum is the
rational straight-line datum, whose branches may be written as
\(\phi_{\sigma,\alpha}(t)=(\sigma_1\alpha_1t,\dots,
\sigma_n\alpha_nt)\), where each \(\sigma_i\) belongs to
\(\{-1,0,+1\}\), each \(\alpha_i\) is a nonnegative rational number, and
\(\alpha_i=0\) precisely when \(\sigma_i=0\).  For distinct lattice points
\(A\) and \(B\), visibility of \(B\) from \(A\) is equivalent to the
primitivity of the displacement vector \(B-A\); that is,
\(\gcd(B_1-A_1,\dots,B_n-A_n)=1\).  At a prime \(p\), the local obstruction
is simply that \(A\) and \(B\) are congruent coordinatewise modulo \(p\).
Thus Theorem~\ref{thm:signed-weighted-pair-visibility} gives the classical
density \(1/\zeta(n)\) for visible ordered pairs.

We next consider signed generalized lines of sight.  Here \(n=2\) and the
weights are \(1\) and \(b\).  The branches may be written as
\(\phi_{\sigma,\tau,\alpha}(t)
=(\sigma\alpha_1t,\tau\alpha_2t^b)\), where
\(\sigma,\tau\in\{-1,0,+1\}\), the parameters
\(\alpha_1,\alpha_2\) are nonnegative rationals, and a parameter is zero
precisely when the corresponding sign is zero.  If
\(B-A=(u,v)\) has both coordinates nonzero, then \(B\) is visible from
\(A\) precisely when there is no prime \(p\) for which \(p\) divides \(u\)
and \(p^b\) divides \(v\).  The local obstruction at \(p\) therefore has
density \(p^{-(b+1)}\), and the density of visible ordered pairs is
\(1/\zeta(b+1)\).  When \(b=1\), this specializes to the classical planar
density \(1/\zeta(2)\).

\bibliographystyle{alpha}
\bibliography{references}
\end{document}